\numberwithin{equation}{section}
\newtheorem{definition}{Definition}[section]
\newtheorem{theorem}[definition]{Theorem}
\newtheorem{lemma}[definition]{Lemma}
\newtheorem{proposition}[definition]{Proposition}
\newtheorem{remark}[definition]{Remark}
\begin{document}

\setstretch{1.1}

\title[Decomposition of one-layer neural networks via the infinite sum of RKBS]{Decomposition of one-layer neural networks via the infinite sum of reproducing kernel Banach spaces}

\author{Seungcheol Shin \and Myungjoo Kang }
\address{}
\curraddr{}
\email{}
\pagenumbering{arabic}

\begin{abstract}
In this paper, we define the sum of RKBSs using the characterization theorem of RKBSs and show that the sum of RKBSs is compatible with the direct sum of feature spaces. Moreover, we decompose the integral RKBS into the sum of $p$-norm RKBSs. Finally, we provide applications for the structural understanding of the integral RKBS class.
\end{abstract}
\maketitle

\section{Introduction}
To analyze the performance of neural networks, the hypothesis space represented by (infinite width) neural networks has been studied. Based on the concept of variation spaces  \citep{kurkova2001bounds,mhaskar2004tractability}, \citet{bach2017breaking} defined the $\mathcal{F}_{1}$ spaces as an integral representation of the neural networks using the total variation norm. In subsequent, \citet{weinan2022representation} defined the Barron spaces employing the path norm and showed that the $\mathcal{F}_{1}$ spaces and the Barron spaces can be isometrically isomorphic when using the Rectified Linear Unit (RELU) activation function.

The concept of Reproducing Kernel Banach Spaces (RKBSs) is a generalization of the Reproducing Kernel Hilbert Spaces (RKHSs), similar to how Banach spaces extend Hilbert spaces \citep{zhang2009reproducing}. Relating to neural networks, \citet{bartolucci2023understanding} defined a class of integral RKBSs which are variants of the $\mathcal{F}_{1}$ spaces. They defined a class of integral RKBSs through the characterization theorem of the RKBS introduced by \cite{combettes2018regularized} which describe an RKBS using a feature space and its associated feature map.

In this study, our primary focus is on a class of integral RKBSs. We aim to decompose this function space and identify its fundamental building blocks. Decomposing a function space entails preserving both its algebraic operations and topological properties. Since we are dealing with RKBS, we additionally need to ensure that the decomposition preserves the property that evaluation functionals remain continuous (see Definition \ref{def. RKBS}). Considering the case of RKHS, there exists a sum of RKHSs that naturally extends the space in a canonical manner, resulting in an RKHS \citep{aronszajn1950theory}. Using this approach, we aim to define the (potentially infinite) sum of RKBSs and investigate its relationship with the feature spaces.

The main questions of this paper are the following:
\begin{itemize}
    \item[(1)] Finding a natural definition for the sum of RKBSs that is compatible with the usual direct sum of Banach spaces.
    \item[(2)] How can we decompose a class of integral RKBSs using the sum defined in question (1)?
\end{itemize}

To answer these questions, we define the sum of RKBSs, see Proposition \ref{prop. infinite sum of RKBSs}, and show that the direct sum of the feature (Banach) spaces is compatible with the sum of RKBSs (Proposition \ref{prop. compatibility}). Roughly speaking, it is well-known that the space of the Radon measures can be decompose as the vast $l^1$ direct sum of $L^{1}$ spaces. As an analogue of the fact described above, we decompose the class of integral RKBSs using the sum of p-norm RKBSs (Theorem \ref{thm. decomposition of the integral RKBS}).

In general, when hypothesis spaces satisfy the inclusion $\mathcal{F}_{1} \subset \mathcal{F}_{2}$
(e.g., RKHS), it is expected that $\mathcal{F}_{2}$
exhibits better approximation performance, while 
$\mathcal{F}_{1}$ provides better generalization performance. Therefore, comparing the inclusion relationships among hypothesis spaces plays a crucial role in evaluating model performance. As previously mentioned, taking the sum of hypothesis spaces—particularly RKHSs—serves as a means of expanding the function space, thereby enabling such comparisons. However, to the best of our knowledge, the theoretical foundation for the sum of infinitely many hypothesis spaces remains underdeveloped. This work aims to fill this gap by providing a theoretical basis for such infinite-sum hypothesis spaces.

\subsection{Related Work}
Before the era of neural networks, one of the main topics in machine learning was the kernel method, exemplified by concepts such as Reproducing Kernel Hilbert Spaces (RKHSs) and Support Vector Machines \citep{aronszajn1950theory,steinwart2008support,berlinet2011reproducing}.
Machine learning models such as SVMs, which adopt an RKHS as their hypothesis space, are supported by both existence theorems and the Representer Theorem for general L-risk minimization. In particular, when the empirical L-risk is considered, the Representer Theorem guarantees the form of the solution, and algorithms for explicitly computing such solutions have been well established \citep{smola1998learning,shalev2014understanding}. These characteristics substantially bridge the gap between theoretical understanding and practical application.
A standard approach to extending RKHSs is through their sum. For instance, methods such as the multiple kernel algorithm demonstrate that such extensions are effective in capturing diverse aspects of data \citep{yamanishi2004protein,gonen2011multiple}.

However, RKHS-based learning algorithms exhibit certain limitations due to their inner product structure. To address these challenges, the concept of Reproducing Kernel Banach Spaces (RKBSs) was introduced. Numerous studies have explored its theoretical foundations and applications \citep{zhang2009reproducing,song2013reproducing,fasshauer2015solving,lin2022reproducing}. Meanwhile, early theoretical research on neural networks primarily focused on approximation properties \citep{cybenko1989approximation,hornik1990universal,barron1993universal}. This line of inquiry led to further investigations into the hypothesis spaces of infinitely wide neural networks, culminating in the introduction of concepts such as Barron spaces and variation spaces \citep{bach2017breaking,ma2022barron,weinan2022representation,siegel2023characterization}. Recent studies have attempted to analyze the hypothesis spaces of neural networks within the RKBS framework. For this purpose, the concept of integral RKBS has been introduced, which allows the Representer Theorem to be established for the empirical L-risk in one-layer neural networks \citep{bartolucci2023understanding}.

Various results have been established for representer theorems and regularized optimization problems, particularly in the context of optimizing functionals such as the empirical L-risk, which can be embedded in finite-dimensional spaces. The core idea behind these results involves the appropriate use of the Krein–Milman theorem and Carathéodory’s theorem. In some approaches, duality maps are employed instead of assuming Gâteaux differentiability \citep{boyer2019representer,bredies2020sparsity,unser2021unifying,unser2022convex}. For more general L-risk functionals, methods have been developed that characterize the solution by analyzing the subdifferential of the risk and showing that the norm on the hypothesis space—up to reflexive Banach spaces—is Gâteaux differentiable \citep{steinwart2008support,combettes2018regularized}. However, rather than proving a representer theorem directly, this paper focuses on how the known form of the solution—derived from existing representer theorems—can be reformulated through our infinite-sum decomposition approach. Additionally, for the completeness of the paper, we also establish a general existence result for one-layer neural networks under general L-risk.

\subsection{Organization}
This paper is organized as follows. In Section 2, we briefly review the definitions and basic facts of the functional analysis. In Section 3, following \citet{bartolucci2023understanding,spek2022duality}, we introduce the definition of RKBSs and related function subclasses, namely a class of integral RKBSs and a class of p-norm RKBSs. We present some basic properties of these function classes, particularly focusing on the comparison between integral RKBSs and spaces of continuous functions (Proposition \ref{prop. A is compact operator}). 
Moreover, we define the sum of RKBSs, which is a modified version of Example 3.13 in \citet{combettes2018regularized} and the theorem in 353p of \citet{aronszajn1950theory}, by using the characterization theorem of an RKBS. 
In Section 4, we state the main result of this article. We provide the compatibility between the sum of RKBSs and the direct sum of feature (Banach) spaces. Furthermore, using the compatibility (Proposition \ref{prop. compatibility}), we obtain that a class of integral RKBSs can be decomposed into the sum of p-norm RKBSs (Theorem \ref{thm. decomposition of the integral RKBS}). In Section 5, we present applications of Theorem \ref{thm. decomposition of the integral RKBS}, specifically illustrating how the size of the RKBS $\mathcal{F}_{\sigma}(\mathcal{X},\Omega)$ compares to the finite sum of p-norm RKHSs. We also discuss the existence of general solutions for one-layer neural networks (Theorem \ref{thm. general solution one-layer nn}), as well as a scheme for reformulating optimization problems in the feature space into ones in the hypothesis space (Proposition \ref{prop. reformulation of optimization problem} and Proposition \ref{prop. reformulate optimization problem of direct sum space}).

\section{Preliminaries and notations}
In this paper, we denote $I$ as a non-empty index set and the set $\{1,\dots,n\}$ is denoted by $[n]$. We consistently use $p$ and $q$ as conjugate indices, where $p$ satisfies  $1 \le p < \infty$. The data space is represented as $\mathcal{X}$, and the parameter space as $\Omega$. For convenience, we assume that $\mathcal{X}$ and $\Omega$ are compact subsets of $\mathbb{R}^{d}$ and $\mathbb{R}^{D}$ for some $d,D \in \mathbb{N}$, respectively.
We use the notation $\cong$ to denote an isomorphic relation between two vector spaces. If there exists an isometric isomorphism between two Banach spaces $B_{1}$ and $B_{2}$, we say that they are isometrically isomorphic and denote this by $B_{1}\underset{\mathcal{B}}{\cong} B_{2}$. Moreover, if $B_{1}$ and $B_{2}$ are identical as sets and the identity map itself is an isometric isomorphism, then they are completely the same. In this case, we use the notation $B_{1} \underset{\mathcal{B}}{\equiv} B_{2}$.

\subsection{Direct sum of normed vector spaces}
Let $\{a_{i}\}_{i\in I}$ be a family of elements in a Hausdorff commutative topological group (HCTG) $H$. 
Define $\mathcal{F}$ as the collection of all finite subsets of $I$, and order $\mathcal{F}$ by inclusion. Then $\mathcal{F}$ becomes a directed set.
For each $F\in\mathcal{F}$, define $a_{F}:=\sum_{i\in F}a_{i}$. Since $F$ is a finite set, $a_{F}$ would be well-defined.
Thus, $(a_{F})_{F\in\mathcal{F}}$ is a net in $H$.
The family $\{a_{i}\}_{i\in I}$ is said to be summable if the net $(a_{F})_{F\in\mathcal{F}}$ converges. In this case, the limit is called the sum of the family $\{a_{i}\}_{i\in I}$, and we denote it by $_{H}{\sum}_{i\in I}a_{i}$. When we consider sums in the norm topology of $\mathbb{R}$, we use the term $\sum_{i\in I}a_{i}$ instead of $_{\mathbb{R}}{\sum}_{i\in I}a_{i}$. The contents related to the summable family in HCTG and $\mathbb{R}$ can be found in \uppercase\expandafter{\romannumeral3} \S5 and \uppercase\expandafter{\romannumeral4} \S7 of \citet{bourbaki1971gentop} respectively.

For a given index set $I\neq \emptyset$, let $\{X_{i}\}_{i\in I}$ be a family of sets indexed by $I$. Then the direct product of the sets in $\{X_{i}\}_{i\in I}$ is defined by $\prod_{i \in I}X_{i} :=\left\{\mathbf{x}:I\rightarrow\bigcup_{i\in I}X_{i}: \mathbf{x}(i) \in X_{i} \text{ for all }i\in I \right\}$. When we assume that $X_{i} \neq \emptyset$ for all $i\in I$, by the axiom of choice, $\prod_{i\in I}X_{i}$ is the non-empty set. In this case, for $j \in I$, we can define $p_{j}: \prod_{i\in I}X_{i} \rightarrow X_{j}$ by $p_{j}(\mathbf{x}) = \mathbf{x}(j)$ for $\mathbf{x} \in \prod_{i \in I}X_{i}$. And we call $p_{j}$ is the $j$-th canonical projection. By abuse of notation, for any $\mathbf{x} \in \prod_{i\in I}X_{i}$, we denote $\mathbf{x}$ by $(x_{i})_{i\in I}$ which means $\mathbf{x}(j) = x_{j} \in X_{j}$ for all $j\in I$.
When $\{X_{i}\}_{i\in I}$ is a collection of $\mathbb{R}$-vector spaces, the direct product of $\{X_{i}\}_{i\in I}$ is the vector space $\prod_{i\in I}X_{i}$ with componentwise addition and scalar multiplication. In this case, the canonical projections are linear maps. Furthermore, if $\{X_{i}\}_{i\in I}$ are topological spaces, then we can define the direct product of $\{X_{i}\}_{i\in I}$ by giving a topology on $\prod_{i\in I}X_{i}$, called the product topology. Under this situation, the canonical projections are continuous maps.

Let $\{X_{i}:i\in I\}$ be a collection of normed vector spaces indexed by $I$. Then we can define the direct sum of the normed vector spaces $\{X_{i}:i\in I\}$ as follows:
\begin{definition}[The direct sum of normed vector spaces \citep{conway1997course}]
For $1\le p < \infty$, we define
\begin{align*}
    \bigoplus_{i\in I}^{p}X_{i}
    :=
    \left\{\mathbf{x}\in \prod_{i\in I}X_{i}
    :
    \left[\sum_{i\in I}\lVert \mathbf{x}(i)\rVert_{X_{i}}^{p}\right]^{\frac{1}{p}}<\infty \right\}
\end{align*}
as a normed vector space equipped with the norm $\lVert \mathbf{x}\rVert_{\bigoplus_{i\in I}^{p}X_{i}}
=
\left[\sum_{i\in I}\lVert \mathbf{x}(i)\rVert_{X_{i}}^{p}\right]^{\frac{1}{p}}$.
For $p = \infty$, we define
\begin{align*}
    \bigoplus_{i\in I}^{\infty}X_{i} 
    :=
    \left\{\mathbf{x}\in \prod_{i\in I}X_{i}
    :
    \sup_{i\in I}\lVert \mathbf{x}(i)\rVert_{X_{i}}<\infty \right\}
\end{align*}
as a normed vector space equipped with the norm
$\lVert \mathbf{x}\rVert_{\bigoplus_{i\in I}^{\infty}X_{i}}
=
\sup_{i\in I}\lVert \mathbf{x}(i)\rVert_{X_{i}}$.
\end{definition}
In particular, if each $X_{i}$ is a Banach space, then the direct sum of $\{X_{i}\}_{i\in I}$ is a Banach space.
Let $p$ and $q$ be conjugate indices with $1\le p < \infty$. We can obtain the following relationship between the duality and the direct sum (see \uppercase\expandafter{\romannumeral3} \S5 Exercise 4 of \citet{conway1997course}):
\begin{align}
    \label{eq. Duality of direct sum}
    \text{ Define the map }\Phi: \bigoplus_{i\in I}^{q}\left(X_{i}^{*}\right)\rightarrow\left(\bigoplus_{i\in I}^{p}X_{i}\right)^{*} \text{ as }\Phi\left((g_{i})_{i\in I}\right) (f_{i})_{i\in I}=\sum_{i\in I}\left<g_{i},f_{i}\right>
\end{align}
for $(g_{i})_{i\in I}\in \bigoplus_{i\in I}^{q}\left(X_{i}^{*}\right)$ and $(f_{i})_{i\in I}\in\bigoplus_{i\in I}^{p}X_{i}$. Then $\Phi$ is well-defined and it is an isometric isomorphism.

\subsection{Review of measure theory}
Let $K$ be a compact metric space. Then we know that the Borel and Baire $\sigma$-algebra over $K$ is coincide and every Borel measure on $K$ is Radon measure (see Proposition 6.3.4 and Theorem 7.1.7 of \citet{bogachev2007measure}).
Let $C(K)$ be the Banach space consisting of continuous real-valued functions defined on $K$, equipped with the supremum norm. We denote by $\mathcal{M}(K)$ the Banach space of (signed) Borel measures defined on $K$, endowed with the total variation norm. Additionally, the set of positive measures in $\mathcal{M}(K)$ is denoted by $\mathcal{M}(K)^{+}$, and the set of probability measures in $\mathcal{M}(K)$ is denoted by $P(K)$.
We know that by the Riesz Representation Theorem, there is an isometric isomorphism
\begin{align}
    \Lambda: \mathcal{M}(K) \rightarrow C(K)^{*} \text{ defined by }\Lambda(\mu)(f) := \int_{K}fd\mu \quad\text{for } \mu \in \mathcal{M}(K) \text{ and } f \in C(K).\label{eq. isometric isom measure space conti space}    
\end{align}

Let a measure space $(K,\Sigma,\mu)$ be given. 
We denote $\mathfrak{L}^{0}(K)$ for the set of all real valued $\Sigma$-measurable functions on $K$, that is, $\mathfrak{L}^{0}(K):=\{f:K\rightarrow \mathbb{R}: f \text{ is }\Sigma\text{-measurable}\}$. For any $1\le p \le \infty$, we define the $p$-norm by
\begin{align*}
    \lVert f\rVert_{p}:=
\begin{cases}
        \left(\int_{K}\lvert{f}\rvert d\mu\right)^{1/p}, 
       & \mbox{if } 1\le p <\infty,
       \\
       \lVert f\rVert_{\infty}= \text{ess}\sup\lvert{f}\rvert, & \mbox{if } p= \infty.
\end{cases}
\end{align*}
Then, the set $\mathfrak{L}^{p}(\mu):=\{f \in \mathfrak{L}^{0}(K): \lVert f\rVert_{p} < \infty\}$ forms a semi-normed vector space of functions. We say $f,f' \in\mathfrak{L}^{p}(\mu)$ are equivalent if and only if $\lVert f-f'\rVert_{p} =0$. Then, the set of equivalence class $L^{p}(\mu):\{[f] :f\in \mathfrak{L}^{p}(\mu)\}$ forms a Banach space with norm $\lVert [f]\rVert_{L^{p}(\mu)}:=\lVert f\rVert_{p}$.

When $p$ and $q$ are conjugate indices with $1 <p <\infty$, there is an isometric isomorphism
\begin{align}
    \Xi: L^{q}(\mu) \rightarrow L^{p}(\mu)^{*} \text{ defined by }\Xi(g)(f) := \int_{K}fgd\mu \quad\text{for }g\in L^{q}(\mu)\text{ and } f \in L^{p}(\mu).
    \label{eq. isometric isom dual of L^p and L^q}
\end{align}
It is also true for the case of $p=1, q=\infty$ if the measure space $(K,\Sigma,\mu)$ is indeed $\sigma$-finite. We use the notation $L^{p}(K,\mu)$ instead of $L^{p}(\mu)$  if there is a need to distinguish the domain space $K$. 

A family $\mathfrak{F}$ of measures in $\mathcal{M}(K)^{+}$ is called singular if $\mu \perp \nu$ whenever $\mu,\nu \in \mathfrak{F}$ and $\mu \neq \nu$ (see Definition 4.2.4 and Definition 4.6.1 of \citet{dales2016banach}).
Let $\mathfrak{S}$ be a nonempty subset of $\mathcal{M}(K)^{+}$.
Then, by Zorn's lemma, there exists a maximal element in the set $\{\mathfrak{A}: \mathfrak{A}\subset \mathfrak{S}, \text{ $\mathfrak{A}$ is a singular family in }\mathcal{M}(K)^{+}\}$. This maximal element is called a maximal singular family in $\mathfrak{S}$. Let $\{\mu_{i}\}_{i\in I}$ be a maximal singular family in $P(K)$. Then, there exists an isometric isomorphism
\begin{align}
    \Theta: \bigoplus_{i\in I}^{1}L^{1}(\mu_{i}) \rightarrow \mathcal{M}(K) \text{ defined by } \Theta\left((f_{i})_{i\in I}\right) = _{\mathcal{M}(K)}{\sum}_{i\in I}\rho_{i}
    \label{eq. M(K) is vast L^1 sum}
\end{align}
for $(f_{i})_{i\in I} \in \bigoplus_{i\in I}^{1}L^{1}(\mu_{i})$, where $\rho_{i}(B) = \int_{B}f_{i}d\mu_{i}$ for all $i\in I$ and Borel set $B$ in $K$.  (see Theorem 4.6.6 of
\citet{dales2016banach} and Proposition 4.3.8 in \citet{albiac2016topics}). We use the notation $\Phi,\Lambda,\Xi$ and $\Theta$ liberally in situations that are isometrically isomorphic, as described above.

\section{Reproducing kernel Banach spaces}
\subsection{Definition of RKBS}
When we consider $\mathbb{R}^{\mathcal{X}} = \prod_{x\in \mathcal{X}}\mathbb{R}_{x}$, where $\mathbb{R}_{x}$ is just a copy of $\mathbb{R}$ for each $x\in \mathcal{X}$, there is a natural topological structure called the product topology.
Equivalently, it is the initial topology with respect to the family of canonical projections $\{p_{x}:\mathbb{R}^{\mathcal{X}}\rightarrow \mathbb{R}_{x}\}_{x\in \mathcal{X}}$. Since this topology is compatible with the vector space structure of $\mathbb{R}^{\mathcal{X}}$, $\mathbb{R}^{\mathcal{X}}$ becomes a Hausdorff topological vector space (HTVS). Thus, we may consider a summable family $(a_{i})_{i\in I}$ in $\mathbb{R}^{\mathcal{X}}$ and denote its sum in $\mathbb{R}^{\mathcal{X}}$ by $_{\mathbb{R}^{\mathcal{X}}}{\sum}_{i \in I}a_{i}$ if it exists.

Let $V$ be a linear subspace of $\mathbb{R}^{\mathcal{X}}$. Then a topology on $V$ induced by the product topology of $\mathbb{R}^{\mathcal{X}}$ again gives $V$ the structure of a HTVS.
Additionally, due to the transitivity of the initial topology, the subspace topology on $V$ coincides with the initial topology induced by the family of restrictions $\{p_{x}|_{V}:V \rightarrow \mathbb{R}_{x}\}_{x\in \mathcal{X}}$. We denote such a HTVS as $(V,\{p_{x}|_{V}\}_{x\in \mathcal{X}})$. (Relating reference can be found in \citet{narici2010topological,bogachev2017topological,bourbaki1971gentop}). To distinguish between an index set $I$ and the data set $\mathcal{X}$, we use the term for the case of the latter as follows:
\begin{definition}
    Let $V$ be a linear subspace of $\mathbb{R}^{\mathcal{X}}$.
    For each \(x \in \mathcal{X}\), we use the term \textbf{evaluation functional at $x\in \mathcal{X}$ on $V$} to refer to the restriction of the canonical projection $p_{x}|_{V}:V \rightarrow \mathbb{R}_{x}$, denoting it as $ev_{x}$. Specifically, the function $ev_{x}: V \rightarrow \mathbb{R}$ is a linear functional defined by $ev_{x}(f) = f(x)$ for all $f \in V$.
\end{definition}
Now we define a reproducing kernel Banach space on $\mathcal{X}$ as follows:
\begin{definition}
[Definition of reproducing kernel Banach space \citep{bartolucci2023understanding,lin2022reproducing}]
    For a given set $\mathcal{X}$, a \textbf{reproducing kernel Banach space (RKBS)} $\mathcal{B}$ on $\mathcal{X}$ is a Banach space $\mathcal{B}$ of functions $f:\mathcal{X} \rightarrow \mathbb{R}$ such that
    \begin{enumerate}
        \item as a vector space, $\mathcal{B}$ is a linear subspace of $\mathbb{R}^{\mathcal{X}}$
        \item for all $x \in \mathcal{X}$, there is a constant $C_{x} \ge 0$ such that for all $f \in \mathcal{B}$, $|f(x)| \le C_{x}\lVert{f}\rVert_{\mathcal{B}}$.
    \end{enumerate}
\label{def. RKBS}
\end{definition}
According to the definition, all evaluation functionals on $\mathcal{B}$ are continuous. In other words, we have that for all $x\in \mathcal{X}$, $ev_{x} \in \mathcal{B}^{*}$.
Therefore, the norm topology of an RKBS $(\mathcal{B},\lVert\cdot\rVert_{\mathcal{B}})$  is finer than the HTVS $(\mathcal{B},\{ev_{x}\}_{x\in \mathcal{X}})$. Let $(\mathcal{B},\lVert\cdot\rVert_{1})$ and $(\mathcal{B},\lVert\cdot\rVert_{2})$ be two RKBSs on the same linear subspace $\mathcal{B}$ of $\mathbb{R}^{\mathcal{X}}$. Then by the Closed Graph Theorem, two norms $\lVert\cdot\rVert_{1}$ and $\lVert\cdot\rVert_{2}$ on the linear space $\mathcal{B}$ is equivalent (see \uppercase\expandafter{\romannumeral1} \S3 Exercise $2$ of \citet{bourbaki1953tvs} and Corollary $\text{\uppercase\expandafter{\romannumeral4}}_{1}$ of \citet{aronszajn1950theory}). In other words, when we have a function space $\mathcal{B}$, we can give an unique RKBS structure on $\mathcal{B}$ up to equivalence of norms. 

We will consider these RKBSs as hypothesis spaces in machine learning. The reason for using RKBS is as follows: When defining a hypothesis space (or function space) in machine learning, we consider completeness and pointwise convergence as the minimal assumptions required for the properties of the function space (see Chapter 1 of \citet{berlinet2011reproducing}).

\subsection{Characterization of RKBSs}
Before we state the characterization theorem of RKBSs, we introduce a method that induces a mathematical structure from a pre-existing structure. Let $V$ be a normed vector space over $\mathbb{R}$ equipped with the norm $\lVert\cdot\rVert_{V}$, and let $W$ be a vector space over $\mathbb{R}$. If there is a vector space isomorphism $T:V \rightarrow W$, then $\lVert T^{-1}(\cdot)\rVert_{V}:W\rightarrow \mathbb{R}$ defines a norm on $W$. Furthermore, when we consider $W$ as a normed vector space equipped with the norm $\lVert T^{-1}(\cdot)\rVert_{V}$, the linear isomorphism $T:(V,\lVert\cdot\rVert_{V}) \rightarrow (W,\lVert T^{-1}(\cdot)\rVert_{V})$ becomes an isometric isomorphism (This is referred to as the transport of structure).

Let $V$ and $W$ be vector spaces. If $T: V\rightarrow W$ is a linear map, then there exists an unique linear map $\hat{T}: V/\ker{T} \rightarrow W$ such that $\hat{T}\circ\pi = T$, where $\pi: V \rightarrow V/\ker{T}$ defined by $\pi(v) = [v]$ for $v \in V$. Throughout this paper, we use the notation $\hat{T}$ to denote the induced linear map described above in similar situations. 
We now state the characterization theorem of RKBSs introduced by \citet{combettes2018regularized}.
\begin{theorem}[Characterization of RKBSs \citep{bartolucci2023understanding,combettes2018regularized}]
    A linear subspace $\mathcal{B}$ of $\mathbb{R}^{\mathcal{X}}$ is an RKBS on $\mathcal{X}$ if and only if there exists a Banach space $\Psi$ and a map $\psi:\mathcal{X} \rightarrow \Psi^{*}$ such that $\mathcal{B} = \operatorname{im}(A) = \{f: \exists \nu \in \Psi \text{ s.t. } A(\nu) = f\}$ with the norm $\lVert{f}\rVert_{\mathcal{B}} = \underset{\nu \in A^{-1}(f)}{\inf}\lVert{\nu}\rVert_{\Psi}$,
    where $A:\Psi \rightarrow \mathbb{R}^{\mathcal{X}}$ is a linear map defined by $(A\nu)(x) := \left<\psi(x),\nu\right>$ for $x \in \mathcal{X}$ and $\nu \in \Psi$.
    \label{thm. RKBS Characterization}
\end{theorem}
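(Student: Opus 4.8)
The plan is to prove the two directions separately. For the "if" direction, suppose we are given a Banach space $\Psi$, a map $\psi:\mathcal{X}\to\Psi^{*}$, and the associated linear map $A:\Psi\to\mathbb{R}^{\mathcal{X}}$, $(A\nu)(x)=\langle\psi(x),\nu\rangle$. I would first observe that $\ker A$ is a closed subspace of $\Psi$: indeed $\ker A=\bigcap_{x\in\mathcal{X}}\ker(\nu\mapsto\langle\psi(x),\nu\rangle)$, an intersection of kernels of bounded (hence continuous) linear functionals, so it is closed. Therefore the quotient $\Psi/\ker A$ is a Banach space under the quotient norm, and by the universal property recalled just before the theorem there is an induced injective linear map $\hat A:\Psi/\ker A\to\mathbb{R}^{\mathcal{X}}$ with $\hat A\circ\pi=A$ and $\operatorname{im}\hat A=\operatorname{im}A=\mathcal B$. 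Since $\hat A$ is a linear isomorphism onto $\mathcal B$, the transport-of-structure principle (also recalled above) endows $\mathcal B$ with the norm $\lVert f\rVert_{\mathcal B}=\lVert\hat A^{-1}(f)\rVert_{\Psi/\ker A}$ making $\hat A$ an isometric isomorphism; in particular $\mathcal B$ is a Banach space. The key identification is that this transported norm equals the stated infimum: by the definition of the quotient norm, $\lVert\hat A^{-1}(f)\rVert_{\Psi/\ker A}=\lVert[\nu]\rVert_{\Psi/\ker A}=\inf_{\eta\in\ker A}\lVert\nu-\eta\rVert_{\Psi}=\inf_{\nu'\in A^{-1}(f)}\lVert\nu'\rVert_{\Psi}$ for any $\nu$ with $A\nu=f$, since $A^{-1}(f)=\nu+\ker A$. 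Finally I would check property (2) of Definition \ref{def. RKBS}: for $f\in\mathcal B$ and any $\nu\in A^{-1}(f)$, $|f(x)|=|\langle\psi(x),\nu\rangle|\le\lVert\psi(x)\rVert_{\Psi^{*}}\lVert\nu\rVert_{\Psi}$, and taking the infimum over $\nu\in A^{-1}(f)$ gives $|f(x)|\le\lVert\psi(x)\rVert_{\Psi^{*}}\lVert f\rVert_{\mathcal B}$, so $C_x:=\lVert\psi(x)\rVert_{\Psi^{*}}$ works; together with $\mathcal B\subseteq\mathbb{R}^{\mathcal X}$ this shows $\mathcal B$ is an RKBS.

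For the "only if" direction, suppose $\mathcal B$ is an RKBS on $\mathcal X$. The natural choice is to take $\Psi:=\mathcal B^{*}$, so that $\Psi^{*}=\mathcal B^{**}$, and to let $\psi:\mathcal X\to\mathcal B^{**}$ be $\psi(x):=\iota(ev_x)$, where $\iota:\mathcal B\hookrightarrow\mathcal B^{**}$ is the canonical embedding — this makes sense precisely because $ev_x\in\mathcal B^{*}$ by the defining property (2) of an RKBS. Then the associated map $A:\mathcal B^{*}\to\mathbb{R}^{\mathcal X}$ is $(A g)(x)=\langle\psi(x),g\rangle_{\mathcal B^{**},\mathcal B^{*}}=\langle ev_x,g\rangle_{\mathcal B^{*},\mathcal B}$... wait, more carefully: $(Ag)(x)=\langle\psi(x),g\rangle=\iota(ev_x)(g)=g(ev_x)$? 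That is not a function of $\mathcal B$. Instead I would take $\Psi$ so that $\Psi^{*}$ naturally contains the evaluation functionals: set $\Psi:=\mathcal B^{*}$ but define $\psi(x):=ev_x$ regarded as an element of $\mathcal B^{**}=\Psi^{*}$ via $\iota$; then for $g\in\mathcal B^{*}$ one has $\langle\psi(x),g\rangle=\langle\iota(ev_x),g\rangle$, which pairs $\mathcal B^{**}$ with $\mathcal B^{*}$ and does not return values in $\mathcal B$. The correct device, following \citet{combettes2018regularized}, is to instead realize $\mathcal B$ itself as $\operatorname{im}(A)$ for $A=\mathrm{id}$: take $\Psi$ with a predual-type structure. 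The cleanest route is: let $\Psi$ be any Banach space admitting a quotient map onto $\mathcal B$ (e.g. $\Psi=\ell^1(\mathcal B)$, the $\ell^1$-sum indexed by the unit ball, with $A$ the canonical summation/quotient map, which is a metric surjection), and set $\psi(x):=(ev_x\circ A)\in\Psi^{*}$; then $(A\nu)(x)=\langle\psi(x),\nu\rangle$ holds by construction, $\operatorname{im}(A)=\mathcal B$ as sets, and because $A$ is a metric surjection the infimum norm $\inf_{\nu\in A^{-1}(f)}\lVert\nu\rVert_{\Psi}$ recovers exactly $\lVert f\rVert_{\mathcal B}$.

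I expect the main obstacle to be the "only if" direction — specifically, producing the Banach space $\Psi$ and the feature map $\psi$ so that the induced infimum norm coincides \emph{isometrically} with the original RKBS norm, not merely up to equivalence. The resolution is the standard fact that every Banach space $\mathcal B$ is isometrically a quotient of an $\ell^1$-space (a metric surjection $A:\ell^1(S)\twoheadrightarrow\mathcal B$ for $S$ the closed unit ball of $\mathcal B$), combined with the observation that pushing the evaluation functionals through $A$ lands them in $\Psi^{*}$ precisely because each $ev_x$ is continuous on $\mathcal B$. Once $\Psi$ and $\psi$ are in place, verifying $(A\nu)(x)=\langle\psi(x),\nu\rangle$, $\operatorname{im}(A)=\mathcal B$, and the norm identity are all formal. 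I would also remark, as noted in the excerpt via the Closed Graph Theorem, that the RKBS norm on a fixed function space $\mathcal B$ is unique up to equivalence, so the isometric statement is really a normalization choice; the content of the theorem is the existence of \emph{some} feature representation $(\Psi,\psi)$, which the $\ell^1$ construction supplies.
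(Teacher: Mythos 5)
Your proof is correct, and the ``if'' direction coincides with the paper's own sketch: closedness of $\ker A$ as an intersection of kernels of the continuous functionals $\nu\mapsto\langle\psi(x),\nu\rangle$, the quotient Banach space $\Psi/\ker A$, transport of structure along $\hat A$, identification of the transported norm with $\inf_{\nu\in A^{-1}(f)}\lVert\nu\rVert_{\Psi}$, and the bound $|f(x)|\le\lVert\psi(x)\rVert_{\Psi^{*}}\lVert f\rVert_{\mathcal B}$ giving $C_x=\lVert\psi(x)\rVert_{\Psi^{*}}$. Where you diverge is the ``only if'' direction, which the paper does not spell out (it defers to \citet{bartolucci2023understanding}). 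The standard argument there is the trivial choice $\Psi:=\mathcal B$, $\psi(x):=ev_x\in\mathcal B^{*}$, $A:=\mathrm{id}$ viewed as the inclusion $\mathcal B\hookrightarrow\mathbb{R}^{\mathcal X}$; then $(A\nu)(x)=\nu(x)=\langle ev_x,\nu\rangle$, $A^{-1}(f)=\{f\}$, and the infimum norm is tautologically $\lVert f\rVert_{\mathcal B}$. Your false start with $\Psi=\mathcal B^{*}$ (which, as you noticed, pairs the wrong spaces) seems to have pushed you toward the heavier device of a metric surjection $A:\ell^1(S)\twoheadrightarrow\mathcal B$ with $S$ the closed unit ball and $\psi(x)=ev_x\circ A$. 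That construction is valid --- $A$ is norm-decreasing, $c=\lVert f\rVert\,e_{f/\lVert f\rVert}$ witnesses $\inf_{c\in A^{-1}(f)}\lVert c\rVert_1\le\lVert f\rVert_{\mathcal B}$, so the infimum norm is exactly $\lVert f\rVert_{\mathcal B}$, and $ev_x\circ A\in\Psi^{*}$ because $ev_x$ is continuous --- but it buys nothing over the identity representation, whose existence is all the theorem asserts. Your closing remark that the isometric statement is ``really a normalization choice'' is slightly off: the norm equivalence via the Closed Graph Theorem concerns two RKBS norms on the same underlying function space, whereas the theorem genuinely requires exhibiting a feature pair realizing the given norm isometrically, which the identity (or your $\ell^1$) construction does.
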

Note that the linear map $A$ is the linear transformation induced from the family of the linear maps $\{\psi(x):\Psi \rightarrow \mathbb{R}_{x}\}_{x\in X}$ by the universal property of the direct product of the vector spaces $\{\mathbb{R}_{x}\}_{x\in \mathcal{X}}$.
We briefly review the proof provided in \citet{bartolucci2023understanding}.
In the necessity part of the proof, it is shown that $\ker{A}$ is closed in $\Psi$ by the following equations:
\begin{align}
    \ker(A) &=\{\nu\in\Psi: \psi(x)(\nu) = 0 \text{ for all }x \in \mathcal{X}\}
    =\bigcap_{x\in{X}}\ker{\psi(x)}.
    \label{eq. Characterziation thm kerA closed}
\end{align}
Thus, $\Psi/\ker{A}$ can be a Banach space with the quotient norm.
Consider the linear map $\hat{A}:\Psi/\ker{A} \rightarrow \mathbb{R}^{\mathcal{X}}$ such that $A = \hat{A}\circ\pi$. Since $\hat{A}:\Psi/\ker{A}\cong \operatorname{im}(A)$ is an isomorphism of vector spaces, by the transport of the structure, $\mathcal{B}=\operatorname{im}(A)$ becomes a Banach space with the norm:
\begin{align*}
    \lVert{f}\rVert_{\mathcal{B}}
    =
    \lVert\hat{A}^{-1}(f)\rVert_{\Psi/
    \ker(A)}
    =
    \inf_{\nu \in \pi^{-1}(\hat{A}^{-1}(f))}\lVert \nu \rVert_{\Psi} 
    =
    \inf_{\nu \in A^{-1}(f)}\lVert \nu \rVert_{\Psi}     
\end{align*}
The evaluation functionals are continuous as follows:  
for any $f \in \mathcal{B}$ and $\nu \in A^{-1}(f)$, we have $|f(x)| = |A\nu(x)| 
\le
\lVert{\psi(x)}\rVert_{\Psi^{*}}\lVert\nu\rVert_{\Psi}$.
Thus, we can deduce that for all $x\in \mathcal{X}$,
\begin{align}
    \lVert ev_{x}(f)\rVert_{\mathbb{R}}=|f(x)| \le \lVert{\psi(x)}\rVert_{\Psi^{*}}\inf_{\nu \in A^{-1}(f)}\lVert\nu\rVert_{\Psi}
    =\lVert{\psi(x)}\rVert_{\Psi^{*}}\lVert f\rVert_{\mathcal{B}}.
    \label{eq. evaluation functional is bounded}
\end{align}

From now on, for a given RKBS $\mathcal{B}$, we consider a corresponding space $\Psi$, a map $\psi$ and an induced linear map $A$. In this situation, by abuse of notation, we may say that an RKBS triple $\mathcal{B}=(\Psi,\psi,A)$ is given. Each component of the triple $(\Psi,\psi,A)$ has a specific name. Specifically, we refer to $\Psi$ as a feature space, $\psi$ as a feature map, and $A$ as an RKBS map in order.
\subsection{One-layer neural networks}
In this subsection, we assume that $\Omega_{1} \subset \mathbb{R}^{d}$ and $\Omega_{2} \subset \mathbb{R}$ are compact, and let $\Omega = \Omega_{1} \times \Omega_{2}$. Consider a continuous nonlinear function $g:\mathbb{R} \rightarrow \mathbb{R}$. The prediction function represented by a one-layer neural network with a one-dimensional target can be expressed as follows:
\begin{align}
    \label{eq. one-layer nn}
    f(x) = \sum_{i=1}^{m}\eta_{i}g(x\cdot\theta_{i}-b_{i}),
\end{align}
where $x\in \mathcal{X}$, $\theta_{i} \in \Omega_{1}$, $b_{i} \in \Omega_{2}$ and $\eta_{i} \in \mathbb{R}$ for $i=1,\dots,m$. 
For convention, and with some abuse of notation, we define a continuous function $\sigma:\mathcal{X}\times\Omega \rightarrow \mathbb{R}
$ by $\sigma(x,w)=g(x\cdot\theta-b)$ where $w = (\theta,b)$. This gives the following simplified representation: $f(x)=\sum_{i=1}^{m}\eta_{i}\sigma(x,w_{i})$. Using measure-theoretic notation, we can have an integral representation of \eqref{eq. one-layer nn}: $f(x) = \int_{\Omega}\sigma(x,w)d\left(\sum_{i=1}^{m}\eta_{i}\delta_{w_{i}}\right)$ where $\delta_{w_{i}}$ is the Dirac measure at $w_{i}$. When considering the limit as $m \rightarrow \infty$ in \eqref{eq. one-layer nn}, we obtain the following: 
\begin{align*}
    \int_{\Omega}\sigma(x,w)d\left(\sum_{i=1}^{m}\eta_{i}\delta_{w_{i}}\right) \rightarrow \int_{\Omega}\sigma(x,w)d\mu(w),
\end{align*}
for some $\mu \in \mathcal{M}(\Omega)$. A more detailed explanation can be found in Chapter 9 of \citet{bach2024learning}. In the following subsection, we will define the hypothesis space of one-layer neural networks in a more abstract way using this relaxed expression.

\subsection{Integral RKBS and p-norm RKBS}

Directly using the characterization theorem, we can define the hypothesis spaces that are considered to represent one-layer neural networks. Until section 4, we consider a fixed element $\sigma$ in $C(\mathcal{X}\times\Omega)$, where $\mathcal{X}$ is a compact subset of $\mathbb{R}^{d}$ and $\Omega$ is a compact subset of $\mathbb{R}^{D}$ for some $d,D\in \mathbb{N}$.

Let $V$ and $W$ be a real normed vector spaces. If we denote $V^{**}$ be the bidual space of $V$, then there is a linear isometric embedding $\iota: V\rightarrow V^{**}$, called the canonical embedding of $V$ in $V^{**}$, defined by $\iota(v)(v^{*})= v^{*}(v)$ for $v\in V$ and $v^{*} \in V^{*}$.
For a given bounded linear operator $T:V \rightarrow W$, the dual operator of $T$ is the linear operator $T^{*}:W^{*} \rightarrow V^{*}$ defined by $T^{*}(w^{*}):= w^{*} \circ T$ for $w^{*} \in W^{*}$.

\begin{definition}[A class of integral RKBSs, associated with the function $\sigma$ \citep{bartolucci2023understanding,spek2022duality}]
    Let $\mathcal{M}(\Omega)$ be a feature space. Consider a feature map $\psi:\mathcal{X} \rightarrow \mathcal{M}(\Omega)^{*}(\underset{\mathcal{B}}{\cong} C(\Omega)^{**})$ defined by $\psi(x) = \Lambda^{*}(\iota(\sigma(x,\cdot)))$ for all $x \in \mathcal{X}$, where $\iota:C(\Omega)\rightarrow C(\Omega)^{**}$ is the canonical embedding of $C(\Omega)$ in $C(\Omega)^{**}$ and $\Lambda^{*}$ is the dual operator of $\Lambda: \mathcal{M}(\Omega) \rightarrow C(\Omega)^{*}$, which is defined in \eqref{eq. isometric isom measure space conti space}. 
    Then there is a linear map $A:\mathcal{M}(\Omega)\rightarrow \mathbb{R}^{\mathcal{X}}$ defined by $(A\mu)(x) = \left<\psi(x),\mu\right> = \int_{\Omega}\sigma(x,w)d\mu(w)$ for $x\in \mathcal{X}$ and $\mu \in \mathcal{M}(\Omega)$. An integral RKBS $\mathcal{F}_{\sigma}(\mathcal{X},\Omega)$, associated with the function $\sigma$ is defined by the Banach space
    \begin{align}
        \mathcal{F}_{\sigma}(\mathcal{X},\Omega)
        &:=\left\{f\in \mathbb{R}^{\mathcal{X}}:\exists\mu\in\mathcal{M}(\Omega) \text{ s.t. } \forall x\in \mathcal{X}, f(x)=\int_{\Omega}\sigma(x,w)d\mu(w)\right\},
    \end{align}
    equipped with the norm $\lVert{f}\rVert_{\mathcal{F}_{\sigma}(X,\Omega)} = \inf_{\mu\in A^{-1}(f)}\lVert{\mu}\rVert_{\mathcal{M}(\Omega)}$.
    \label{def. integral RKBS}
\end{definition}

In the above Definition \ref{def. integral RKBS}, consider the linear map $A:\mathcal{M}(\Omega)\rightarrow \mathbb{R}^{\mathcal{X}}$.
We deduce that, by the Dominated Convergence Theorem, $\operatorname{im}(A)$ is a linear subspace of $C(\mathcal{X})$ (see Theorem 2.27 of \citet{folland1999real}). Furthermore, from the inequality $\left\lVert A\mu\right\rVert_{C(\mathcal{X})} 
\le \sup_{x\in \mathcal{X},w \in \Omega}\left\lvert\sigma(x,w)\right\rvert \lVert\mu\rVert$, we can see that the map $A:\mathcal{M}(\Omega) \rightarrow C(\mathcal{X})$ is indeed a bounded operator. Recently, Steinwart showed that when $\mathcal{X}$ is an uncountable compact metric space, there is no RKHS $\mathcal{H}$ on $\mathcal{X}$ such that $C(\mathcal{X}) \subset \mathcal{H}$ (see \citet{steinwart2024reproducing}). We can obtain a similar result for the class of integral RKBS as well.
\begin{proposition}\label{prop. A is compact operator}
    The bounded operator $A:\mathcal{M}(\Omega) \rightarrow C(\mathcal{X})$
    defined by
    \begin{align*}
        (A\mu)(x) = \int_{\Omega}\sigma(x,w)d\mu(w)    
    \end{align*} 
    for $x\in \mathcal{X}$ and $\mu \in \mathcal{M}(\Omega)$ is compact.
\end{proposition}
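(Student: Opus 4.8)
The plan is to apply the Arzelà–Ascoli theorem to the image of the closed unit ball $B_{\mathcal{M}(\Omega)}$ of $\mathcal{M}(\Omega)$ under $A$. Since $\mathcal{X}$ is a compact metric space and $C(\mathcal{X})$ carries the supremum norm, it suffices to show that $A(B_{\mathcal{M}(\Omega)})$ is a bounded and uniformly equicontinuous subset of $C(\mathcal{X})$; relative compactness of $A(B_{\mathcal{M}(\Omega)})$ in $C(\mathcal{X})$ is precisely the assertion that $A$ is compact. Boundedness is immediate from the estimate recorded just before the statement, namely $\lVert A\mu\rVert_{C(\mathcal{X})}\le \sup_{(x,w)}\lvert\sigma(x,w)\rvert\,\lVert\mu\rVert_{\mathcal{M}(\Omega)}$, so $A(B_{\mathcal{M}(\Omega)})$ is contained in the ball of radius $\lVert\sigma\rVert_{C(\mathcal{X}\times\Omega)}$.

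For equicontinuity, I would fix $x,x'\in\mathcal{X}$ and $\mu\in B_{\mathcal{M}(\Omega)}$ and estimate
\[
\lvert (A\mu)(x)-(A\mu)(x')\rvert
= \left\lvert\int_{\Omega}\bigl(\sigma(x,w)-\sigma(x',w)\bigr)\,d\mu(w)\right\rvert
\le \sup_{w\in\Omega}\lvert\sigma(x,w)-\sigma(x',w)\rvert ,
\]
using that integration against a continuous function is controlled by the total variation norm (and $\lVert\mu\rVert\le 1$). Because $\mathcal{X}\times\Omega$ is compact, $\sigma$ is uniformly continuous there, so for every $\varepsilon>0$ there is $\delta>0$ with $\lvert\sigma(x,w)-\sigma(x',w)\rvert<\varepsilon$ for all $w\in\Omega$ whenever $\lvert x-x'\rvert<\delta$. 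This $\delta$ is independent of $\mu$, whence $A(B_{\mathcal{M}(\Omega)})$ is uniformly equicontinuous, and Arzelà–Ascoli yields the conclusion.

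There is no serious obstacle here; the only point requiring attention is that the equicontinuity modulus be chosen uniformly over the unit ball, which is exactly what uniform continuity of $\sigma$ on the compact product $\mathcal{X}\times\Omega$ provides. As an alternative I would mention a factorization argument: by Stone–Weierstrass on $\mathcal{X}\times\Omega$ one can approximate $\sigma$ uniformly by finite sums $\sum_{k=1}^{n}u_k(x)v_k(w)$ with $u_k\in C(\mathcal{X})$ and $v_k\in C(\Omega)$; the associated operators $\mu\mapsto\sum_k u_k\int_\Omega v_k\,d\mu$ have finite rank, and by the same norm estimate as above they converge to $A$ in operator norm, so $A$ is a limit of finite-rank operators and hence compact.
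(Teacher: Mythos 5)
Your proof is correct, but it takes a genuinely different route from the paper's. The paper argues on the domain side: it uses separability of $C(\Omega)$ to extract, from any bounded sequence in $\mathcal{M}(\Omega)\underset{\mathcal{B}}{\cong}C(\Omega)^{*}$, a weak* convergent subsequence via the sequential Banach--Alaoglu theorem, and then invokes a result on uniform convergence of integrals over the uniformly bounded, equicontinuous family $\Gamma=\{\sigma(x,\cdot):x\in\mathcal{X}\}$ (Exercise 8.10.134 of Bogachev) to conclude that the images converge in $C(\mathcal{X})$. You instead argue on the range side: you show directly that $A(B_{\mathcal{M}(\Omega)})$ is bounded and uniformly equicontinuous --- using only the total-variation bound $\lvert\int_{\Omega}(\sigma(x,w)-\sigma(x',w))\,d\mu(w)\rvert\le\sup_{w}\lvert\sigma(x,w)-\sigma(x',w)\rvert$ together with uniform continuity of $\sigma$ on the compact product $\mathcal{X}\times\Omega$ --- and then apply Arzel\`a--Ascoli. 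Your argument is more elementary and self-contained: it avoids both the weak* compactness machinery and the somewhat specialized reference on equicontinuous families, and it makes the role of the compactness of $\mathcal{X}\times\Omega$ transparent. Your Stone--Weierstrass alternative (approximating $A$ in operator norm by finite-rank operators $\mu\mapsto\sum_{k}u_{k}\int_{\Omega}v_{k}\,d\mu$) is also a valid and standard third route. The paper's approach, by contrast, exposes a slightly stronger fact implicit in its argument --- that $A$ maps weak* convergent sequences to norm convergent ones --- which is natural given that $\mathcal{M}(\Omega)$ is being viewed throughout as a dual space.
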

\begin{proof}
    Since $\Omega$ is a compact metric space, $C(\Omega)$ is separable space. Let $\{\mu_{n}\}$ be a bounded sequence in $\mathcal{M}(\Omega) \underset{\mathcal{B}}{\cong} C(\Omega)^{*}$. Then, by the separable version of the Banach-Alaoglu Theorem, there exists a weak* convergent subsequence $\{\mu_{n_{k}}\}$ such that $\mu_{n_{k}} \underset{w^{*}}{\longrightarrow} \mu$ (see Problem 10 of Chapter 4.9 in \citet{kreyszig1991introductory}).
    Define $\Gamma :=\{\sigma(x,\cdot)\in C(\Omega): x\in \mathcal{X}\}$.
    Since $\Gamma$ is uniformly bounded and pointwise equicontinuous, we have the following (see Exercise 8.10.134 of \citet{bogachev2007measure}):
    \begin{align*}
        \lim_{n\rightarrow\infty}\left\lVert A\mu_{n_{k}}-A\mu\right\rVert_{C(\mathcal{X})}
        &=
        \lim_{n\rightarrow\infty}\sup_{x\in \mathcal{X}}\left\lvert\int \sigma(x,w)d(\mu_{n_{k}}-\mu)(w)\right\rvert
        \\
        &=
        \lim_{n\rightarrow\infty}\sup_{f\in\Gamma}\left\lvert\int fd(\mu_{n_{k}}-\mu)\right\rvert
        = 0.
    \end{align*}
\end{proof}

Using the proposition above, it follows that if $\operatorname{im}(A)$ is closed in $C(\mathcal{X})$, then $\operatorname{im}(A)$ has finite dimension. Thus, when $\mathcal{X}$ is an infinite compact metric space, we deduce that $\mathcal{F}_{\sigma}(\mathcal{X},\Omega) \subsetneq C(\mathcal{X})$, and in general, $\mathcal{F}_{\sigma}(\mathcal{X},\Omega)$ cannot be a Banach space if it equipped with the supremum norm.

\begin{definition}[A class of p-Norm RKBS, associated with the function $\sigma$ \citep{spek2022duality}]
    Let $\pi \in P(\Omega)$ be given. Let $p$ and $q$ be conjugate indices such that $1\le p < \infty$.
    Take a feature space $\Psi$ as $L^{p}(\pi)$ and choose a feature map $\psi: \mathcal{X} \rightarrow (L^{p}(\pi))^{*}$ defined by $\psi(x) = \Xi(\sigma(x,\cdot))$ for $x\in \mathcal{X}$, where $\Xi: L^{q}(\pi)\rightarrow (L^{p}(\pi))^{*}$ is the isometric isomorphism defined in \eqref{eq. isometric isom dual of L^p and L^q}\footnote{ We wrote
    $\psi(x) = \Xi(\sigma(x,\cdot))$ as a slight abuse of notation. More precisely, $\psi(x) = \Xi(T(\sigma(x,\cdot)))$ where $T:\mathfrak{L}^{p}(\pi) \rightarrow \mathfrak{L}^{p}(\pi)/N\underset{\mathcal{B}}{\cong} L^{p}(\pi)$ is the canonical quotient map, and $N:=\{ f \in\mathfrak{L}^{p}(\pi):\lVert f\rVert_{p}=0\}$ is the subspace of functions vanishing almost everywhere.}. Then, there is a linear map $A:L^{p}(\pi)\rightarrow \mathbb{R}^{\mathcal{X}}$ defined by $(Ah)(x) = \left<\psi(x),h\right>$ for $x\in \mathcal{X}$ and $h \in L^{p}(\pi)$.
    We define a p-Norm RKBS $\mathcal{L}_{\sigma}^{p}(\pi)$, associated with the function $\sigma$ by the Banach space
    \begin{align}
        \mathcal{L}_{\sigma}^{p}(\pi)
        :=
        \left\{f \in \mathbb{R}^{\mathcal{X}}: \exists h \in L^{p}(\pi) \text{ s.t. }\forall x\in \mathcal{X}, f(x)=\int_{\Omega}\sigma(x,w)h(w)d\pi(w) \right\},
    \end{align}
    equipped with the norm $\lVert{f}\rVert_{\mathcal{L}_{\sigma}^{p}(\pi)} = \inf_{h \in A^{-1}(f)}\lVert{h}\rVert_{L^{p}(\pi)}$.
    \label{def. p-norm RKBS}
\end{definition}

When we consider the $p=2$ case, we obtain the RKHS $\mathcal{L}_{\sigma}^{2}(\pi)$. This space corresponds to $\mathcal{F}_{2}$ as described in \citet{bach2017breaking}. The kernel of $\mathcal{L}_{\sigma}^{2}(\pi)$ is given by $k(x,y) = \int_{\Omega}\sigma(x,w)\sigma(y,w)d\pi(w)$ for $(x,y) \in \mathcal{X}\times\mathcal{X}$.
Furthermore, $\mathcal{L}_{\sigma}^{2}(\pi)$ is embedded in $\mathcal{L}_{\sigma}^{1}(\pi)$ (that is, $\mathcal{L}_{\sigma}^{2}(\pi)\subset \mathcal{L}_{\sigma}^{1}(\pi)$) and for all $f \in \mathcal{L}_{\sigma}^{2}(\pi)$,  $\lVert f\rVert_{\mathcal{L}_{\sigma}^{1}(\pi)} \le \lVert f\rVert_{\mathcal{L}_{\sigma}^{2}(\pi)}$.
As an analogue to the case of $L^{p}$ space, we sometimes use the notation $\mathcal{L}_{\sigma}^{p}(\Omega,\pi)$ instead of $\mathcal{L}_{\sigma}^{p}(\pi)$ to avoid confusion.

\subsection{Infinite sum of reproducing kernel Banach spaces}
Let an RKBS $\mathcal{B}$ be given. If we consider an evaluation functional on $\mathcal{B}$ evaluating at $x\in \mathcal{X}$ by $ev_{x}:\mathcal{B} \rightarrow \mathbb{R}$, as discussed earlier, then we have that for all $x\in \mathcal{X}$, $ev_{x} \in \mathcal{B}^{*}$.
Thus, if we assume that a collection of RKBSs $\{\mathcal{B}_{i}\}_{i\in I}$ is given and denote $ev_{x}^{i}$ as the evaluation functional on $\mathcal{B}_{i}$ evaluating at $x\in \mathcal{X}$, then for all $i \in I$ and $x \in \mathcal{X}$, $ev_{x}^{i} \in \mathcal{B}_{i}^{*}$. Now, we define the sum of RKBSs as follows, modifying Example 3.13 in \citet{combettes2018regularized} and the theorem on page 353 of \citet{aronszajn1950theory}:
\begin{proposition}[Infinite sum of reproducing kernel Banach spaces]
    Let $p$ and $q$ be conjugate indices with $1\le p <\infty$. Let $\{B_{i}\}_{i\in I}$ be a collection of Reproducing Kernel Banach Spaces on $\mathcal{X}$. If 
    $(ev_{x}^{i})_{i\in I} \in \bigoplus_{i\in I}^{q}\mathcal{B}_{i}^{*}$ for all $x \in \mathcal{X}$, then
    $\mathcal{B} = \left\{ _{\mathbb{R}^{\mathcal{X}}}{\sum}_{i \in I}f_{i}: (f_{i})_{i\in I} \in \bigoplus_{i\in I}^{p}\mathcal{B}_{i} \right\}$ is an RKBS on $\mathcal{X}$ with the norm $\lVert f\rVert_{\mathcal{B}} 
    =
    \inf_{f = _{\mathbb{R}^{\mathcal{X}}}{\sum}_{i \in I}f_{i}}\lVert(f_{i})_{i\in I}\rVert_{\bigoplus_{i\in I}^{p}\mathcal{B}_{i}}$.
    \label{prop. infinite sum of RKBSs}
\end{proposition}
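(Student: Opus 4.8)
\textit{Proof proposal.}
The plan is to read off a feature-space representation of $\mathcal{B}$ and then invoke the characterization theorem, Theorem \ref{thm. RKBS Characterization}. Take $\Psi:=\bigoplus_{i\in I}^{p}\mathcal{B}_{i}$, which is a Banach space because each $\mathcal{B}_{i}$ is. By \eqref{eq. Duality of direct sum} the map $\Phi:\bigoplus_{i\in I}^{q}\mathcal{B}_{i}^{*}\to\Psi^{*}$ is an isometric isomorphism, so the hypothesis $(ev_{x}^{i})_{i\in I}\in\bigoplus_{i\in I}^{q}\mathcal{B}_{i}^{*}$ lets me define a map $\psi:\mathcal{X}\to\Psi^{*}$ by $\psi(x):=\Phi\!\left((ev_{x}^{i})_{i\in I}\right)$, so that for $(f_{i})_{i\in I}\in\Psi$ we have $\langle\psi(x),(f_{i})_{i\in I}\rangle=\sum_{i\in I}\langle ev_{x}^{i},f_{i}\rangle=\sum_{i\in I}f_{i}(x)$. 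Let $A:\Psi\to\mathbb{R}^{\mathcal{X}}$ be the linear map induced by $\psi$ as in Theorem \ref{thm. RKBS Characterization}, i.e.\ $(A\nu)(x)=\langle\psi(x),\nu\rangle$.

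First I would check that this series behaves well. By H\"older's inequality for the conjugate pair $(p,q)$ (equivalently, by the well-definedness of $\Phi$ in \eqref{eq. Duality of direct sum}),
\[
\sum_{i\in I}|f_{i}(x)|=\sum_{i\in I}|\langle ev_{x}^{i},f_{i}\rangle|\le\|(ev_{x}^{i})_{i\in I}\|_{\bigoplus_{i\in I}^{q}\mathcal{B}_{i}^{*}}\,\|(f_{i})_{i\in I}\|_{\bigoplus_{i\in I}^{p}\mathcal{B}_{i}}<\infty,
\]
so for every $x$ the scalar family $(f_{i}(x))_{i\in I}$ is absolutely summable, hence summable in $\mathbb{R}$. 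Since the topology of $\mathbb{R}^{\mathcal{X}}$ is the topology of pointwise convergence, this forces $(f_{i})_{i\in I}$ to be summable in $\mathbb{R}^{\mathcal{X}}$ with $\bigl({}_{\mathbb{R}^{\mathcal{X}}}{\sum}_{i\in I}f_{i}\bigr)(x)=\sum_{i\in I}f_{i}(x)$ for all $x$. In particular the set $\mathcal{B}$ in the statement is well-defined, and $A\bigl((f_{i})_{i\in I}\bigr)={}_{\mathbb{R}^{\mathcal{X}}}{\sum}_{i\in I}f_{i}$, whence $\operatorname{im}(A)=\mathcal{B}$ exactly (as sets). Then I would simply apply Theorem \ref{thm. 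RKBS Characterization}: $\mathcal{B}=\operatorname{im}(A)$ is an RKBS on $\mathcal{X}$, and its norm is $\|f\|_{\mathcal{B}}=\inf_{\nu\in A^{-1}(f)}\|\nu\|_{\Psi}=\inf_{f={}_{\mathbb{R}^{\mathcal{X}}}{\sum}_{i}f_{i}}\|(f_{i})_{i\in I}\|_{\bigoplus_{i\in I}^{p}\mathcal{B}_{i}}$, which is precisely the claimed formula.

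I do not expect a genuine obstacle: the content is just that $\bigoplus^{p}$ of the feature spaces, paired against the $\bigoplus^{q}$-valued family of evaluation functionals, slots directly into the characterization theorem. The one point requiring care is the bookkeeping around the sum taken in the product topology --- establishing that ${}_{\mathbb{R}^{\mathcal{X}}}{\sum}_{i}f_{i}$ exists for every $(f_{i})_{i\in I}\in\bigoplus_{i\in I}^{p}\mathcal{B}_{i}$ (so that $\mathcal{B}$ is even well-defined), that it coincides with the pointwise sum $x\mapsto\sum_{i}f_{i}(x)$, and hence that $\operatorname{im}(A)$ equals $\mathcal{B}$ on the nose with the matching infimum norm, rather than merely up to equivalence of RKBS norms.
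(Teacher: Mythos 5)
Your proposal is correct and takes essentially the same route as the paper: both choose $\bigoplus_{i\in I}^{p}\mathcal{B}_{i}$ as the feature space, take $\psi(x)=\Phi\left((ev_{x}^{i})_{i\in I}\right)$ as the feature map, and invoke Theorem \ref{thm. RKBS Characterization}. The summability bookkeeping you spell out (that the pointwise absolute convergence from H\"older yields summability in the product topology of $\mathbb{R}^{\mathcal{X}}$, so $\operatorname{im}(A)$ is exactly the set $\mathcal{B}$ in the statement) is precisely what the paper records in the remark immediately following its proof.
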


\begin{proof} 
    Let $\bigoplus_{i\in I}^{p}\mathcal{B}_{i}$ be a feature space and define a feature map $\mathbf{s}:\mathcal{X} \rightarrow \left(\bigoplus_{i\in I}^{p}\mathcal{B}_{i}\right)^{*}$ as $\mathbf{s}(x) = \Phi((ev_{x}^{i})_{i\in I})$ for $x\in \mathcal{X}$, where $\Phi:\bigoplus_{i\in I}^{q}\mathcal{B}_{i}^{*}\rightarrow\left( \bigoplus_{i\in I}^{p}\mathcal{B}_{i}\right)^{*}$ is the isometric isomorphism defined in \eqref{eq. Duality of direct sum}.
    Now, there is a linear transformation
    $\mathcal{S}: \bigoplus_{i\in I}^{p}\mathcal{B}_{i} \rightarrow \mathbb{R}^{\mathcal{X}}$ by $\left(\mathcal{S}(f_{i})_{i\in I}\right)(x) = \left<\mathbf{s}(x),(f_{i})_{i \in I}\right>$ for $(f_{i})_{i \in I} \in \bigoplus_{i\in I}^{p}\mathcal{B}_{i}$ and $x \in \mathcal{X}$. Then, by Theorem \ref{thm. RKBS Characterization}, $\bigoplus_{i\in I}^{p}\mathcal{B}_{i}/ \ker(\mathcal{S}) = \operatorname{im}(\mathcal{S})$ is an RKBS on $\mathcal{X}$ with the norm $\lVert f\rVert_{\mathcal{B}} = \inf_{(f_{i})_{i\in I}\in \mathcal{S}^{-1}(f)}\lVert(f_{i})_{i\in I}\rVert_{\bigoplus_{i\in I}^{p}\mathcal{B}_{i}}$.
\end{proof}

Note that the property of net in the initial topology implies that $f = _{\mathbb{R}^{\mathcal{X}}}{\sum}_{i \in I}f_{i}$ in $(\mathbb{R}^{\mathcal{X}},\{p_{x}\}_{x\in \mathcal{X}})$ is equivalent to $f(x) = \sum_{i \in I}f_{i}(x)$ for all $x \in \mathcal{X}$. Thus, we have that
\begin{align*}
    \mathcal{B} 
    &=
    \left\{f \in \mathbb{R}^{\mathcal{X}}: \exists (f_{i})_{i\in I} \in \bigoplus_{i\in I}^{p}\mathcal{B}_{i} \text{ s.t. }\forall x\in \mathcal{X}, f(x)= \sum_{i\in I}\left< ev_{x}^{i},f_{i}\right> \right\}
    \\
    &=
    \left\{ _{\mathbb{R}^{\mathcal{X}}}{\sum}_{i \in I}f_{i}: (f_{i})_{i\in I} \in \bigoplus_{i\in I}^{p}\mathcal{B}_{i} \right\}.
\end{align*}
From now on, we denote $\mathcal{B}$ mentioned in Proposition \ref{prop. infinite sum of RKBSs} by $\sum_{i\in I}^{p}\mathcal{B}_{i}$ and call it the sum of the family of RKBSs $\{\mathcal{B}_{i}\}_{i\in I}$. In particular, for the case of $p =1$, we denote $\mathcal{B}$ as $\sum_{i\in I}\mathcal{B}_{i}$. In Proposition \ref{prop. infinite sum of RKBSs}, we intentionally use the notation $\mathbf{s}$ for the feature map and $\mathcal{S}$ for the RKBS map to emphasize that they are used to represent the sum of RKBSs. With a necessary abuse of notation, we define the sum of RKBSs via an RKBS triple as $\sum_{i\in I}^{p}\mathcal{B}_{i} = (\bigoplus_{i\in I}^{p}\mathcal{B}_{i}, \mathbf{s}, \mathcal{S})$ where $\mathbf{s}$ and $\mathcal{S}$ are defined in the proof of Proposition \ref{prop. infinite sum of RKBSs}.
\begin{remark}
    Let a family of RKBS triples $ 
    \{\mathcal{B}_{i}=(\Psi_{i},\psi_{i},A_{i})\}_{i\in I}$ be given. From \eqref{eq. evaluation functional is bounded}, we know that $\lVert ev_{x}^{i} \rVert_{B_{i}^{*}} \le \lVert \psi_{i}(x)\rVert_{\Psi_{i}^{*}}$ for all $x \in \mathcal{X}$ and $i\in I$.
    Thus, instead of assuming $(ev_{x}^{i})_{i\in I} \in \bigoplus_{i\in I}^{q}\mathcal{B}_{i}^{*}$  for all $x \in \mathcal{X}$, it suffices to assume that $(\psi_{i}(x))_{i\in I} \in \bigoplus_{i\in I}^{q}\Psi_{i}^{*}$ for all $x\in \mathcal{X}$.\label{remark. inf sum condition relate feature map}
\end{remark}

\section{Main Results} 

\subsection{Compatibility between the sum of RKBSs and the direct sum of feature spaces}

In this section, we present the compatibility between the sum of RKBSs and the direct sum of feature spaces. Before stating our main proposition, we prove the following lemma, which says that the restriction to the direct sum of Banach spaces of the product of (quotient, isometry) maps preserves their properties.
 
\begin{lemma}\label{lem. for compatibility} 
Suppose $1\le p <\infty$, and let a family of Banach spaces $\{X_{i}\}_{i \in I}$ be given.
\begin{enumerate}
    \item Suppose that for each $i\in I$, $D_{i}$ is a closed linear subspace of $X_{i}$, and let $\pi_{i}:X_{i} \rightarrow X_{i}/D_{i}$ be the projection map defined by $\pi_{i}(x_{i}):= \left[ x_{i}\right]$ for $x_{i} \in X_{i}$. Then, the map $\widetilde{\left( \pi_{i}\right)_{i\in I}}:\bigoplus_{i\in I}^{p}X_{i} \rightarrow \bigoplus_{i\in I}^{p}X_{i}/D_{i}$ defined by $\widetilde{\left(\pi_{i}\right)_{i\in I}}\left((x_{i})_{i\in I}\right) = \left(\pi_{i}(x_{i})\right)_{i\in I}$ for $(x_{i})_{i \in I}\in \bigoplus_{i\in I}^{p}X_{i}$ is a surjective bounded linear operator.

    \item Assume there is another family of Banach spaces $\{ Y_{i}\}_{i \in I}$. If for each $i\in I$, there is an isometric isomorphism $\phi_{i}: X_{i} \rightarrow Y_{i}$, then the map $\widetilde{\left( \phi_{i}\right)_{i \in I}}: \bigoplus_{i\in I}^{p}X_{i} \rightarrow \bigoplus_{i\in I}^{p}Y_{i}$ defined by $\widetilde{\left(\phi_{i}\right)_{i\in I}}\left((x_{i})_{i\in I}\right) = \left(\phi_{i}(x_{i})\right)_{i\in I}$ for $(x_{i})_{i \in I}\in \bigoplus_{i\in I}^{p}X_{i}$ is an isometric isomorphism.
\end{enumerate}
\end{lemma}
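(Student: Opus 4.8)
The plan is to treat both maps by the same routine template: check that the map is well defined (its image lands in the stated target direct sum), that it is linear, and then establish the remaining claim (surjectivity for (1); bijectivity plus norm-preservation for (2)). Because every construction here is performed coordinatewise, each step reduces to a single coordinatewise norm inequality combined with the defining $\ell^{p}$-summation of the direct-sum norm. The only place where a genuine (if still elementary) choice is required is in the surjectivity of part (1), where one must pick a representative in each $X_i$ whose norm is controlled well enough to keep the resulting family $\ell^{p}$-summable; I expect this to be the main obstacle, and it is handled by the crude bound $\lVert x_i\rVert\le 2\lVert y_i\rVert$ (any fixed constant $>1$ would do).

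For part (1): since $D_i$ is closed, $X_i/D_i$ is a Banach space and hence $\bigoplus_{i\in I}^{p}X_i/D_i$ is a Banach space, so the statement is meaningful. The quotient map satisfies $\lVert\pi_i(x_i)\rVert_{X_i/D_i}=\inf_{d\in D_i}\lVert x_i-d\rVert_{X_i}\le\lVert x_i\rVert_{X_i}$ for all $i\in I$ and $x_i\in X_i$; hence for $(x_i)_{i\in I}\in\bigoplus_{i\in I}^{p}X_i$ we get $\sum_{i\in I}\lVert\pi_i(x_i)\rVert_{X_i/D_i}^{p}\le\sum_{i\in I}\lVert x_i\rVert_{X_i}^{p}<\infty$, which simultaneously shows that $\widetilde{(\pi_i)_{i\in I}}$ maps into $\bigoplus_{i\in I}^{p}X_i/D_i$ and that it is bounded with norm $\le 1$; linearity is immediate from the linearity of the $\pi_i$ and the componentwise vector-space structure of the direct product. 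For surjectivity, take $(y_i)_{i\in I}\in\bigoplus_{i\in I}^{p}X_i/D_i$; using $\lVert y_i\rVert_{X_i/D_i}=\inf\{\lVert x\rVert_{X_i}:x\in X_i,\ \pi_i(x)=y_i\}$, pick for each $i$ with $y_i\neq 0$ a representative $x_i$ with $\pi_i(x_i)=y_i$ and $\lVert x_i\rVert_{X_i}\le 2\lVert y_i\rVert_{X_i/D_i}$, and set $x_i=0$ otherwise; then $\sum_{i\in I}\lVert x_i\rVert_{X_i}^{p}\le 2^{p}\sum_{i\in I}\lVert y_i\rVert_{X_i/D_i}^{p}<\infty$, so $(x_i)_{i\in I}\in\bigoplus_{i\in I}^{p}X_i$ and $\widetilde{(\pi_i)_{i\in I}}\big((x_i)_{i\in I}\big)=(y_i)_{i\in I}$. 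No countability assumption on $I$ is used: the representatives are chosen simultaneously (axiom of choice) and summability is inherited termwise.

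For part (2): each $\phi_i$ being an isometry gives $\lVert\phi_i(x_i)\rVert_{Y_i}=\lVert x_i\rVert_{X_i}$, whence $\sum_{i\in I}\lVert\phi_i(x_i)\rVert_{Y_i}^{p}=\sum_{i\in I}\lVert x_i\rVert_{X_i}^{p}$; this shows at once that $\widetilde{(\phi_i)_{i\in I}}$ carries $\bigoplus_{i\in I}^{p}X_i$ into $\bigoplus_{i\in I}^{p}Y_i$ and is norm-preserving, and linearity is again componentwise. For bijectivity, observe that each $\phi_i^{-1}:Y_i\to X_i$ is again an isometric isomorphism, so the same argument shows $\widetilde{(\phi_i^{-1})_{i\in I}}:\bigoplus_{i\in I}^{p}Y_i\to\bigoplus_{i\in I}^{p}X_i$ is well defined; since composition in a direct product is performed coordinatewise, $\widetilde{(\phi_i^{-1})_{i\in I}}$ is a two-sided inverse of $\widetilde{(\phi_i)_{i\in I}}$, so the latter is an isometric isomorphism. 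Thus, beyond the representative selection in (1), no step presents any real difficulty.
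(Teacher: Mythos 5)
Your proof is correct, and its overall structure (coordinatewise norm estimates for well-definedness and boundedness, explicit choice of representatives for surjectivity, coordinatewise inverse for part (2)) matches the paper's. The one step where you genuinely diverge is the surjectivity argument in part (1): the paper first observes that the support set $N=\{i\in I:\lVert \pi_i(x_i)\rVert_{X_i/D_i}>0\}$ is countable, fixes a bijection $f:\mathbb{N}\to N$, and chooses representatives whose $p$-th power exceeds the infimum by at most an additive error $1/k^2$, summing these errors to control the total; you instead choose, uniformly over all indices with $y_i\neq 0$, a representative satisfying the multiplicative bound $\lVert x_i\rVert_{X_i}\le 2\lVert y_i\rVert_{X_i/D_i}$ (valid because $2\lVert y_i\rVert$ strictly exceeds the infimum whenever $y_i\neq 0$, and $\lVert y_i\rVert=0$ forces $y_i=0$ since $D_i$ is closed). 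Your version is shorter and avoids the countability-of-support and reordering machinery entirely, at the negligible cost of the constant $2^p$ in the intermediate estimate, which is irrelevant since only finiteness of the sum is needed. You also supply the full argument for part (2), which the paper dismisses with ``we can also prove (2) directly''; your construction of the inverse as $\widetilde{(\phi_i^{-1})_{i\in I}}$ is exactly the intended direct proof.
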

\begin{proof}
    We know that for each $i\in I$, $\pi_{i}$ is a surjective bounded linear operator, and its norm satisfies $\lVert \pi_{i}\rVert\le 1$ (see \uppercase\expandafter{\romannumeral3} \S4 Theorem 4.2 of \citet{conway1997course}). Additionally, there is an unique linear map $\left(\pi_{i}\right)_{i \in I}: \prod_{i\in I}X_{i} \rightarrow \prod_{ i \in I}X_{i}/D_{i}$ such that $\pi_{j}\circ p_{j} = q_{j} \circ \left(\pi_{i}\right)_{i \in I}$ for all $j \in I$, where $p_{j}$ and $q_{j}$ are $j$-th canonical projections of $\prod_{i\in I}X_{i}$ and $\prod_{i\in I}X_{i}/D_{i}$, respectively. 
    Consider the restriction of $\left(\pi_{i}\right)_{i \in I}$ to $\bigoplus_{i\in I}^{p}X_{i}$ and denote it by $\widetilde{\left(\pi_{i}\right)_{i\in I}}: \bigoplus_{i\in I}^{p}X_{i} \rightarrow \prod_{i\in I}X_{i}/D_{i}$.
    Let $\left(x_{i}\right)_{i\in I} \in \bigoplus_{i\in I}^{p}X_{i}$.
    Since $\widetilde{\left(\pi_{i}\right)_{i\in I}}\left((x_{i})_{i\in I}\right) = \left(\pi_{i}(x_{i})\right)_{i\in I} \in \prod_{i\in I}X_{i}/D_{i}$ and $\sum_{i\in I}\lVert \pi_{i}(x_{i})\rVert_{X_{i}/D_{i}}^{p} \le \sum_{i \in I}\lVert x_{i}\rVert_{X_{i}}^{p} < \infty$, it follows that $\operatorname{im}\left(\widetilde{\left(\pi_{i}\right)_{i\in I}}\right) \subset \bigoplus_{i\in I}^{p}X_{i}/D_{i}$. From this, we also know that $\widetilde{\left(\pi_{i}\right)_{i\in I}}$ is a bounded operator with norm less than $1$.
    
    It remains to show the surjectivity of $\widetilde{\left( \pi_{i}\right)_{i\in I}}:\bigoplus_{i\in I}^{p}X_{i} \rightarrow \bigoplus_{i\in I}^{p}X_{i}/D_{i}$.
    Let $(\pi_{i}(x_{i}))_{i \in I} \in \bigoplus_{i\in I}^{p}X_{i}/D_{i}$. Then, we have $\sum_{i \in I}\inf_{d_{i} \in D_{i}}\lVert x_{i} + d_{i}\rVert_{X_{i}}^{p} 
    =
    \sum_{i \in I}\left(\inf_{d_{i} \in D_{i}}\lVert x_{i} + d_{i}\rVert_{X_{i}}\right)^{p} 
    =
    \sum_{i \in I}\lVert \pi_{i}(x_{i})\rVert_{X_{i}/D_{i}}^{p}
    < \infty$ and the set $N =\{i \in I: \lVert \pi_{i}(x_{i})\rVert_{X_{i}/D_{i}} > 0\}$ is countable. Let $f:\mathbb{N} \rightarrow N$ be a reordering bijection. From the definition of the infimum, for each $k\in \mathbb{N}$, we can take $\Tilde{d}_{f(k)} \in D_{f(k)}$ such that 
    \begin{align*}
        \lVert x_{f(k)} + \Tilde{d}_{f(k)}\rVert_{X_{f(k)}}^{p}
        <
        \inf_{d_{f(k)} \in D_{f(k)}}\lVert x_{f(k)} + d_{f(k)}\rVert_{X_{f(k)}}^{p} + \frac{1}{k^{2}}.
    \end{align*}
    Then, we have that:
    \begin{align*}
        \sum_{i \in N}\lVert x_{i} + \Tilde{d}_{i}\rVert_{X_{i}}^{p} 
        = 
        \sum_{k=1}^{\infty}\lVert x_{f(k)} + \Tilde{d}_{f(k)}\rVert_{X_{f(k)}}^{p}
        <
        \sum_{k=1}^{\infty}\inf_{d_{f(k)} \in D_{f(k)}}\lVert x_{f(k)} + d_{f(k)}\rVert_{X_{f(k)}}^{p} + \sum_{k=1}^{\infty}\frac{1}{k^{2}}
        <
        \infty.
    \end{align*}
    Thus, if we take $x_{i}'=
    \begin{cases}
    x_{i}+\Tilde{d}_{i} & \mbox{if }i\in N, \\
    0 & \mbox{if } i \in I\setminus N
    \end{cases}$, then $\left(x_{i}'\right)_{i\in I} \in \bigoplus_{i \in I}^{p}X_{i}$ and $\widetilde{\left(\pi_{i}\right)_{i\in I}}\left( (x_{i}')_{i\in I}\right) = \left( \pi_{i}(x_{i})\right)_{i \in I}$.
    We can also prove (2) directly.

\end{proof}

The following proposition is one of the main result of this paper. It states that when we have a family of RKBSs, there is an RKBS induced by the direct sum of feature spaces, which is isometrically isomorphic to the sum of the given family of RKBSs. Conversely, when we have an RKBS induced by the direct sum of feature spaces, there is a collection of RKBSs such that their sum is isometrically isomorphic to the given RKBS.

\begin{proposition}[Compatibility]
\label{prop. compatibility}
Let $I \neq \emptyset$ be an index set. Let $p$ and $q$ be conjugate indices, where $p$ satisfies $1\le p < \infty$.
\begin{enumerate}
    \item Suppose a family of RKBS triples $ 
    \{\mathcal{B}_{i}=(\Psi_{i},\psi_{i},A_{i})\}_{i\in I}$ is given and $(ev_{x}^{i})_{i\in I}\in \bigoplus_{i\in I}^{q}\mathcal{B}_{i}^{*}$ for all $x \in \mathcal{X}$.
    Then, there is an RKBS triple $\mathcal{B}= (\bigoplus_{i\in I}^{p}\Psi_{i},\psi,A)$ such that $\mathcal{B} \underset{\mathcal{B}}{\equiv} \sum_{i\in I}^{p}\mathcal{B}_{i}$.
    \item For an RKBS triple $\mathcal{B}= (\bigoplus_{i\in I}^{p}\Psi_{i},\psi,A)$, there is a family of RKBS triples $\{\mathcal{B}_{i} = (\Psi_{i},\psi_{i},A_{i})\}_{i\in I}$ such that $\mathcal{B} \underset{\mathcal{B}}{\equiv} \sum_{i\in I}^{p}\mathcal{B}_{i}$.
\end{enumerate}
\end{proposition}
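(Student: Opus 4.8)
The plan is to prove (1) by pulling the RKBS triple of $\sum_{i\in I}^{p}\mathcal{B}_{i}$ back along the coordinatewise map of the RKBS maps $A_{i}$, and to prove (2) by extracting the $\Psi_{i}$-components of the given feature map and then applying (1). For part (1), write $(\bigoplus_{i\in I}^{p}\mathcal{B}_{i},\mathbf{s},\mathcal{S})$ for the RKBS triple of $\sum_{i\in I}^{p}\mathcal{B}_{i}$ provided by Proposition \ref{prop. infinite sum of RKBSs} (available precisely because $(ev_{x}^{i})_{i\in I}\in\bigoplus_{i\in I}^{q}\mathcal{B}_{i}^{*}$ for every $x$). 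By Theorem \ref{thm. RKBS Characterization}, each $A_{i}$ factors as $A_{i}=\hat{A}_{i}\circ\pi_{i}$, where $\pi_{i}:\Psi_{i}\to\Psi_{i}/\ker A_{i}$ is the quotient map ($\ker A_{i}$ being closed by \eqref{eq. Characterziation thm kerA closed}) and $\hat{A}_{i}:\Psi_{i}/\ker A_{i}\to\mathcal{B}_{i}$ is an isometric isomorphism; in particular $A_{i}:\Psi_{i}\to\mathcal{B}_{i}$ is a bounded surjection of norm at most $1$. By Lemma \ref{lem. for compatibility}, the coordinatewise map $\widetilde{(A_{i})_{i\in I}}=\widetilde{(\hat{A}_{i})_{i\in I}}\circ\widetilde{(\pi_{i})_{i\in I}}:\bigoplus_{i\in I}^{p}\Psi_{i}\to\bigoplus_{i\in I}^{p}\mathcal{B}_{i}$ is a bounded surjection of norm at most $1$. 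I would then take $\psi:=\widetilde{(A_{i})_{i\in I}}^{*}\circ\mathbf{s}:\mathcal{X}\to(\bigoplus_{i\in I}^{p}\Psi_{i})^{*}$, let $A$ be the RKBS map it induces, and set $\mathcal{B}:=(\bigoplus_{i\in I}^{p}\Psi_{i},\psi,A)$; Theorem \ref{thm. RKBS Characterization} makes $\mathcal{B}$ an RKBS triple, and unwinding the definitions gives $A=\mathcal{S}\circ\widetilde{(A_{i})_{i\in I}}$ (equivalently $\psi(x)=\Phi((\psi_{i}(x))_{i\in I})$, so $\psi$ is the natural candidate feature map).

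Since $\widetilde{(A_{i})_{i\in I}}$ is surjective and $A=\mathcal{S}\circ\widetilde{(A_{i})_{i\in I}}$, we obtain $\operatorname{im}(A)=\operatorname{im}(\mathcal{S})=\sum_{i\in I}^{p}\mathcal{B}_{i}$ as subsets of $\mathbb{R}^{\mathcal{X}}$. For the norm, pushing a preimage $\nu\in A^{-1}(f)$ through the norm-decreasing map $\widetilde{(A_{i})_{i\in I}}$ gives $\lVert f\rVert_{\mathcal{B}}\ge\lVert f\rVert_{\sum_{i\in I}^{p}\mathcal{B}_{i}}$; the reverse inequality follows once one knows that $\widetilde{(A_{i})_{i\in I}}$ is a \emph{metric} surjection, i.e. $\inf\{\lVert\nu\rVert:\widetilde{(A_{i})_{i\in I}}\nu=g\}=\lVert g\rVert$ for all $g\in\bigoplus_{i\in I}^{p}\mathcal{B}_{i}$, since then every $g\in\mathcal{S}^{-1}(f)$ lifts to some $\nu\in A^{-1}(f)$ with $\lVert\nu\rVert\le\lVert g\rVert+\varepsilon$. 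Through the isometric isomorphism $\widetilde{(\hat{A}_{i})_{i\in I}}$, this metric-surjectivity claim reduces to the same statement for $\widetilde{(\pi_{i})_{i\in I}}$, which is precisely the coordinatewise selection carried out in the proof of Lemma \ref{lem. for compatibility}(1), run with a summable error family (e.g. $\varepsilon/2^{k}$ in place of $1/k^{2}$) so that the total error is arbitrarily small. Hence the identity map is an isometric isomorphism and $\mathcal{B}\underset{\mathcal{B}}{\equiv}\sum_{i\in I}^{p}\mathcal{B}_{i}$, proving (1).

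For part (2), given $\mathcal{B}=(\bigoplus_{i\in I}^{p}\Psi_{i},\psi,A)$, let $\iota_{j}:\Psi_{j}\hookrightarrow\bigoplus_{i\in I}^{p}\Psi_{i}$ denote the canonical isometric inclusion and put $\psi_{j}:=\iota_{j}^{*}\circ\psi:\mathcal{X}\to\Psi_{j}^{*}$, so that $(\psi_{j}(x))_{j\in I}=\Phi^{-1}(\psi(x))\in\bigoplus_{j\in I}^{q}\Psi_{j}^{*}$ with $\lVert(\psi_{j}(x))_{j\in I}\rVert=\lVert\psi(x)\rVert<\infty$ for every $x$. Let $A_{j}$ be the RKBS map induced by $\psi_{j}$; by Theorem \ref{thm. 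RKBS Characterization} each $\mathcal{B}_{j}:=(\Psi_{j},\psi_{j},A_{j})$ is an RKBS triple, and since $\lVert ev_{x}^{j}\rVert_{\mathcal{B}_{j}^{*}}\le\lVert\psi_{j}(x)\rVert_{\Psi_{j}^{*}}$ (Remark \ref{remark. inf sum condition relate feature map}) and $\sum_{j}\lVert\psi_{j}(x)\rVert_{\Psi_{j}^{*}}^{q}<\infty$, the hypothesis of Proposition \ref{prop. infinite sum of RKBSs} holds, so $\sum_{j\in I}^{p}\mathcal{B}_{j}$ is defined. Applying part (1) to $\{\mathcal{B}_{j}\}_{j\in I}$ yields an RKBS triple $(\bigoplus_{i\in I}^{p}\Psi_{i},\psi',A')$ with $(\bigoplus_{i\in I}^{p}\Psi_{i},\psi',A')\underset{\mathcal{B}}{\equiv}\sum_{j\in I}^{p}\mathcal{B}_{j}$; a direct computation using the defining formula $\langle\psi(x),(\nu_{j})_{j\in I}\rangle=\sum_{j}\langle\psi_{j}(x),\nu_{j}\rangle$ for $\Phi$ then shows $A'=A$. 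Consequently $\mathcal{B}$ and $(\bigoplus_{i\in I}^{p}\Psi_{i},\psi',A')$ have the same image in $\mathbb{R}^{\mathcal{X}}$ and, by Theorem \ref{thm. RKBS Characterization}, the same norm, whence $\mathcal{B}\underset{\mathcal{B}}{\equiv}(\bigoplus_{i\in I}^{p}\Psi_{i},\psi',A')\underset{\mathcal{B}}{\equiv}\sum_{j\in I}^{p}\mathcal{B}_{j}$.

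The only genuinely nontrivial step is the metric surjectivity of $\widetilde{(A_{i})_{i\in I}}$ invoked in part (1): Lemma \ref{lem. for compatibility}(1) as stated asserts only boundedness and surjectivity, which would give $\mathcal{B}\cong\sum_{i\in I}^{p}\mathcal{B}_{i}$ only up to equivalence of norms, whereas $\underset{\mathcal{B}}{\equiv}$ requires an isometry. Upgrading to the isometric statement is exactly where the diagonal $\varepsilon$-selection, with a summable (hence negligible) total error, is needed. The remainder is bookkeeping: keeping the identifications $\Phi$, the factorizations $A_{i}=\hat{A}_{i}\circ\pi_{i}$, and the inclusions $\iota_{j}$ straight, and checking that the explicitly defined maps coincide as functions.
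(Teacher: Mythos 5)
Your proposal is correct and follows essentially the same route as the paper: you factor $A=\mathcal{S}\circ\widetilde{(A_{i})_{i\in I}}$ through the coordinatewise quotient and isometry maps of Lemma \ref{lem. for compatibility}, identify the images, and establish the norm equality by an $\varepsilon$-selection with summable errors, which is exactly the content of the paper's equation \eqref{eq. compatiblity norm comparable} (your ``metric surjectivity'' of $\widetilde{(\pi_{i})_{i\in I}}$ is the same argument, with the zero-infimum coordinates handled by closedness of the kernels just as in the paper). Your only organizational departures — isolating the isometric lifting as a strengthened Lemma \ref{lem. for compatibility}(1) and deducing part (2) from part (1) rather than repeating the computation — are harmless repackagings of the paper's proof.
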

\begin{figure}[ht]
\centering
\begin{tikzcd}[sep=huge]
    \bigoplus_{i\in I}^{p}\Psi_{i} \arrow[r, "\widetilde{(\pi_{i})_{i\in I}}"] \arrow[dr, "A"']
    &  \bigoplus_{i\in I}^{p}\Psi_{i}/\ker{A_{i}}
    \arrow[r, "\widetilde{(\hat{A}_{i})_{i\in I}}"]
    &
    \bigoplus_{i\in I}^{p}\mathcal{B}_{i}
    \arrow[dl, "\mathcal{S}"]
    \\
    &
    \mathbb{R}^{\mathcal{X}}
\end{tikzcd}
\caption{Commutative diagram for the compatibility}
\label{fig: Commutative diagram for the compatibility}
\end{figure}
The diagram above intuitively illustrates the result we aim to demonstrate in Proposition \ref{prop. compatibility}. Detailed information about each map can be found in the proof.

\begin{proof}
    From Lemma \ref{lem. for compatibility}, we know that there is a surjective bounded linear operator $\widetilde{(\pi_{i})_{i\in I}}: \bigoplus_{i \in I}^{p}\Psi_{i} \rightarrow \bigoplus_{i\in I}^{p}\Psi_{i}/\ker{A_{i}}$ and an isometric isomorphism $\widetilde{(\hat{A}_{i})_{i\in I}}:\bigoplus_{i\in I}^{p}\Psi_{i}/\ker{A_{i}} \rightarrow \bigoplus_{i\in I}^{p}\mathcal{B}_{i}$. Let $\Phi:\bigoplus_{i\in I}^{q}\mathcal{B}_{i}^{*}\rightarrow \left(\bigoplus_{i\in I}^{p}\mathcal{B}_{i}\right)^{*}$ be the isometric isomorphism defined in \eqref{eq. Duality of direct sum}.
    Since $(ev_{x}^{i})_{i\in I}\in \bigoplus_{i\in I}^{q}\mathcal{B}_{i}^{*}$ for all $x \in \mathcal{X}$, we can apply Proposition \ref{prop. infinite sum of RKBSs} to deduce that there is an RKBS triple for the summation of RKBSs $\sum_{i\in I}^{p}\mathcal{B}_{i} = (\bigoplus_{i\in I}^{p}\mathcal{B}_{i}, \mathbf{s}, \mathcal{S})$. Consider the map $A:= \mathcal{S}\circ \widetilde{(\hat{A}_{i})_{i\in I}}\circ\widetilde{(\pi_{i})_{i\in I}}
    =
    \mathcal{S}\circ \widetilde{(A_{i})_{i\in I}}: \bigoplus_{i \in I}^{p}\Psi_{i} \rightarrow \mathbb{R}^{\mathcal{X}}$. To verify the map $A$ is indeed an RKBS map, we show the following holds
    \begin{align*}
        \left(A(\mu_{i})_{i\in I}\right)(x)
        =
        \left( \mathcal{S}\left(\widetilde{(A_{i})_{i\in I}} (\mu_{i})_{i\in I}\right)\right)(x)
        =
        \left<\Phi((ev_{x}^{i})_{i\in I}) \circ \widetilde{(A_{i})_{i\in I}}, (\mu_{i})_{i\in I}\right>
    \end{align*}
    for all $x \in \mathcal{X}$ and $(\mu_{i})_{i \in I} \in \bigoplus_{i \in I}^{p}\Psi_{i}$.
    Thus, if we define a feature map $\psi: \mathcal{X} \rightarrow \left(\bigoplus_{i \in I}^{p}\Psi_{i}\right)^{*}$ by $\psi(x)= \Phi((ev_{x}^{i})_{i\in I}) \circ \widetilde{(A_{i})_{i\in I}} \in \left(\bigoplus_{i \in I}^{p}\Psi_{i}\right)^{*}$ for $x \in \mathcal{X}$, then we get an RKBS triple $\mathcal{B}= (\bigoplus_{i\in I}^{p}\Psi_{i},\psi,A)$.
    Since $\widetilde{(\hat{A}_{i})_{i\in I}}\circ\widetilde{(\pi_{i})_{i\in I}}$ is surjective, $\operatorname{im}(A) = \operatorname{im}(\mathcal{S})$ in terms of set equality. Also we note that, by Theorem \ref{thm. RKBS Characterization}, $\mathcal{B} = \operatorname{im}(A)$ and $\sum_{i\in I}^{p}\mathcal{B}_{i} = \operatorname{im}(\mathcal{S})$ as sets.
    Since $\operatorname{im}(A)$ and $\operatorname{im}(\mathcal{S})$ both inherit the same algebraic structure from $\mathbb{R}^{\mathcal{X}}$, we can deduce that they are the same as vector space.
    The only remaining part of the proof is to show that for any $f \in \mathcal{B}$, $\lVert f \rVert_{\mathcal{B}} = \lVert f \rVert_{\sum_{i\in I}^{p}\mathcal{B}_{i}}$.

    To prove (2), suppose that we have the RKBS triple $\mathcal{B}= (\bigoplus_{i\in I}^{p}\Psi_{i},\psi,A)$. We denote $\Phi_{0}:\bigoplus_{i\in I}^{q}\Psi_{i}^{*}\rightarrow \left(\bigoplus_{i\in I}^{p}\Psi_{i}\right)^{*}$ as the isometric isomorphism defined in \eqref{eq. Duality of direct sum}. Since $\psi(x) \in \left(\bigoplus_{i \in I}^{p}\Psi_{i}\right)^{*}$ for all $x\in \mathcal{X}$, we know that 
    \begin{align}
        \Phi_{0}^{-1}\left(\psi(x)\right) \in \bigoplus_{i\in I}^{q}\Psi_{i}^{*}, 
        \quad \lVert \Phi_{0}^{-1}\left(\psi(x)\right)\rVert_{\bigoplus_{i\in I}^{q}\Psi_{i}^{*}} < \infty.
        \label{eq. proof for compatiblity (2)}
    \end{align}
    Now, we define for each $i \in I$, $\psi_{i}: \mathcal{X} \rightarrow \Psi_{i}^{*}$ by $\psi_{i}(x) = p_{i}(\Phi_{0}^{-1}(\psi(x)))$ for $x\in \mathcal{X}$, where $p_{i}$ is $i$-th canonical projection on $\prod_{i\in I}\Psi_{i}^{*}$. Then, for each $i\in I$, there is an RKBS map $A_{i}: \Psi_{i} \rightarrow \mathbb{R}^{\mathcal{X}}$ defined by $(A_{i}\mu_{i})(x)=\left<\psi_{i}(x),\mu_{i}\right>$ for $x\in \mathcal{X}$ and $\mu_{i} \in \Psi_{i}$.
    From Theorem \ref{thm. RKBS Characterization}, we can get a family of RKBS triples 
    $\{\mathcal{B}_{i}=(\Psi_{i},\psi_{i},A_{i})\}_{i\in I}$. 
    Let $\Phi:\bigoplus_{i\in I}^{q}\mathcal{B}_{i}^{*}\rightarrow \left(\bigoplus_{i\in I}^{p}\mathcal{B}_{i}\right)^{*}$ be the isometric isomorphism defined in \eqref{eq. Duality of direct sum}.
    By the above \eqref{eq. proof for compatiblity (2)}, we can deduce that $\left(\psi_{i}(x)\right)_{i\in I} \in \bigoplus_{i\in I}^{q}\Psi_{i}^{*}$.
    Thus, Remark \ref{remark. inf sum condition relate feature map} implies the existence of an RKBS triple for the sum of RKBSs $\sum_{i\in I}^{p}\mathcal{B}_{i} = (\bigoplus_{i\in I}^{p}\mathcal{B}_{i}, \mathbf{s}, \mathcal{S})$.
    From the following series of equations, we can see that $A= \mathcal{S}\circ \widetilde{(\hat{A}_{i})_{i\in I}}\circ\widetilde{(\pi_{i})_{i\in I}}$. 
    For $x \in \mathcal{X}$ and $(\mu_{i})_{i\in I}\in \bigoplus_{i\in I}^{p}\Psi_{i}$, we have that
    \begin{align*}
        \left(A\left((\mu_{i})_{i \in I}\right)\right)(x) 
        &=
        \left<\psi(x), (\mu_{i})_{i \in I}\right>
        =
        \left<\Phi_{0}\left(\Phi_{0}^{-1}(\psi(x))\right), (\mu_{i})_{i \in I}\right>
        =
        \sum_{i \in I}\left<p_{i}(\Phi_{0}^{-1}(\psi(x))),\mu_{i}\right>
        \\
        &=
        \sum_{i\in I}\left<\psi_{i}(x),\mu_{i}\right>
        =
        \left<\Phi((ev_{x}^{i})_{i\in I}),(A_{i}\mu_{i})_{i \in I}\right>
        =
        \left(\mathcal{S}\left((A_{i}\mu_{i})_{i \in I}\right)\right)(x)
        \\
        &=
        \left(\mathcal{S}\left(\widetilde{(A_{i})_{i\in I}}((\mu_{i})_{i \in I})\right)\right)(x)
        =
        \left(\left(\mathcal{S}\circ \widetilde{(\hat{A}_{i})_{i\in I}}\circ\widetilde{(\pi_{i})_{i\in I}}\right)((\mu_{i})_{i \in I})\right)(x).
    \end{align*}
    For similar reasons as the previous case, we only need to prove that for any $f \in \mathcal{B}$, $\lVert f\rVert_{\mathcal{B}} = \lVert f \rVert_{\sum_{i \in I}^{p}\mathcal{B}_{i}}$.

    We start by exploring the definition of each norm.
    The norm on the RKBS $\sum_{i \in I}^{p}\mathcal{B}_{i}$ is given by
    \begin{align*}
        \lVert f \rVert_{\sum_{i \in I}^{p}\mathcal{B}_{i}}^{p}
        &=
        \inf\left\{ \lVert(f_{i})_{i \in I}\rVert_{\bigoplus_{i\in I}^{p}\mathcal{B}_{i}}^{p}: (f_{i})_{i\in I} \in \mathcal{S}^{-1}(f) \right\}
        \\
        &=
        \inf\left\{ \sum_{i \in I}\inf\left\{\lVert \mu_{i}\rVert_{\Psi_{i}}^{p}: \mu_{i} \in A_{i}^{-1}(f_{i})\right\}: (f_{i})_{i\in I} \in \mathcal{S}^{-1}(f) \right\}
    \end{align*}
    
    for $f \in \sum_{i \in I}^{p}\mathcal{B}_{i}$.
    The norm on the RKBS $\mathcal{B}$ is given by
    \begin{align*}
        \lVert f \rVert_{\mathcal{B}}^{p}
        &=
        \inf\left\{ \lVert(\mu_{i})_{i \in I}\rVert_{\bigoplus_{i\in I}^{p}\Psi_{i}}^{p}: (\mu_{i})_{i\in I} \in A^{-1}(f) \right\}
        =
        \inf\left\lVert A^{-1}(f)\right\rVert_{\bigoplus_{i\in I}^{p}\Psi_{i}}^{p}
        \\
        &=
        \inf\left\lVert \widetilde{(A_{i})_{i\in I}}^{-1}\circ \mathcal{S}^{-1}(f)\right\rVert_{\bigoplus_{i\in I}^{p}\Psi_{i}}^{p}
        =
        \inf\left\lVert \bigcup_{(f_{i})_{i\in I}\in \mathcal{S}^{-1}(f)}\widetilde{(A_{i})_{i\in I}}^{-1}\left((f_{i})_{i\in I}\right)\right\rVert_{\bigoplus_{i\in I}^{p}\Psi_{i}}^{p}
        \\
        &=
        \inf\bigcup_{(f_{i})_{i\in I}\in \mathcal{S}^{-1}(f)}\left\lVert \widetilde{(A_{i})_{i\in I}}^{-1}\left((f_{i})_{i\in I}\right)\right\rVert_{\bigoplus_{i\in I}^{p}\Psi_{i}}^{p}
        \displaybreak\\
        &=
        \inf\left\{ \inf\left\lVert \widetilde{(A_{i})_{i\in I}}^{-1}\left((f_{i})_{i\in I}\right)\right\rVert_{\bigoplus_{i\in I}^{p}\Psi_{i}}^{p} : (f_{i})_{i \in I} \in \mathcal{S}^{-1}(f) \right\}
        \\
        &=
        \inf\left\{ \inf\left\{\sum_{i\in I}\lVert\mu_{i}\rVert_{\Psi_{i}}^{p}: (\mu_{i})_{i\in I}\in \widetilde{(A_{i})_{i\in I}}^{-1}\left((f_{i})_{i\in I}\right) \right\}: (f_{i})_{i \in I} \in \mathcal{S}^{-1}(f) \right\}
    \end{align*}
    for $f \in \mathcal{B}$.
    If we denote the set $\left\{\sum_{i\in I}\lVert\mu_{i}\rVert_{\Psi_{i}}^{p}: (\mu_{i})_{i\in I}\in \widetilde{(A_{i})_{i\in I}}^{-1}\left((f_{i})_{i\in I}\right) \right\}$ by $\mathcal{C}$, then we conclude the proof by showing that:
    \begin{align} 
    \sum_{i \in I}\inf\left\{\lVert \mu_{i}\rVert_{\Psi_{i}}^{p}: \mu_{i} \in A_{i}^{-1}(f_{i})\right\}
    =
    \inf\mathcal{C}
    \label{eq. compatiblity norm comparable}
    \end{align}
    for all $(f_{i})_{i \in I} \in \mathcal{S}^{-1}(f)$.
    To show that $\sum_{i \in I}\inf\left\{\lVert \mu_{i}\rVert_{\Psi_{i}}^{p}: \mu_{i} \in A_{i}^{-1}(f_{i})\right\}$ is a lower bound for $\mathcal{C}$, we note that
        $(\mu_{i})_{i\in I} \in \widetilde{(A_{i})_{i\in I}}^{-1}\left((f_{i})_{i\in I}\right)$ is equivalent to
    \begin{align}
        (\mu_{i})_{i\in I} \in \bigoplus_{i \in I}^{p}\Psi_{i}  \text{ and }  \forall i\in I, A_{i}\mu_{i} = f_{i}.
        \label{eq. compatability (3).1}
    \end{align}

    Let $(\nu_{i})_{i\in I} \in \widetilde{(A_{i})_{i\in I}}^{-1}\left((f_{i})_{i\in I}\right)$. Then, by \eqref{eq. compatability (3).1}, we have that
    $\inf\left\{\lVert \mu_{i}\rVert_{\Psi_{i}}^{p}: \mu_{i} \in A_{i}^{-1}(f_{i})\right\} \le \lVert \nu_{i}\rVert_{\Psi_{i}}^{p}$ for all $i \in I$.
    Thus, we deduce that $\sum_{i \in I}\inf\left\{\lVert \mu_{i}\rVert_{\Psi_{i}}^{p}: \mu_{i} \in A_{i}^{-1}(f_{i})\right\} \le \sum_{i \in I}\lVert \nu_{i}\rVert_{\Psi_{i}}^{p}$.
    Now, we have to show that
    $\sum_{i \in I}\inf\left\{\lVert \mu_{i}\rVert_{\Psi_{i}}^{p}: \mu_{i} \in A_{i}^{-1}(f_{i})\right\}$ is the greatest lower bound of $\mathcal{C}$.
    Let $\mathbf{c}$ be an any lower bound of the set $\mathcal{C}$.
    Since we already assumed that $(f_{i})_{i \in I} \in \mathcal{S}^{-1}(f)$, the norm of $(f_{i})_{i\in I}$ in $\bigoplus_{i\in I}^{p}\mathcal{B}_{i}$ is finite. That is, we know that
    $\sum_{i \in I}\inf\left\{\lVert \mu_{i}\rVert_{\Psi_{i}}^{p}: \mu_{i} \in A_{i}^{-1}(f_{i})\right\} = \sum_{i \in I}\lVert f_{i}\rVert_{\mathcal{B}_{i}}< \infty$.
    We denote the set $\left\{ i\in I: \inf\left\{\lVert \mu_{i}\rVert_{\Psi_{i}}^{p}: \mu_{i} \in A_{i}^{-1}(f_{i})\right\}\neq 0 \right\}$ by $H$. Then,
    for $i \in I\setminus H$, $\inf\left\{\lVert \mu_{i}\rVert_{\Psi_{i}}^{p}: \mu_{i} \in A_{i}^{-1}(f_{i})\right\} = 0$.
    Hence, there is a sequence $\{\nu_{i}^{n}\}_{n \in \mathbb{N}} \in A_{i}^{-1}(f_{i})$, so that $\lVert \nu_{i}^{n} - 0 \rVert_{\Psi_{i}}^{p} \rightarrow 0$ as $n \rightarrow \infty$. Furthermore, since $A_{i}^{-1}(f_{i})$ is a translation of $\ker{A_{i}}$, by \eqref{eq. Characterziation thm kerA closed}$, A_{i}^{-1}(f_{i})$ is a closed subset in $\Psi_{i}$. Therefore, we deduce that
    \begin{align}
        0 \in A_{i}^{-1}(f_{i}) \text{ for all }i \in I\setminus H
        \label{eq. compatability (3).2}
    \end{align} For the case of $H$, note that $H$ is a countable subset of $I$. Accordingly, we may take a reordering bijection $g:\mathbb{N} \rightarrow H$.
    By simply using the definition of the infimum, for any $1>\epsilon>0$ and for any $g(n) \in H$, there is a $\nu_{g(n)}\in A_{g(n)}^{-1}(f_{g(n)})$ such that
    \begin{align}
        \inf\left\{ \lVert\mu_{g(n)}\rVert_{\Psi_{g(n)}}^{p}: \mu_{g(n)} \in A_{g(n)}^{-1}(f_{g(n)})\right\} + \frac{1}{4}\cdot\frac{1}{2^{n}}\cdot\epsilon 
        >
        \lVert\nu_{g(n)}\rVert_{\Psi_{g(n)}}^{p}.
        \label{eq. compatability (3).3}
    \end{align}
    Combining the above results, we obtain the following:
    \begin{align*}
        \sum_{i \in I}\inf\left\{\lVert \mu_{i}\rVert_{\Psi_{i}}^{p}: \mu_{i} \in A_{i}^{-1}(f_{i})\right\} + \epsilon
        &>
        \sum_{i \in I}\inf\left\{\lVert \mu_{i}\rVert_{\Psi_{i}}^{p}: \mu_{i} \in A_{i}^{-1}(f_{i})\right\} + \sum_{n=1}^{\infty}\frac{1}{4}\cdot\frac{1}{2^{n}}\cdot\epsilon
        \\
        &=
        \sum_{i \in H}\inf\left\{\lVert \mu_{i}\rVert_{\Psi_{i}}^{p}: \mu_{i} \in A_{i}^{-1}(f_{i})\right\} + \sum_{n=1}^{\infty}\frac{1}{4}\cdot\frac{1}{2^{n}}\cdot\epsilon
        \\
        &= \sum_{n=1}^{\infty}\left(\inf\left\{ \lVert\mu_{g(n)}\rVert_{\Psi_{g(n)}}^{p}: \mu_{g(n)} \in A_{g(n)}^{-1}(f_{g(n)})\right\} + \frac{1}{4}\cdot\frac{1}{2^{n}}\cdot\epsilon\right)
        \\
        &>
        \sum_{n=1}^{\infty}\lVert\nu_{g(n)}\rVert_{\Psi_{g(n)}}^{p}
        =
        \sum_{i \in H}\lVert \nu_{i}\rVert_{\Psi_{i}}^{p}.
    \end{align*}
    Define $\xi_{i}= 
    \begin{cases}
    \nu_{i} & \mbox{if }i\in H, \\
    0 & \mbox{if } i \in I\setminus H
    \end{cases}
    $.
    Then, by \eqref{eq. compatability (3).2} and \eqref{eq. compatability (3).3}, we know that for all $i \in I$, $\xi_{i} \in A_{i}^{-1}(f_{i})$. In addition, from the above inequalities, we also know
    that $\sum_{i\in I}\lVert \xi_{i}\rVert_{\Psi_{i}}^{p}
    \le
    \sum_{i\in I}\lVert f_{i}\rVert_{\mathcal{B}_{i}}^{p} +1 < \infty$. Thus, by \eqref{eq. compatability (3).1}, we deduce that 
    $(\xi_{i})_{i\in I} \in \widetilde{(A_{i})_{i\in I}}^{-1}\left((f_{i})_{i\in I}\right)$ (i.e., $\sum_{i\in I}\lVert \xi_{i}\rVert_{\Psi_{i}}^{p} \in \mathcal{C}$).
    Finally, the following show that $\sum_{i \in I}\inf\left\{\lVert \mu_{i}\rVert_{\Psi_{i}}^{p}: \mu_{i} \in A_{i}^{-1}(f_{i})\right\}$ is the greatest lower bound of $\mathcal{C}$:
    \begin{align*}
        \sum_{i \in I}\inf\left\{\lVert \mu_{i}\rVert_{\Psi_{i}}^{p}: \mu_{i} \in A_{i}^{-1}(f_{i})\right\} + \epsilon
        > 
        \sum_{i \in H}\lVert \nu_{i}\rVert_{\Psi_{i}}^{p}
        =
        \sum_{i\in I}\lVert \xi_{i}\rVert_{\Psi_{i}}^{p}
        \ge \mathbf{c} \quad\text{  for all }1>\epsilon>0,
    \end{align*}
    where $\mathbf{c}$ is a lower bound of the set $\mathcal{C}$.
\end{proof}

\subsection{Decomposition of one-layer neural networks}
The following lemma shows that if there is an isometrically isomorphic feature space, then we can construct an isometrically isomorphic RKBS. Although one can constructively show this by defining a linear map between the quotient spaces induced by a linear map between the original vector spaces (see Exercise 2.4 of \citet{treves2016topological}), the following result readily follows from the properties of RKBS.

\begin{lemma}
    Let $\Psi_{1}$ be a Banach space and let $\mathcal{B}_{2}=(\Psi_{2},\psi_{2},A_{2})$ be an RKBS triple. If there is an isometric isomorphism $\xi: \Psi_{1} \rightarrow \Psi_{2}$, then there is an RKBS triple $\mathcal{B}_{1} = (\Psi_{1},\psi_{1},A_{1})$ such that 
    \begin{enumerate}
        \item $\Psi_{1}/\ker{A_{1}} \underset{\mathcal{B}}{\cong} \Psi_{2}/\ker{A_{2}}$, 
        \item $\mathcal{B}_{1} \underset{\mathcal{B}}{\equiv} \mathcal{B}_{2}$.
    \end{enumerate}
    \label{lem. using lee 4.2.2}
\end{lemma}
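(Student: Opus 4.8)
The plan is to pull the feature map $\psi_{2}$ back through the adjoint of $\xi$, producing a feature map on $\Psi_{1}$ whose associated RKBS map factors through $\xi$; then everything reduces to the bookkeeping that an isometric isomorphism preserves kernels, quotients, preimages, and norms.

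Concretely, I would set $\psi_{1} := \xi^{*}\circ\psi_{2}:\mathcal{X}\rightarrow\Psi_{1}^{*}$, where $\xi^{*}:\Psi_{2}^{*}\rightarrow\Psi_{1}^{*}$ is the dual operator of $\xi$. Since $\psi_{1}(x)\in\Psi_{1}^{*}$ for every $x\in\mathcal{X}$, Theorem \ref{thm. RKBS Characterization} yields an RKBS triple $\mathcal{B}_{1}=(\Psi_{1},\psi_{1},A_{1})$, where $A_{1}:\Psi_{1}\rightarrow\mathbb{R}^{\mathcal{X}}$ is the induced linear map $(A_{1}\nu)(x)=\left<\psi_{1}(x),\nu\right>$. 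The one–line pairing computation $\left<\psi_{1}(x),\nu\right>=\left<\xi^{*}\psi_{2}(x),\nu\right>=\left<\psi_{2}(x),\xi\nu\right>=(A_{2}(\xi\nu))(x)$ shows that $A_{1}=A_{2}\circ\xi$. This factorization is the engine of the whole argument.

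For part (1): since $\xi$ is a linear bijection, $\ker A_{1}=\xi^{-1}(\ker A_{2})$, and as $\xi$ is an isometry it carries $\ker A_{1}$ onto $\ker A_{2}$; recalling from \eqref{eq. Characterziation thm kerA closed} that both $\ker A_{1}$ and $\ker A_{2}$ are closed, $\xi$ descends to a well-defined isometric isomorphism $\widehat{\xi}:\Psi_{1}/\ker A_{1}\rightarrow\Psi_{2}/\ker A_{2}$ (the standard passage of an isometric isomorphism to quotients by corresponding closed subspaces), which is exactly $\Psi_{1}/\ker A_{1}\underset{\mathcal{B}}{\cong}\Psi_{2}/\ker A_{2}$. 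For part (2): surjectivity of $\xi$ gives $\operatorname{im}(A_{1})=\operatorname{im}(A_{2}\circ\xi)=\operatorname{im}(A_{2})$, so $\mathcal{B}_{1}$ and $\mathcal{B}_{2}$ coincide as subsets of $\mathbb{R}^{\mathcal{X}}$ and inherit the same algebraic structure from it; and since $A_{1}^{-1}(f)=\xi^{-1}(A_{2}^{-1}(f))$ with $\lVert\xi^{-1}\mu\rVert_{\Psi_{1}}=\lVert\mu\rVert_{\Psi_{2}}$, we get
\begin{align*}
\lVert f\rVert_{\mathcal{B}_{1}}
=\inf_{\nu\in A_{1}^{-1}(f)}\lVert\nu\rVert_{\Psi_{1}}
=\inf_{\mu\in A_{2}^{-1}(f)}\lVert\xi^{-1}\mu\rVert_{\Psi_{1}}
=\inf_{\mu\in A_{2}^{-1}(f)}\lVert\mu\rVert_{\Psi_{2}}
=\lVert f\rVert_{\mathcal{B}_{2}},
\end{align*}
so the identity map is an isometric isomorphism, i.e. $\mathcal{B}_{1}\underset{\mathcal{B}}{\equiv}\mathcal{B}_{2}$.

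There is no serious obstacle here; the only points requiring care are verifying that the $A_{1}$ delivered by the characterization theorem is literally $A_{2}\circ\xi$ (the pairing identity above) and invoking \eqref{eq. Characterziation thm kerA closed} to know the kernels are closed so the quotient norms are genuine. After that the proof is routine manipulation of the isometry $\xi$, which is precisely why, as remarked before the statement, the result follows more cleanly from the RKBS formalism than from a direct transport-of-structure argument on quotient spaces.
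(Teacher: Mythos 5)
Your proposal is correct and follows essentially the same route as the paper: you define $\psi_{1}(x)=\psi_{2}(x)\circ\xi$ (written as $\xi^{*}\circ\psi_{2}$), derive the factorization $A_{1}=A_{2}\circ\xi$, and deduce both the set equality $\operatorname{im}(A_{1})=\operatorname{im}(A_{2})$ and the norm identity via $A_{1}^{-1}(f)=\xi^{-1}(A_{2}^{-1}(f))$, exactly as the paper does. Your treatment of part (1) is in fact slightly more explicit than the paper's, which leaves the descent of $\xi$ to the quotients implicit.
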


\begin{proof}
    Define a feature map $\psi_{1}: \mathcal{X} \rightarrow \Psi_{1}^{*}$ by $\psi_{1}(x) = \psi_{2}(x)\circ \xi$ for $x\in \mathcal{X}$ and a linear map $A_{1}: \Psi_{1} \rightarrow \mathbb{R}^{\mathcal{X}}$ by $\left(A_{1}\mu\right)(x) = <\psi_{1}(x), \mu> $ for $x\in \mathcal{X}$ and $\mu \in \Psi_{1}$. Then, we deduce that $A_{1} = A_{2}\circ \xi$ and $\mathcal{B}_{1} = (\Psi_{1},\psi_{1},A_{1})$ is an RKBS. Thus, we can see that $\mathcal{B}_{1} = \operatorname{im}(A_{1}) = \operatorname{im}(A_{2}\circ\xi)=\operatorname{im}(A_{2})=\mathcal{B}_{2}$.
    For any $f \in \mathcal{B}_{1}$, the following equalities hold:
    \begin{align*}
        \lVert f\rVert_{\mathcal{B}_{1}} 
        &= \inf\left\{\lVert \nu \rVert_{\Psi_{1}}: \nu \in A_{1}^{-1}(f)\right\}
        =\inf\left\{\lVert \nu \rVert_{\Psi_{1}}: \nu \in \xi^{-1}(A_{2}^{-1}(f)) \right\}
        \\
        &= \inf\left\{\lVert \xi(\nu) \rVert_{\Psi_{2}}: \nu \in \xi^{-1}(A_{2}^{-1}(f)) \right\}
        =
        \inf\left\{\lVert \xi(\nu) \rVert_{\Psi_{2}}: \xi(\nu) \in A_{2}^{-1}(f) \right\}
        \\
        &=
        \inf\left\{\lVert \mu \rVert_{\Psi_{2}}: \mu \in A_{2}^{-1}(f) \right\}
        =
        \lVert f \rVert_{\mathcal{B}_{2}}.
    \end{align*}
\end{proof}

Now, we introduce our main theorem. It state that the integral RKBS $\mathcal{F}_{\sigma}(\mathcal{X},\Omega)$ defined in Definition \ref{def. integral RKBS} can be decomposed into the sum of a family of p-norm RKBSs $\{\mathcal{L}_{\sigma}^{1}(\mu_{i})\}_{i\in I}$ defined in Definition \ref{def. p-norm RKBS}, where $\{\mu_{i}\}_{i\in I}$ is a maximal singular family in $P(\Omega)$.
\begin{theorem}
    Let $\{\mu_{i}\}_{i \in I}$ be a maximal singular family in $P(\Omega)$.
    Then, we have the following:
    \begin{align*}
        \mathcal{F}_{\sigma}(\mathcal{X},\Omega) \underset{\mathcal{B}}{\equiv} \sum_{i \in I}\mathcal{L}_{\sigma}^{1}(\mu_{i}).
    \end{align*}
    
    \label{thm. decomposition of the integral RKBS}
\end{theorem}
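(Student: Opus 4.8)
The plan is to realize $\mathcal{F}_{\sigma}(\mathcal{X},\Omega)$ as an RKBS with feature space $\mathcal{M}(\Omega)$, to rewrite $\mathcal{M}(\Omega)$ as the vast $\ell^{1}$-direct sum $\bigoplus_{i\in I}^{1}L^{1}(\mu_{i})$ via the isometric isomorphism $\Theta$ of \eqref{eq. M(K) is vast L^1 sum}, and then to invoke the compatibility result (Proposition \ref{prop. compatibility}) together with the transport lemma (Lemma \ref{lem. using lee 4.2.2}) to identify the resulting RKBS with $\sum_{i\in I}\mathcal{L}_{\sigma}^{1}(\mu_{i})$. Concretely, I first form the family of RKBS triples $\{\mathcal{L}_{\sigma}^{1}(\mu_{i})=(L^{1}(\mu_{i}),\psi_{i},A_{i})\}_{i\in I}$ from Definition \ref{def. p-norm RKBS} with $p=1$, $q=\infty$; this is legitimate because each $\mu_{i}\in P(\Omega)$ is a finite, hence $\sigma$-finite, measure, so the isometric isomorphism $\Xi\colon L^{\infty}(\mu_{i})\to L^{1}(\mu_{i})^{*}$ of \eqref{eq. isometric isom dual of L^p and L^q} is available.

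Next I verify the hypothesis of Proposition \ref{prop. infinite sum of RKBSs} (equivalently Remark \ref{remark. inf sum condition relate feature map}) in the case $q=\infty$: since $\lVert\psi_{i}(x)\rVert_{L^{1}(\mu_{i})^{*}}=\lVert\sigma(x,\cdot)\rVert_{L^{\infty}(\mu_{i})}\le\sup_{w\in\Omega}\lvert\sigma(x,w)\rvert<\infty$ uniformly in $i$, we get $(\psi_{i}(x))_{i\in I}\in\bigoplus_{i\in I}^{\infty}\Psi_{i}^{*}$, and hence $(ev_{x}^{i})_{i\in I}\in\bigoplus_{i\in I}^{\infty}\mathcal{B}_{i}^{*}$, for every $x\in\mathcal{X}$. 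Thus $\sum_{i\in I}\mathcal{L}_{\sigma}^{1}(\mu_{i})$ is well-defined, and Proposition \ref{prop. compatibility}(1) produces an RKBS triple $\mathcal{B}=(\bigoplus_{i\in I}^{1}L^{1}(\mu_{i}),\psi,A)$ with $\mathcal{B}\underset{\mathcal{B}}{\equiv}\sum_{i\in I}\mathcal{L}_{\sigma}^{1}(\mu_{i})$.

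Now I apply Lemma \ref{lem. using lee 4.2.2} with the isometric isomorphism $\Theta^{-1}\colon\mathcal{M}(\Omega)\to\bigoplus_{i\in I}^{1}L^{1}(\mu_{i})$: this yields an RKBS triple $\mathcal{B}_{1}=(\mathcal{M}(\Omega),\psi_{1},A_{1})$ with $\mathcal{B}_{1}\underset{\mathcal{B}}{\equiv}\mathcal{B}$, where, by the construction in that lemma, $A_{1}=A\circ\Theta^{-1}$. The crux is to show that $A_{1}$ coincides with the RKBS map $A_{\mathcal{F}_{\sigma}}$ of $\mathcal{F}_{\sigma}(\mathcal{X},\Omega)$, i.e.\ that $(A_{1}\mu)(x)=\int_{\Omega}\sigma(x,w)\,d\mu(w)$ for all $\mu\in\mathcal{M}(\Omega)$ and $x\in\mathcal{X}$. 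Writing $\Theta^{-1}(\mu)=(h_{i})_{i\in I}$, the definition of $\Theta$ gives $\mu={}_{\mathcal{M}(\Omega)}{\sum}_{i\in I}\rho_{i}$ with $d\rho_{i}=h_{i}\,d\mu_{i}$, while unwinding the maps of Figure \ref{fig: Commutative diagram for the compatibility} gives $(A_{1}\mu)(x)=\bigl(\mathcal{S}\bigl((A_{i}h_{i})_{i\in I}\bigr)\bigr)(x)=\sum_{i\in I}\int_{\Omega}\sigma(x,w)h_{i}(w)\,d\mu_{i}(w)=\sum_{i\in I}\int_{\Omega}\sigma(x,\cdot)\,d\rho_{i}$. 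Since $\Lambda(\sigma(x,\cdot))\in\mathcal{M}(\Omega)^{*}$ is a norm-continuous linear functional, it preserves the summable family $(\rho_{i})_{i\in I}$ and commutes with its sum, so $\sum_{i\in I}\int_{\Omega}\sigma(x,\cdot)\,d\rho_{i}=\int_{\Omega}\sigma(x,\cdot)\,d\mu$. Because the underlying set and the norm of the RKBS obtained from Theorem \ref{thm. RKBS Characterization} depend only on the feature space and the RKBS map, the equality $A_{1}=A_{\mathcal{F}_{\sigma}}$ together with the common feature space $\mathcal{M}(\Omega)$ forces $\mathcal{B}_{1}\underset{\mathcal{B}}{\equiv}\mathcal{F}_{\sigma}(\mathcal{X},\Omega)$. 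Chaining $\mathcal{F}_{\sigma}(\mathcal{X},\Omega)\underset{\mathcal{B}}{\equiv}\mathcal{B}_{1}\underset{\mathcal{B}}{\equiv}\mathcal{B}\underset{\mathcal{B}}{\equiv}\sum_{i\in I}\mathcal{L}_{\sigma}^{1}(\mu_{i})$ finishes the proof.

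As for difficulty, the genuinely substantive input is the measure-theoretic decomposition $\Theta$ of \eqref{eq. M(K) is vast L^1 sum} (maximal singular families, the vast $\ell^{1}$-sum of $L^{1}$-spaces), which is quoted from the literature; given that, the only step requiring care is the identification $A_{1}=A_{\mathcal{F}_{\sigma}}$ above, where one must be attentive that $\sum_{i}\rho_{i}$ is a summable family in the total-variation norm and that pushing $\Lambda(\sigma(x,\cdot))$ through it is legitimate precisely because that functional is norm-continuous (equivalently, $\sigma(x,\cdot)$ is bounded and continuous on the compact set $\Omega$). Everything else is bookkeeping with the already-established Proposition \ref{prop. compatibility} and Lemma \ref{lem. using lee 4.2.2}.
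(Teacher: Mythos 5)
Your proposal is correct and uses exactly the same ingredients as the paper's proof — the isometric isomorphism $\Theta$ of \eqref{eq. M(K) is vast L^1 sum}, Lemma \ref{lem. using lee 4.2.2}, Proposition \ref{prop. compatibility}, and the same final computation pushing the norm-continuous functional $\nu\mapsto\int_{\Omega}\sigma(x,\cdot)\,d\nu$ through the summable family $(\rho_{i})_{i\in I}$ — only assembled in the reverse direction (you build the $\mathcal{L}_{\sigma}^{1}(\mu_{i})$ first and apply Compatibility part (1), whereas the paper starts from $\mathcal{F}_{\sigma}(\mathcal{X},\Omega)$ and applies part (2), verifying the identification at the level of the individual feature maps $\overline{\psi}_{i}$ rather than of the global map $A_{1}$). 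The only blemish is notational: the functional you call $\Lambda(\sigma(x,\cdot))$ is, in the paper's conventions, $\Lambda^{*}(\iota(\sigma(x,\cdot)))\in\mathcal{M}(\Omega)^{*}$, since $\Lambda$ itself maps measures to functionals on $C(\Omega)$.
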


\begin{proof}
    By Definition \ref{def. integral RKBS}, there exists a map $\psi: \mathcal{X} \rightarrow M(\Omega)^{*}$ defined by $\psi(x) = \Lambda^{*}(\iota(\sigma(x,\cdot)))$ for $x\in \mathcal{X}$. Moreover, there is an RKBS map $A: M(\Omega) \rightarrow \mathbb{R}^{\mathcal{X}}$ defined by $(A(\mu))(x) = <\psi(x),\mu>$ for all $x \in \mathcal{X}$ and $\mu \in M(\Omega)$ such that
    \begin{align*}
        \mathcal{F}_{\sigma}(\mathcal{X},\Omega) \underset{\mathcal{B}}{\cong} M(\Omega)/\ker{A}.
    \end{align*}
    Let $\Theta:\bigoplus_{i\in I}^{1}L^{1}(\mu_{i}) \rightarrow M(\Omega)$ be the isometric isomorphism defined in \eqref{eq. M(K) is vast L^1 sum}.
    Define a map $\overline{\psi}:\mathcal{X} \rightarrow
    \left(\bigoplus_{i\in I}^{1}L^{1}(\mu_{i})\right)^{*}$ by $\overline{\psi}(x) = \psi(x)\circ \Theta$. And consider a map $\overline{A}:\bigoplus_{i\in I}^{1}L^{1}(\mu_{i}) \rightarrow \mathbb{R}^{\mathcal{X}}$ defined by $ \overline{A} = A\circ \Theta$. If we denote  $\overline{\mathcal{B}}$ by $(\bigoplus_{i\in I}^{1}L^{1}(\mu_{i}),\overline{\psi},\overline{A})$, then, by Lemma \ref{lem. using lee 4.2.2}, we have
    \begin{align*}
        M(\Omega)/\ker{A} \underset{\mathcal{B}}{\cong} \bigoplus_{i\in I}^{1}L^{1}(\mu_{i})/\ker{\overline{A}} \quad \text{and} \quad
        \mathcal{F}_{\sigma}(\mathcal{X},\Omega) \underset{\mathcal{B}}{\equiv} \overline{\mathcal{B}}.
    \end{align*}
    Now, let $\Phi_{0}: \bigoplus_{i\in I}^{\infty}\left(L^{1}(\mu_{i})\right)^{*} \rightarrow \left(\bigoplus_{i\in I}^{1}L^{1}(\mu_{i})\right)^{*}$ be the isometric isomorphism defined in \eqref{eq. Duality of direct sum}. For each $i\in I$, if we define a map $\overline{\psi}_{i}:\mathcal{X} \rightarrow \left(L^{1}(\mu_{i})\right)^{*}$ by $\overline{\psi}_{i}(x) = p_{i}\left(\Phi_{0}^{-1}\left(\overline{\psi}(x)\right)\right)$ for $x \in \mathcal{X}$ and define a map $\overline{A_{i}}: L^{1}(\mu_{i}) \rightarrow \mathbb{R}^{\mathcal{X}}$ by $\left(\overline{A_{i}}(h)\right)(x) = \left<\overline{\psi}_{i}(x),h\right>$ for $x\in \mathcal{X}$ and $h \in L^{1}(\mu_{i})$, then by Proposition \ref{prop. compatibility}, we can deduce that
    \begin{align*}
        \bigoplus_{i\in I}^{1}L^{1}(\mu_{i})/\ker{\overline{A}} \underset{\mathcal{B}}{\cong} \sum_{i \in I}\mathcal{B}_{i} \quad \text{and} \quad
        \mathcal{F}_{\sigma}(\mathcal{X},\Omega) \underset{\mathcal{B}}{\equiv} \sum_{i \in I}\mathcal{B}_{i} 
    \end{align*}
    where $\mathcal{B}_{i} = (L^{1}(\mu_{i}), \overline{\psi}_{i},\overline{A}_{i})$ for all $i \in I$. We want to show that $\mathcal{B}_{i}$ is indeed $\mathcal{L}_{\sigma}(\mu_{i})$ for all $i\in I$.
    Suppose for each $i \in I$, $\Xi^{i}:L^{\infty}(\mu_{i}) \rightarrow \left(L^{1}(\mu_{i})\right)^{*}$ is the isometric isomorphism introduced in \eqref{eq. isometric isom dual of L^p and L^q}. 
    According to Definition \ref{def. p-norm RKBS}, it suffices to verify that $\overline{\psi}_{i}(x) = \Xi^{i}(\sigma(x,\cdot))$ for all $x \in \mathcal{X}$ and $i \in I$. This condition is equivalent to $\overline{\psi}(x) 
    =
    \Phi_{0}\left(\left(\Xi^{i}(\sigma(x,\cdot))\right)_{i\in I}\right)$ for all $x\in \mathcal{X}$.
    Hence, we want to prove the following holds: $\left(\Lambda^{*}\left(\iota\left(\sigma(x,\cdot)\right)\right)\circ \Theta\right)\left((f_{i})_{i\in I}\right)
    =
    \Phi_{0}\left((\Xi^{i}(\sigma(x,\cdot)))_{i\in I}\right)\left((f_{i})_{i\in I}\right)$ for all $x\in \mathcal{X}$ and $(f_{i})_{i\in I} \in \bigoplus_{i\in I}^{1}L^{1}(\mu_{i})$.
    First, for the left-hand side, we have:
    \begin{align*}
        &\left(\Lambda^{*}\left(\iota(\sigma(x,\cdot))\right)\circ \Theta\right)\left((f_{i})_{i\in I}\right)
        =
        \left<\Lambda^{*}\left(\iota(\sigma(x,\cdot))\right), _{\mathcal{M}(K)}{\sum}_{i\in I}\rho_{i}\right>
        \\
        &=
        \left<\iota(\sigma(x,\cdot))\circ \Lambda, _{\mathcal{M}(K)}{\sum}_{i\in I}\rho_{i}\right>
        =
        \sum_{i\in I}\left<\iota(\sigma(x,\cdot))\circ \Lambda, \rho_{i}\right>
        \\
        &=
        \sum_{i\in I}\left<\iota(\sigma(x,\cdot)), \Lambda(\rho_{i})\right>
        =
        \sum_{i\in I}\left<\Lambda(\rho_{i}),\sigma(x,\cdot) \right>
        \\
        &=
        \sum_{i\in I}\int_{\Omega}\sigma(x,w)d\rho_{i}(w)
        =
        \sum_{i\in I}\int_{\Omega}\sigma(x,w)f_{i}(w)d\mu_{i}(w).
    \end{align*}
    Next, for the right-hand side, we have: 
    \begin{align*}
        \Phi_{0}\left((\Xi^{i}(\sigma(x,\cdot)))_{i\in I}\right)\left((f_{i})_{i\in I}\right)
        =
        \sum_{i\in I}\left<\Xi^{i}(\sigma(x,\cdot)),f_{i}\right> = \sum_{i\in I}\int_{\Omega}\sigma(x,w)f_{i}(w)d\mu_{i}(w).
    \end{align*}
\end{proof}

\begin{remark}\label{remark.RKBS set equality}
    Since $\Omega$ is a compact metric space in our setting, $\mathcal{L}_{\sigma}^{1}(\mu_{i})$ is a separable RKBS for all $i\in I$. Thus, we decompose the integral RKBS $\mathcal{F}_{\sigma}(\mathcal{X},\Omega)$ into infinitely many separable RKBSs.
\end{remark}

\section{Application}
\subsection{Inclusion relations}

Let $\{\mu_{i}\}_{i\in [n]}$ be any finite family  in $P(\Omega)$. Since for each $i\in [n]$, $k_{i}(x,y)= \int_{\Omega}\sigma(x,w)\sigma(y,w)d\mu_{i}$ for $(x,y)\in \mathcal{X}\times\mathcal{X}$ is the reproducing kernel of $\mathcal{L}_{\sigma}^{2}(\mu_{i})$, the sum kernel $k(x,y)= \sum_{i=1}^{n}k_{i}(x,y)$ is the reproducing kernel of $\sum_{i\in [n]}^{2}\mathcal{L}_{\sigma}^{2}(\mu_{i})$ (The notation $\sum_{i\in [n]}^{2}$ refers to the case where we defined it in Proposition \ref{prop. infinite sum of RKBSs} with 
$p=2$ and 
$I=[n]$). In this setting, we can guarantee that 
$\sum_{i\in [n]}^{2}\mathcal{L}_{\sigma}^{2}(\mu_{i})$ is an RKHS. 
The following proposition shows that when we consider the finite singular family in $P(\Omega)$, the RKHS $\sum_{i\in [n]}^{2}\mathcal{L}_{\sigma}^{2}(\mu_{i})$ contained in the RKBS $\mathcal{F}_{\sigma}(\mathcal{X},\Omega)$ with the same associated function $\sigma$.

\begin{proposition}
    For any finite singular family $\{\mu_{i}\}_{i\in [n]}$ in $P(\Omega)$, we have 
    \begin{align*}
        \sum_{i\in [n]}^{2}\mathcal{L}_{\sigma}^{2}(\mu_{i}) \subset \mathcal{F}_{\sigma}(\mathcal{X},\Omega).    
    \end{align*}
    
    \label{prop. multiple kernel method contained in integral RKBS}
\end{proposition}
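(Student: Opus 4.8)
The plan is to argue directly from the integral representations, unwinding the definition of $\sum_{i\in[n]}^{2}$ from Proposition \ref{prop. infinite sum of RKBSs} (taken with $p=q=2$ and $I=[n]$) and using the observation, recorded after Definition \ref{def. p-norm RKBS}, that an $L^{2}$ density is also an $L^{1}$ density when the base measure is finite. Since $I=[n]$ is finite, the summability hypothesis of Proposition \ref{prop. infinite sum of RKBSs} holds automatically, $\bigoplus_{i\in[n]}^{2}\mathcal{L}_{\sigma}^{2}(\mu_i)$ equals the full product $\prod_{i=1}^{n}\mathcal{L}_{\sigma}^{2}(\mu_i)$ as a set, and the $\mathbb{R}^{\mathcal{X}}$-sum over finitely many indices reduces to pointwise addition; hence an arbitrary element of $\sum_{i\in[n]}^{2}\mathcal{L}_{\sigma}^{2}(\mu_i)$ can be written as $f=\sum_{i=1}^{n}f_i$ with $f_i\in\mathcal{L}_{\sigma}^{2}(\mu_i)$.

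First I would convert each summand into an integral against a Radon measure. Fix $i\in[n]$; by Definition \ref{def. p-norm RKBS} there is $h_i\in L^{2}(\mu_i)$ with $f_i(x)=\int_{\Omega}\sigma(x,w)h_i(w)\,d\mu_i(w)$ for all $x\in\mathcal{X}$. Because $\mu_i\in P(\Omega)$ is a probability (in particular finite) measure, Cauchy--Schwarz gives $\|h_i\|_{L^{1}(\mu_i)}\le\|h_i\|_{L^{2}(\mu_i)}<\infty$, so $h_i\in L^{1}(\mu_i)$. Then $\rho_i(B):=\int_{B}h_i\,d\mu_i$ defines a signed Borel measure on the compact metric space $\Omega$ with $\|\rho_i\|_{\mathcal{M}(\Omega)}=\|h_i\|_{L^{1}(\mu_i)}<\infty$, i.e.\ $\rho_i\in\mathcal{M}(\Omega)$, and $f_i(x)=\int_{\Omega}\sigma(x,w)\,d\rho_i(w)$.

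Next I would assemble the pieces: put $\rho:=\sum_{i=1}^{n}\rho_i$, a finite sum in the Banach space $\mathcal{M}(\Omega)$ and hence an element of $\mathcal{M}(\Omega)$. By linearity of the integral in the measure variable,
\begin{align*}
    f(x)=\sum_{i=1}^{n}f_i(x)=\sum_{i=1}^{n}\int_{\Omega}\sigma(x,w)\,d\rho_i(w)=\int_{\Omega}\sigma(x,w)\,d\rho(w)\qquad(x\in\mathcal{X}),
\end{align*}
so $f=A\rho$ with $\rho\in\mathcal{M}(\Omega)$ in the notation of Definition \ref{def. integral RKBS}; therefore $f\in\mathcal{F}_{\sigma}(\mathcal{X},\Omega)$, which is the desired inclusion.

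I do not expect a genuine obstacle here: the argument is essentially a walk through the definitions, with the only nontrivial ingredient being the (elementary) embedding $L^{2}(\mu_i)\hookrightarrow L^{1}(\mu_i)$, which relies on finiteness of $\mu_i$. Two small remarks worth making in the write-up: the singularity of $\{\mu_i\}_{i\in[n]}$ is not used for this inclusion (it matters elsewhere, e.g.\ for the sum-kernel statement preceding the proposition); and an alternative proof can be obtained by extending $\{\mu_i\}_{i\in[n]}$ via Zorn's lemma to a maximal singular family $\{\mu_i\}_{i\in I}$ in $P(\Omega)$, invoking Theorem \ref{thm. decomposition of the integral RKBS} to get $\mathcal{F}_{\sigma}(\mathcal{X},\Omega)\underset{\mathcal{B}}{\equiv}\sum_{i\in I}\mathcal{L}_{\sigma}^{1}(\mu_i)$, and noting that $f=\sum_{i=1}^{n}f_i$ with $f_i\in\mathcal{L}_{\sigma}^{2}(\mu_i)\subset\mathcal{L}_{\sigma}^{1}(\mu_i)$, extended by zeros, lies in $\sum_{i\in I}\mathcal{L}_{\sigma}^{1}(\mu_i)$.
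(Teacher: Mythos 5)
Your proof is correct, but it takes a genuinely different route from the paper's. You argue directly at the level of representing measures: since $I=[n]$ is finite, an element of $\sum_{i\in[n]}^{2}\mathcal{L}_{\sigma}^{2}(\mu_{i})$ is just a pointwise sum $f=\sum_{i=1}^{n}f_{i}$; each $L^{2}$ density $h_{i}$ lies in $L^{1}(\mu_{i})$ by Cauchy--Schwarz, hence yields a signed measure $\rho_{i}$ with $d\rho_{i}=h_{i}\,d\mu_{i}$ in $\mathcal{M}(\Omega)$, and $f=A\bigl(\sum_{i=1}^{n}\rho_{i}\bigr)$ in the sense of Definition \ref{def. integral RKBS}. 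The paper instead extends $\{\mu_{i}\}_{i\in[n]}$ to a maximal singular family $\{\mu_{i}\}_{i\in I}$, constructs the zero-extension embedding $\iota:\bigoplus_{i\in[n]}^{2}\mathcal{L}_{\sigma}^{2}(\mu_{i})\rightarrow\bigoplus_{i\in I}^{1}\mathcal{L}_{\sigma}^{1}(\mu_{i})$, checks that $\iota$ maps the kernel of one sum-map onto the kernel of the other so that the induced map on the quotients is the identity on functions, and then invokes Theorem \ref{thm. decomposition of the integral RKBS} to identify $\sum_{i\in I}\mathcal{L}_{\sigma}^{1}(\mu_{i})$ with $\mathcal{F}_{\sigma}(\mathcal{X},\Omega)$ --- essentially the alternative proof you sketch in your closing remark. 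Your direct argument is shorter, needs nothing beyond the definitions, and, as you correctly observe, does not use singularity of the family, whereas the paper's proof does (only a singular family can be enlarged to a maximal singular family containing it). What the paper's route buys is coherence with the decomposition framework of Section 4: the inclusion is realized as an explicit morphism between the two sum-of-RKBS presentations, which is the form reused later in the representer-theorem discussion. The paper itself concedes in the remark following the proposition that the result can be obtained without the infinite-sum machinery (via $\mathcal{F}_{\sigma}(\mathcal{X},\Omega)=\bigcup_{\pi\in P(\Omega)}\mathcal{L}_{\sigma}^{p}(\pi)$), which is close in spirit to your direct argument.
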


\begin{proof}
    As we noted in Definition \ref{def. p-norm RKBS}, we can easily show that for a given $\pi \in P(\Omega)$, $\mathcal{L}_{\sigma}^{2}(\pi) \subset \mathcal{L}_{\sigma}^{1}(\pi)$ and $\lVert f\rVert_{\mathcal{L}_{\sigma}^{1}(\pi)} \le \lVert f\rVert_{\mathcal{L}_{\sigma}^{2}(\pi)}$ for all $f \in \mathcal{L}_{\sigma}^{2}(\pi)$. Let $\{\mu_{i}\}_{i\in I}$ be a maximal singular family containing $\{\mu_{i}\}_{i\in [n]}$.
    Consider a map $\iota: \bigoplus_{i\in [n]}^{2}\mathcal{L}_{\sigma}^{2}(\mu_{i}) \rightarrow \bigoplus_{i\in I}^{1}\mathcal{L}_{\sigma}^{1}(\mu_{i})$ defined by 
    \begin{align*}
        \iota(\mathbf{x})(i) = \begin{cases}
        \mathbf{x}(i), & \mbox{if }i\in [n] \\
        0, & \mbox{if } i \in I\setminus [n]
        \end{cases}
        \quad
        \text{for }\mathbf{x} \in \bigoplus_{i\in [n]}^{2}\mathcal{L}_{\sigma}^{2}(\mu_{i}).
    \end{align*}
    Since $\iota(\mathbf{x})(i) = \mathbf{x}(i) \in \mathcal{L}_{\sigma}^{2}(\mu_{i}) \subset \mathcal{L}_{\sigma}^{1}(\mu_{i})$ for all $i \in [n]$, we know that $\iota(\mathbf{x}) \in \prod_{i\in I}\mathcal{L}_{\sigma}^{1}(\mu_{i})$. Furthermore, by the following inequalities $\sum_{i\in I}\lVert \iota(\mathbf{x})(i)\rVert_{\mathcal{L}_{\sigma}^{1}(\mu_{i})} 
    =
    \sum_{i=1}^{n}\lVert \mathbf{x}(i)\rVert_{\mathcal{L}_{\sigma}^{1}(\mu_{i})} 
    \le
    \sum_{i=1}^{n}\lVert \mathbf{x}(i)\rVert_{\mathcal{L}_{\sigma}^{2}(\mu_{i})} 
    < 
    \infty$, we deduce that $\iota(\mathbf{x}) \in \bigoplus_{i\in I}^{1}\mathcal{L}_{\sigma}^{1}(\mu_{i})$.
    Thus, $\iota$ is well-defined linear map.

    By Remark \ref{remark. inf sum condition relate feature map}, we can define the RKBS linear map for the sum of RKBSs $\mathcal{S}_{1}: \bigoplus_{i\in I}^{1}\mathcal{L}_{\sigma}^{1}(\mu_{i})\rightarrow \mathbb{R}^{\mathcal{X}}$ by $\mathcal{S}_{1}((f_{i})_{i\in I})(x) = \sum_{i\in I}f_{i}(x)$ for $x\in \mathcal{X}$ and $(f_{i})_{i\in I} \in \bigoplus_{i\in I}^{1}\mathcal{L}_{\sigma}^{1}(\mu_{i})$.
    And let $\mathcal{S}_{2}: \bigoplus_{i\in [n]}^{2}\mathcal{L}_{\sigma}^{2}(\mu_{i}) \rightarrow \mathbb{R}^{\mathcal{X}}$ be the RKBS linear map defined by $\mathcal{S}_{2}((f_{i})_{i\in [n]})(x) = \sum_{i\in [n]}f_{i}(x)$ for $x\in \mathcal{X}$ and $(f_{i})_{i\in [n]} \in \bigoplus_{i\in [n]}^{2}\mathcal{L}_{\sigma}^{2}(\mu_{i})$. Now, consider the map $\overline{\iota}: \bigoplus_{i\in [n]}^{2}\mathcal{L}_{\sigma}^{2}(\mu_{i})/\ker{\mathcal{S}_{2}} \rightarrow
    \bigoplus_{i\in I}^{1}\mathcal{L}_{\sigma}^{1}(\mu_{i})/\ker{\mathcal{S}_{1}}$ defined by $\overline{\iota}([\mathbf{x}])= [\iota(\mathbf{x})]$ for $\mathbf{x} \in \bigoplus_{i\in [n]}^{2}\mathcal{L}_{\sigma}^{2}(\mu_{i})$.
    From the following fact
    \begin{align*}
        \iota^{-1}(\ker{\mathcal{S}_{1}}) 
        &=
        \left\{ \mathbf{x}\in \bigoplus_{i\in [n]}^{2}\mathcal{L}_{\sigma}^{2}(\mu_{i}) : \iota(\mathbf{x}) \in \ker{\mathcal{S}_{1}}\right\}
        \\
        &=
        \left\{ \mathbf{x}\in \bigoplus_{i\in [n]}^{2}\mathcal{L}_{\sigma}^{2}(\mu_{i}) : \sum_{i\in I}\left(\iota(\mathbf{x})(i)\right)(x) = 0 \text{ for all }x\in \mathcal{X}\right\}
        = \ker{\mathcal{S}_{2}}
        ,
    \end{align*}
    we deduce that $\overline{\iota}$ is well-defined monomorphism.
    If we consider the map $\widetilde{\operatorname{id}}=\hat{\mathcal{S}_{1}}\circ\overline{\iota}\circ\hat{\mathcal{S}_{2}}^{-1}:\sum_{i\in [n]}^{2}\mathcal{L}_{\sigma}^{2}(\mu_{i}) \rightarrow \sum_{i\in I}\mathcal{L}_{\sigma}^{1}(\mu_{i})$, then it is indeed the identity map. Thus, we have $\sum_{i\in [n]}^{2}\mathcal{L}_{\sigma}^{2}(\mu_{i}) \subset \sum_{i\in I}\mathcal{L}_{\sigma}^{1}(\mu_{i})$. Furthermore, by Theorem \ref{thm. decomposition of the integral RKBS}, we know that $\sum_{i \in I}\mathcal{L}_{\sigma}^{1}(\mu_{i}) = \mathcal{F}_{\sigma}(\mathcal{X},\Omega)$ as a set equality.
\end{proof}

Let a family of continuous functions $\{\sigma_{i}:\mathcal{X}\times\Omega\rightarrow\mathbb{R}\}_{i=1}^{n}$ be given. By the Tietze extension theorem and the pasting lemma, there is a continuous function $\sigma:\mathcal{X}\times\Omega\times[0,1]\rightarrow \mathbb{R}$ which is a extension of the continuous function $\Tilde{\sigma}:\mathcal{X}\times\Omega\times\{\frac{1}{n},\dots,\frac{n-1}{n},1\}\rightarrow \mathbb{R}$ defined by $\Tilde{\sigma}(x,w,\frac{i}{n}) = \sigma_{i}(x,w)$ for all $i=1,\dots, n$, $x\in \mathcal{X}$ and $w\in \Omega$.
In the following proposition, we show that
the finite sum of p-norm RKBSs associated with different functions is contained in the integral RKBS associated with a suitable function when considering a larger parameter space. This means that the class of integral RKBSs is quite large due to its flexibility in choosing the dimension of the parameter space.

\begin{proposition}
    \label{prop. parameter space extension can be embedded in integral RKBS}
    Let a family of continuous functions $\{\sigma_{i}:\mathcal{X}\times\Omega\rightarrow\mathbb{R}\}_{i=1}^{n}$ be given. Let $\{\pi_{i}\}_{i=1}^{n}$ be a collection of probability measures in $\Omega$. Then, we have
    \begin{align*}
        \sum_{i\in [n]}^{2}\mathcal{L}_{\sigma_{i}}^{2}(\Omega,\pi_{i}) \subset \mathcal{F}_{\sigma}(\mathcal{X},\Omega\times[0,1]).
    \end{align*}
\end{proposition}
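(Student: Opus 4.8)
The plan is to reduce the claim to Proposition~\ref{prop. multiple kernel method contained in integral RKBS} by transporting everything onto the enlarged parameter space $\Omega\times[0,1]$. For each $i\in[n]$, let $j_{i}:\Omega\rightarrow\Omega\times[0,1]$ be the continuous embedding $j_{i}(w)=(w,\tfrac{i}{n})$, so that $\sigma(x,j_{i}(w))=\sigma_{i}(x,w)$ for all $x\in\mathcal{X}$ and $w\in\Omega$, and set $\pi_{i}':=(j_{i})_{*}\pi_{i}\in P(\Omega\times[0,1])$, the pushforward of $\pi_{i}$, which is concentrated on the closed set $j_{i}(\Omega)=\Omega\times\{\tfrac{i}{n}\}$.

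First I would identify, for each $i\in[n]$, the $2$-norm RKBS $\mathcal{L}_{\sigma_{i}}^{2}(\Omega,\pi_{i})$ with $\mathcal{L}_{\sigma}^{2}(\Omega\times[0,1],\pi_{i}')$. Since $j_{i}$ is a homeomorphism onto the Borel set $j_{i}(\Omega)$ and $\pi_{i}'$ is concentrated there, the ``extension by zero off $j_{i}(\Omega)$'' map $\xi_{i}:L^{2}(\pi_{i})\rightarrow L^{2}(\pi_{i}')$, $(\xi_{i}h)(w,t)=h(w)$ for $t=\tfrac{i}{n}$ and $0$ otherwise, is a well-defined isometric isomorphism with inverse $g\mapsto g\circ j_{i}$ (the change-of-variables formula for image measures gives $\lVert\xi_{i}h\rVert_{L^{2}(\pi_{i}')}=\lVert h\rVert_{L^{2}(\pi_{i})}$, and surjectivity onto equivalence classes since $\pi_{i}'$ lives on $j_{i}(\Omega)$). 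The same change of variables shows that the RKBS map $A_{i}:L^{2}(\pi_{i})\rightarrow\mathbb{R}^{\mathcal{X}}$ of $\mathcal{L}_{\sigma_{i}}^{2}(\Omega,\pi_{i})$ and the RKBS map $\overline{A}_{i}:L^{2}(\pi_{i}')\rightarrow\mathbb{R}^{\mathcal{X}}$ of $\mathcal{L}_{\sigma}^{2}(\Omega\times[0,1],\pi_{i}')$ satisfy $A_{i}=\overline{A}_{i}\circ\xi_{i}$; equivalently the feature map of $\mathcal{L}_{\sigma_{i}}^{2}(\Omega,\pi_{i})$ equals that of $\mathcal{L}_{\sigma}^{2}(\Omega\times[0,1],\pi_{i}')$ composed with $\xi_{i}$, both being built from the duality isomorphism of \eqref{eq. isometric isom dual of L^p and L^q}. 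By Lemma~\ref{lem. using lee 4.2.2} this yields $\mathcal{L}_{\sigma_{i}}^{2}(\Omega,\pi_{i})\underset{\mathcal{B}}{\equiv}\mathcal{L}_{\sigma}^{2}(\Omega\times[0,1],\pi_{i}')$, a literal equality of Banach function spaces. Consequently, forming $\sum_{i\in[n]}^{2}(\cdot)$ of both sides (in the sense of Proposition~\ref{prop. infinite sum of RKBSs} with $p=2$ and $I=[n]$, for which the summability hypothesis is automatic on the finite set $[n]$) gives
\begin{align*}
    \sum_{i\in[n]}^{2}\mathcal{L}_{\sigma_{i}}^{2}(\Omega,\pi_{i})
    \underset{\mathcal{B}}{\equiv}
    \sum_{i\in[n]}^{2}\mathcal{L}_{\sigma}^{2}(\Omega\times[0,1],\pi_{i}').
\end{align*}

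Next I would observe that $\{\pi_{i}'\}_{i\in[n]}$ is a finite singular family in $P(\Omega\times[0,1])$: the sets $\Omega\times\{\tfrac{i}{n}\}$ are pairwise disjoint, so for $i\neq j$ the probability measures $\pi_{i}'$ and $\pi_{j}'$ are mutually singular (in particular distinct). Applying Proposition~\ref{prop. multiple kernel method contained in integral RKBS} with parameter space $\Omega\times[0,1]$, associated function $\sigma$, and this family gives $\sum_{i\in[n]}^{2}\mathcal{L}_{\sigma}^{2}(\Omega\times[0,1],\pi_{i}')\subset\mathcal{F}_{\sigma}(\mathcal{X},\Omega\times[0,1])$, which combined with the previous display closes the argument.

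The main obstacle is the identification in the second paragraph: one must make the change-of-variables map rigorous at the level of $L^{2}$-equivalence classes (well-definedness, isometry, surjectivity) and, crucially, check that it intertwines the two feature maps, not merely that the two feature spaces are abstractly isometric, so that Lemma~\ref{lem. using lee 4.2.2} genuinely applies. An alternative that avoids this bookkeeping is to argue directly on elements: if $f=\sum_{i=1}^{n}f_{i}$ with $f_{i}(x)=\int_{\Omega}\sigma_{i}(x,w)h_{i}(w)\,d\pi_{i}(w)$ and $h_{i}\in L^{2}(\pi_{i})\subset L^{1}(\pi_{i})$, form the measure $\rho_{i}\in\mathcal{M}(\Omega)$ with $\rho_{i}(B)=\int_{B}h_{i}\,d\pi_{i}$ and set $\mu=\sum_{i=1}^{n}(j_{i})_{*}\rho_{i}\in\mathcal{M}(\Omega\times[0,1])$; then $\int_{\Omega\times[0,1]}\sigma(x,w,t)\,d\mu(w,t)=\sum_{i=1}^{n}\int_{\Omega}\sigma_{i}(x,w)h_{i}(w)\,d\pi_{i}(w)=f(x)$ for every $x\in\mathcal{X}$, so $f\in\mathcal{F}_{\sigma}(\mathcal{X},\Omega\times[0,1])$. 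This is shorter but less in the structural spirit of the preceding results.
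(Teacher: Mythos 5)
Your proposal is correct and follows essentially the same route as the paper: embed each $L^{2}(\Omega,\pi_{i})$ into $L^{2}(\Omega\times[0,1],\pi_{i}\otimes\delta_{i/n})$ via $w\mapsto(w,\tfrac{i}{n})$ (your $\pi_{i}'=(j_{i})_{*}\pi_{i}$ is exactly the paper's $\pi_{i}\otimes\delta_{i/n}$), identify the corresponding $2$-norm RKBSs, and then invoke Proposition~\ref{prop. multiple kernel method contained in integral RKBS} for the singular family concentrated on the disjoint slices $\Omega\times\{\tfrac{i}{n}\}$. The only difference is a mild sharpening: by observing that $\xi_{i}$ is surjective on equivalence classes (since $\pi_{i}'$ lives on $j_{i}(\Omega)$) and intertwines the feature maps, you obtain via Lemma~\ref{lem. using lee 4.2.2} the full isometric identification $\mathcal{L}_{\sigma_{i}}^{2}(\Omega,\pi_{i})\underset{\mathcal{B}}{\equiv}\mathcal{L}_{\sigma}^{2}(\Omega\times[0,1],\pi_{i}')$, whereas the paper hand-builds the quotient monomorphism and settles for the inclusion together with the one-sided norm inequality, which already suffices.
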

\begin{proof}
    Fix $i\in [n]$, and define a map $\iota: L^{2}(\Omega,\pi_{i}) \rightarrow L^{2}(\Omega\times[0,1],\pi_{i}\otimes\delta_{i/n})$ by
    \begin{align*}
        \iota(h)(w,r) = \begin{cases}
        h(w), & \mbox{if }r = \frac{i}{n}, \\
        0, & \mbox{otherwise},
    \end{cases}
    \quad
    \text{for }
    w\in \Omega, r\in[0,1],
    \end{align*}
    where $\delta_{i/n}$ is the Dirac measure centred on $i/n$ in $([0,1],\mathcal{B}([0,1]))$.
    Then, $\iota(h)$ is measurable with respect to $(\Omega\times[0,1],\mathcal{B}(\Omega\times[0,1]))$ and $\int_{\Omega\times[0,1]}|\iota(h)(w,r)|^{2}d\pi_{i}\otimes\delta_{i/n} < \infty$. Thus, $\iota$ is well-defined linear map.

    Next, define $A:L^{2}(\Omega,\pi_{i}) \rightarrow \mathbb{R}^{\mathcal{X}}$ by $(Ah)(x)= \int_{\Omega}\sigma_{i}(x,w)h(w)d\pi_{i}$ for $h\in L^{2}(\Omega,\pi_{i})$, $x \in \mathcal{X}$ and define $B:L^{2}(\Omega\times[0,1],\pi_{i}\otimes\delta_{i/n})\rightarrow\mathbb{R}^{\mathcal{X}}$ by $(B\Tilde{h})(x) = \int_{\Omega\times[0,1]}\sigma(x,w,r)\Tilde{h}(w,r)d\pi_{i}\otimes\delta_{i/n}$ for $\Tilde{h} \in L^{2}(\Omega\times[0,1],\pi_{i}\otimes\delta_{i/n})$, $x\in \mathcal{X}$, which are the RKBS linear maps introduced in Definition \ref{def. p-norm RKBS}. From the following equations
    \begin{align*}
        \iota^{-1}(\ker{B})
        &=
        \left\{h \in L^{2}(\Omega,\pi_{i}): \iota(h) \in \ker{B}\right\}
        =
        \left\{h \in L^{2}(\Omega,\pi_{i}): B(\iota(h))=0 \right\}
        \\
        &=
        \left\{h \in L^{2}(\Omega,\pi_{i}): \int_{\Omega}\int_{[0,1]}\sigma(x,w,r)\iota(h)(w,r)d\delta_{i/n}d\pi_{i}\right\}
        \\
        &=
        \left\{h \in L^{2}(\Omega,\pi_{i}): \int_{\Omega}\sigma_{i}(x,w)h(w)d\pi_{i}\right\}
        =
        \ker{A},
    \end{align*}
    it follows that $\overline{\iota}:L^{2}(\Omega,\pi_{i})\rightarrow L^{2}(\Omega\times[0,1],\pi_{i}\otimes\delta_{i/n})$ defined by $\overline{\iota}([h])=[\iota(h)]$ for $h\in L^{2}(\Omega,\pi_{i})$ is well-defined monomorphism. Now, consider a map $\Tilde{\operatorname{id}}=\hat{B}\circ\overline{\iota}\circ\hat{A}^{-1}:\mathcal{L}_{\sigma_{i}}^{2}(\Omega,\pi_{i})\rightarrow\mathcal{L}_{\sigma}^{2}(\Omega\times[0,1],\pi_{i}\otimes\delta_{i/n})$. For any $Ah \in \mathcal{L}_{\sigma_{i}}^{2}(\Omega,\pi_{i}) = \operatorname{im}(A)$, we have $\Tilde{\operatorname{id}}(Ah)= \Tilde{B}\circ\overline{\iota}([h])=\hat{B}([\iota(h)])=B\iota(h)=Ah$, which shows that $\Tilde{\operatorname{id}}$ is the identity map on $\mathcal{L}_{\sigma_{i}}^{2}(\Omega,\pi_{i})$.
    Furthermore, we have
    \begin{align*}
        \lVert Ah\rVert_{\mathcal{L}_{\sigma}^{2}(\Omega\times[0,1],\pi_{i}\otimes\delta_{i/n})}
        &=
        \lVert B\iota(h)\rVert_{\mathcal{L}_{\sigma}^{2}(\Omega\times[0,1],\pi_{i}\otimes\delta_{i/n})}
        \\
        &=
        \lVert [\iota(h)]\rVert_{L_{\sigma}^{2}(\Omega\times[0,1],\pi_{i}\otimes\delta_{i/n})/\ker{B}}
        \\
        &=
        \inf_{\Tilde{g}\in \ker{B}}\lVert \iota(h) + \Tilde{g}\rVert_{L_{\sigma}^{2}(\Omega\times[0,1],\pi_{i}\otimes\delta_{i/n})}
        \\
        &\le
        \inf_{g\in \ker{A}}\lVert \iota(h) + \iota(g)\rVert_{L_{\sigma}^{2}(\Omega\times[0,1],\pi_{i}\otimes\delta_{i/n})}
        \\
        &=
        \lVert Ah\rVert_{\mathcal{L}_{\sigma_{i}}^{2}(\Omega,\pi_{i})}.
    \end{align*}
    Thus, for $i=1,\dots,n$, we have $ \mathcal{L}_{\sigma_{i}}^{2}(\Omega,\pi_{i}) \subset \mathcal{L}_{\sigma}^{2}(\Omega\times[0,1],\pi_{i}\otimes\delta_{i/n})$
    and for all $f \in \mathcal{L}_{\sigma}^{2}(\Omega,\pi_{i})$, it holds that $ \lVert f\rVert_{\mathcal{L}_{\sigma}^{2}(\Omega\times[0,1],\pi_{i}\otimes\delta_{i/n})}
    \le
    \lVert f\rVert_{\mathcal{L}_{\sigma_{i}}^{2}(\Omega,\pi_{i})}.$
    From this, we deduce that $\sum_{i\in [n]}^{2}\mathcal{L}_{\sigma_{i}}^{2}(\Omega,\pi_{i})\subset \sum_{i\in [n]}^{2}\mathcal{L}_{\sigma}^{2}(\Omega\times[0,1],\pi_{i}\otimes\delta_{i/n})$.
    Since the family $\{\pi_{i}\otimes\delta_{i/n}\}_{i=1}^{n}$ is singular in $P(\Omega\times[0,1])$, it follows from Proposition \ref{prop. multiple kernel method contained in integral RKBS} that $\sum_{i\in [n]}^{2}\mathcal{L}_{\sigma_{i}}^{2}(\Omega,\pi_{i}) \subset \mathcal{F}_{\sigma}(\mathcal{X},\Omega\times[0,1]).$
\end{proof}

\begin{remark}
    For the purpose of a realistic application, we consider the case where $p=2$ in this section, but the results can also be generalized to the case where $1\le p <\infty$. Note that from Corollary 13 in \citet{spek2022duality}, it is known that $\mathcal{F}_{\sigma}(\mathcal{X},\Omega)=\bigcup_{\pi \in P(\Omega)}\mathcal{L}_{\sigma}^{p}(\pi)$. Thus, Proposition \ref{prop. multiple kernel method contained in integral RKBS} can be obtained without needing to consider the infinite sum of RKBSs. However, using this approach allows for a more systematic exploration.
\end{remark}

\subsection{Existence of general solutions in the integral RKBS class}
Following \citet{steinwart2008support}, we introduce the concepts related to the integral form of the L-risk to be used in our framework.

\begin{definition}[\citep{steinwart2008support}]
    Let $(\mathcal{X},\mathcal{A})$ be a measurable space and $\mathcal{Y} \subset \mathbb{R}$ be a closed subset. Then a function $L: \mathcal{X} \times \mathcal{Y} \times \mathbb{R} \rightarrow [0,\infty)$ is called a loss function if it is measurable.
    Furthermore, we say a loss function  $L:\mathcal{X}\times \mathcal{Y}\times \mathbb{R} \rightarrow [0,\infty)$ is continuous if $L(x,y,\cdot):\mathbb{R}\rightarrow [0,\infty)$ is continuous for all $x\in \mathcal{X}, y\in \mathcal{Y}$.
\end{definition}
\begin{definition}[\citep{steinwart2008support}]
    Let $L: \mathcal{X} \times \mathcal{Y} \times \mathbb{R} \rightarrow [0,\infty)$ be a loss funciton and $P$ be a probability measure on $\mathcal{X} \times \mathcal{Y}$. Then, for a measurable function $f:\mathcal{X} \rightarrow \mathbb{R}$, the L-risk is defined by 
    \begin{align*}
        \mathcal{R}_{L,P}(f) =\int_{\mathcal{X}\times \mathcal{Y}}L(x,y,f(x))dP(x,y) = \int_{\mathcal{X}}\int_{\mathcal{Y}}L(x,y,f(x))dP(y|x)dP_{\mathcal{X}}(x).
    \end{align*}
\end{definition}

Since $(x,y)\mapsto (x,y,f(x))$ is measurable and the definition of loss function, the L-risk map $\mathcal{R}_{L,P}: \mathfrak{L}^{0}(X)\rightarrow [0,\infty]$ is well-defined. Furthermore, since $\mathcal{Y}$ is a closed subset of $\mathbb{R}$, the existence of the regular conditional probability $P(\cdot|x)$ and the marginal distribution $P_{\mathcal{X}}$ of $P$ on $\mathcal{X}$ follows. As a special case, given a sequence of data $(x_{i},y_{i})_{i=1}^{n} \in (\mathcal{X}\times \mathcal{Y})^{n}$, we consider the empirical measure  $D:= \frac{1}{n}\sum_{i\in [n]}\delta_{(x_{i},y_{i})}$ defined on $\mathcal{X}\times \mathcal{Y}$, where $\delta_{(x_{i},y_{i})}$ is the Dirac measure centered at $(x_{i},y_{i})$. Using this, we obtain the empirical L-risk defined as follows:
\begin{align*}
    R_{L,D}(f) := \frac{1}{n}\sum_{i=1}^{n}L(x_{i},y_{i},f(x_{i})).
\end{align*}

Let $L: \mathcal{X} \times \mathcal{Y} \times \mathbb{R} \rightarrow [0,\infty)$ be a loss function, $\mathcal{F}$ an arbitrary function space, and $P$ a distribution on $\mathcal{X}\times\mathcal{Y}$. For any $\lambda>0$, we aim to solve the following regularized optimization problem:
\begin{align*}
    \lambda\lVert f_{P,\lambda}\rVert_{\mathcal{F}} + \mathcal{R}_{L,P}(f_{P,\lambda}) = \inf\left\{\lambda\lVert f\rVert_{\mathcal{F}} + \mathcal{R}_{L,P}(f): f \in \mathcal{F}\right\}.
\end{align*}
A solution $f_{P,\lambda}$ that satisfies this condition is called a general solution. In practice, since the data distribution $P$ is unknown, we instead solve an optimization problem based on the empirical L-risk:
\begin{align*}
    \lambda\lVert f_{D,\lambda}\rVert_{\mathcal{F}} + \mathcal{R}_{L,D}(f_{D,\lambda}) = \inf\left\{\lambda\lVert f\rVert_{\mathcal{F}} + \mathcal{R}_{L,D}(f): f \in \mathcal{F}\right\}.
\end{align*}
This approach is referred to as the ERM (Empirical Risk Minimization) framework.

In general, the L-risk does not necessarily have to be finite, but the L-risk is finite for a loss function that satisfies the following definition.

\begin{definition}[\citep{steinwart2008support}]
    Let $P$ be a distribution on $\mathcal{X}\times \mathcal{Y}$.
    A loss function $L:\mathcal{X}\times\mathcal{Y}\times\mathbb{R}\rightarrow [0,\infty)$ is called P-integrable Nemitski loss if there is a measurable function $b:\mathcal{X}\times\mathcal{Y}\rightarrow [0,\infty)$ with $b\in \mathfrak{L}^{1}(P)$ and an increasing function $h:[0,\infty)\rightarrow[0,\infty)$ such that
    \begin{align*}
        L(x,y,t) \le b(x,y) + h(\lvert t \rvert)
    \end{align*}
    for all $(x,y,t) \in \mathcal{X}\times\mathcal{Y}\times \mathbb{R}$. In particular, for each $f\in \mathfrak{L}^{\infty}(P_{\mathcal{X}})$, we have $\mathcal{R}_{L,P}(f) < \infty$.
\end{definition}

The following result, originally presented in \citep{spek2022duality}, demonstrates the existence of a pre-dual space of the integral RKBS $\mathcal{F}_{\sigma}(\mathcal{X},\Omega)$, which provides a key idea for proving the existence of general solutions to one-layer neural networks.

\begin{proposition}[Pre-dual of integral RKBS \citep{spek2022duality}]
    Consider the operator $A^{*}:\mathcal{M}(\mathcal{X}) \rightarrow C(\Omega)$ defined by 
    \begin{align*}
        A^{*}\rho(w) = \int_{\mathcal{X}}\sigma(x,w)d\rho(x) \text{ for all } w\in \Omega,  \rho \in \mathcal{M}(\mathcal{X}).
    \end{align*}
    Define the normed vector space
    \begin{align*}
        \mathcal{G}_{\sigma}(\Omega,\mathcal{X}) =\{g \in C(\Omega): \exists \rho \in \mathcal{M}(\mathcal{X}) \text{ s.t. } A^{*}\rho = g\}.
    \end{align*}
    Then, $\mathcal{G}_{\sigma}(\Omega,\mathcal{X})$ serves as pre-dual of integral RKBS $\mathcal{F}_{\sigma}(\mathcal{X},\Omega)$, with the duality pairing given by
    \begin{align*}
        \left<f,g\right> = \int_{\mathcal{X}\times \Omega}\sigma(x,w)d(\rho \otimes \mu)    
    \end{align*}
    for all $f \in \mathcal{F}_{\sigma}(\mathcal{X},\Omega), g \in \mathcal{G}_{\sigma}(\Omega,\mathcal{X})$ where $\rho \in \mathcal{M}(\mathcal{X}), \mu \in \mathcal{M}(\Omega)$ such that $\rho \in (A^{*})^{-1}(g), \mu \in A^{-1}(f)$.
    \label{prop. duality of integral RKBS}
\end{proposition}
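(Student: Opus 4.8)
The plan is to realize $\mathcal{F}_{\sigma}(\mathcal{X},\Omega)$ directly as the dual of $\mathcal{G}_{\sigma}(\Omega,\mathcal{X})$, the latter equipped with the supremum norm it inherits from $C(\Omega)$, by unwinding the quotient picture $\mathcal{F}_{\sigma}(\mathcal{X},\Omega) \underset{\mathcal{B}}{\cong} \mathcal{M}(\Omega)/\ker A$ furnished by Theorem \ref{thm. RKBS Characterization}. The crucial first observation is that $\ker A$ is weak-$*$ closed in $\mathcal{M}(\Omega) \underset{\mathcal{B}}{\cong} C(\Omega)^{*}$: by \eqref{eq. Characterziation thm kerA closed} and the definition of $\psi$ in Definition \ref{def. integral RKBS}, $\ker A = \bigcap_{x\in\mathcal{X}}\ker\langle\,\cdot\,,\sigma(x,\cdot)\rangle$, an intersection of kernels of weak-$*$ continuous functionals; moreover $\mu\in\ker A$ iff $\mu\perp\sigma(x,\cdot)$ for every $x$, so $\ker A = \big(\operatorname{span}\{\sigma(x,\cdot):x\in\mathcal{X}\}\big)^{\perp}$. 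Hence, setting $N := {}^{\perp}(\ker A)\subseteq C(\Omega)$, the bipolar theorem gives that $N = \overline{\operatorname{span}}\{\sigma(x,\cdot):x\in\mathcal{X}\}$ is a closed subspace of $C(\Omega)$ with $N^{\perp}=\ker A$.

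Next I would locate $\mathcal{G}_{\sigma}(\Omega,\mathcal{X}) = \operatorname{im}(A^{*})$ inside $N$. Taking $\rho=\delta_{x}$ yields $A^{*}\delta_{x}=\sigma(x,\cdot)$, so $\operatorname{span}\{\sigma(x,\cdot):x\in\mathcal{X}\}\subseteq\operatorname{im}(A^{*})$; and for any $\rho\in\mathcal{M}(\mathcal{X})$ and $\mu\in\ker A$, Fubini (legitimate since $\sigma$ is bounded and $\rho\otimes\mu$ is a finite measure) gives $\int_{\Omega}(A^{*}\rho)\,d\mu = \int_{\mathcal{X}}\!\int_{\Omega}\sigma(x,w)\,d\mu(w)\,d\rho(x) = \int_{\mathcal{X}}(A\mu)\,d\rho = 0$, so $\operatorname{im}(A^{*})\subseteq{}^{\perp}(\ker A)=N$. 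Thus $\mathcal{G}_{\sigma}(\Omega,\mathcal{X})$ is a dense linear subspace of $N$ in the supremum norm, and since the dual of a normed space is canonically isometric to the dual of its completion, $\mathcal{G}_{\sigma}(\Omega,\mathcal{X})^{*}\underset{\mathcal{B}}{\cong} N^{*}$.

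It then suffices to chain the standard isometries
\begin{align*}
    \mathcal{G}_{\sigma}(\Omega,\mathcal{X})^{*}
    \;\underset{\mathcal{B}}{\cong}\; N^{*}
    \;\underset{\mathcal{B}}{\cong}\; C(\Omega)^{*}/N^{\perp}
    = \mathcal{M}(\Omega)/\ker A
    \;\underset{\mathcal{B}}{\cong}\; \operatorname{im}(A) = \mathcal{F}_{\sigma}(\mathcal{X},\Omega),
\end{align*}
where $N^{*}\underset{\mathcal{B}}{\cong} C(\Omega)^{*}/N^{\perp}$ is the Hahn--Banach subspace/quotient duality, $N^{\perp}=\ker A$ was just noted, and the last isometry is the transport of structure used in Theorem \ref{thm. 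RKBS Characterization} (the quotient norm of $[\mu]$ equals $\inf_{\mu'\in A^{-1}(A\mu)}\lVert\mu'\rVert_{\mathcal{M}(\Omega)}=\lVert A\mu\rVert_{\mathcal{F}_{\sigma}(\mathcal{X},\Omega)}$). Finally I would read off the pairing by tracing a functional through this chain: $g\in\mathcal{G}_{\sigma}(\Omega,\mathcal{X})$ acts on $\mathcal{F}_{\sigma}(\mathcal{X},\Omega)$ via the class $[\mu]\in\mathcal{M}(\Omega)/\ker A$ (with $f=A\mu$) whose action on $N$ is $h\mapsto\int_{\Omega}h\,d\mu$; writing $g=A^{*}\rho$ and applying Fubini once more, $\langle f,g\rangle=\int_{\Omega}(A^{*}\rho)\,d\mu=\int_{\mathcal{X}\times\Omega}\sigma\,d(\rho\otimes\mu)$. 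This is independent of the choice $\mu\in A^{-1}(f)$ (any two choices differ by an element of $\ker A$, which annihilates $g\in N$) and of $\rho\in(A^{*})^{-1}(g)$ (for $A^{*}(\rho-\rho')=0$ one has $\int_{\mathcal{X}\times\Omega}\sigma\,d((\rho-\rho')\otimes\mu)=\int_{\Omega}A^{*}(\rho-\rho')\,d\mu=0$), which matches the stated formula.

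The real content is the first paragraph: recognizing that $\ker A$ is weak-$*$ closed, so that $\mathcal{F}_{\sigma}(\mathcal{X},\Omega)$ is a dual space with canonical pre-dual $N={}^{\perp}(\ker A)$, together with the identification $\overline{\mathcal{G}_{\sigma}(\Omega,\mathcal{X})}^{\,\lVert\cdot\rVert_{\infty}}=N$ that shows $\mathcal{G}_{\sigma}(\Omega,\mathcal{X})$ has the right dual. Everything afterwards is bookkeeping with standard subspace/quotient dualities plus two applications of Fubini. One subtlety worth checking is that no spurious factor $\sup_{\mathcal{X}\times\Omega}\lvert\sigma\rvert$ creeps into the norm identities; it does not, precisely because the isometry is obtained through the quotient-of-$\mathcal{M}(\Omega)$ description rather than by estimating the bilinear form $(f,g)\mapsto\int\sigma\,d(\rho\otimes\mu)$ directly.
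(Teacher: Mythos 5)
The paper does not actually prove this proposition --- it imports it from \citet{spek2022duality} and only adds two remarks after the statement --- so there is no in-paper proof to compare against. Your argument, however, is correct and complete as a self-contained proof. The chain
\begin{align*}
\mathcal{G}_{\sigma}(\Omega,\mathcal{X})^{*}\underset{\mathcal{B}}{\cong}N^{*}\underset{\mathcal{B}}{\cong}C(\Omega)^{*}/N^{\perp}=\mathcal{M}(\Omega)/\ker A\underset{\mathcal{B}}{\cong}\mathcal{F}_{\sigma}(\mathcal{X},\Omega)
\end{align*}
is the right skeleton: $A^{*}\delta_{x}=\sigma(x,\cdot)$ together with the Fubini computation pins $\operatorname{im}(A^{*})$ between $\operatorname{span}\{\sigma(x,\cdot)\}$ and its pre-annihilator closure $N={}^{\perp}(\ker A)$, the density step legitimizes $\mathcal{G}_{\sigma}^{*}\cong N^{*}$, and the subspace/quotient duality plus the transport of structure from Theorem \ref{thm. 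RKBS Characterization} finish the identification; the tracing of the pairing and both well-definedness checks are sound. The one point you silently supply rather than prove is that the norm on $\mathcal{G}_{\sigma}(\Omega,\mathcal{X})$ is the supremum norm inherited from $C(\Omega)$ --- the statement never says which norm the ``normed vector space'' carries --- but your reading is the one consistent with the paper's subsequent observation (via Proposition \ref{prop. A is compact operator}) that $\mathcal{G}_{\sigma}(\Omega,\mathcal{X})$ is generally not complete, since the quotient norm $\inf_{\rho\in(A^{*})^{-1}(g)}\lVert\rho\rVert_{\mathcal{M}(\mathcal{X})}$ would make it a Banach space. A minor simplification: you do not actually need the weak-$*$ closedness of $\ker A$ to get $N^{\perp}=\ker A$, since $(\overline{M})^{\perp}=M^{\perp}$ for $M=\operatorname{span}\{\sigma(x,\cdot)\}$ directly; the weak-$*$ closedness is the conceptual reason $\mathcal{F}_{\sigma}(\mathcal{X},\Omega)$ is a dual space at all, but the bookkeeping goes through without invoking the bipolar theorem.
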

Before proceeding, we make two observations.
First, by Proposition \ref{prop. A is compact operator}, we observe that, in general, $\mathcal{G}_{\sigma}(\Omega, \mathcal{X}) $ is not a Banach space. Second, when considering the weak* topology on the integral RKBS $ (\mathcal{F}_{\sigma}(\mathcal{X},\Omega), w^*) $, all evaluation functionals $ev_{x}: (\mathcal{F}_{\sigma}(\mathcal{X},\Omega), w^*) \to \mathbb{R}$  
are continuous. More precisely, suppose that a net $\{f_a\} $ converges to $ f $ in the weak* topology, that is,  
$ \langle f, g \rangle = \lim_{a} \langle f_a, g \rangle \text{ for all } g \in \mathcal{G}_{\sigma}(\Omega, \mathcal{X}).$  
For any \( f \in \mathcal{F}_{\sigma}(\mathcal{X},\Omega) \), we have $\langle ev_{x_0}, f \rangle = \langle f, g_0 \rangle$,
where \( g_0 \) is defined as
$ g_0 = \int_{\mathcal{X}} \sigma(x, \cdot) \, d\delta_{x_0}(x)$.  
Taking the limits, it follows that
\[
\lim_{a} \langle ev_{x_0}, f_a \rangle = \lim_{a} \langle f_a, g_0 \rangle = \langle f, g_0 \rangle = f(x_0) \quad \text{for all } x_0 \in \mathcal{X}.
\]  

The following theorem establishes the existence of general solutions for one-layer neural networks. This result is inspired by the approach used in \citet{steinwart2008support} to prove the existence of general solutions for support vector machines (SVM). However, since the integral RKBS does not allow for a direct use of the dual or bidual space as in Hilbert spaces or reflexive Banach spaces, a theorem such as proposition 6 on p. 75 in \citet{ekeland1983infinite} cannot be applied directly. Fortunately, by utilizing Proposition \ref{prop. duality of integral RKBS}, we were able to exploit the existence of a pre-dual space and apply the weak* topology to overcome this limitation. 

\begin{theorem}[Existence of general solution of one-layer neural networks]
    Let an integral RKBS $\mathcal{F}_{\sigma}(\mathcal{X},\Omega)$ be given.
    Let $L:\mathcal{X}\times \mathcal{Y}\times \mathbb{R} \rightarrow [0,\infty)$ be a continuous loss function and $P$ be a ditribution on $\mathcal{X} \times \mathcal{Y}$ such that $L$ is a $P$-integrable Nemitski loss. Then, for any $\lambda>0$, there is a $f_{P,\lambda} \in \mathcal{F}_{\sigma}(\mathcal{X},\Omega)$ such that
    \begin{align*}
    \lambda\lVert f_{P,\lambda}\rVert_{\mathcal{F}_{\sigma}(\mathcal{X},\Omega)} + \mathcal{R}_{L,P}(f_{P,\lambda}) = \inf\left\{\lambda\lVert f\rVert_{\mathcal{F}_{\sigma}(\mathcal{X},\Omega)} + \mathcal{R}_{L,P}(f): f \in \mathcal{F}_{\sigma}(\mathcal{X},\Omega)\right\}.
    \end{align*}
    \label{thm. general solution one-layer nn}
\end{theorem}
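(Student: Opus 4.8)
The plan is to run the direct method of the calculus of variations in the weak* topology supplied by Proposition \ref{prop. duality of integral RKBS}. Write $J(f) := \lambda\lVert f\rVert_{\mathcal{F}_{\sigma}(\mathcal{X},\Omega)} + \mathcal{R}_{L,P}(f)$ for $f \in \mathcal{F}_{\sigma}(\mathcal{X},\Omega)$. Since $L$ is $[0,\infty)$-valued we have $J \ge 0$, and since $L$ is a $P$-integrable Nemitski loss and the zero function lies in $\mathfrak{L}^{\infty}(P_{\mathcal{X}})$, we get $\mathcal{R}_{L,P}(0) < \infty$; hence $c := \inf\{J(f): f \in \mathcal{F}_{\sigma}(\mathcal{X},\Omega)\}$ is a finite nonnegative number. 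First I would fix a minimizing sequence $(f_{n})_{n}$ with $J(f_{n}) \to c$; then $\lambda\lVert f_{n}\rVert_{\mathcal{F}_{\sigma}(\mathcal{X},\Omega)} \le J(f_{n})$ is bounded, so $(f_{n})$ lies in some closed ball $R\,\overline{B}$ of $\mathcal{F}_{\sigma}(\mathcal{X},\Omega)$.

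Next I would extract a weak*-convergent subsequence. By Proposition \ref{prop. duality of integral RKBS}, $\mathcal{F}_{\sigma}(\mathcal{X},\Omega)$ is isometrically isomorphic to the dual of $\mathcal{G}_{\sigma}(\Omega,\mathcal{X})$, which is a linear subspace of $C(\Omega)$; since $\Omega$ is a compact metric space, $C(\Omega)$ is separable, hence so is $\mathcal{G}_{\sigma}(\Omega,\mathcal{X})$ (even though, by Proposition \ref{prop. A is compact operator}, it need not be closed in $C(\Omega)$). Therefore the weak* topology restricted to $R\,\overline{B}$ is metrizable, and by the Banach--Alaoglu theorem $R\,\overline{B}$ is weak*-compact, hence weak*-sequentially compact. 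Passing to a subsequence (not relabeled), $f_{n} \to f^{*}$ in the weak* topology for some $f^{*} \in R\,\overline{B} \subset \mathcal{F}_{\sigma}(\mathcal{X},\Omega)$.

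Then I would check that $J$ is sequentially weak*-lower semicontinuous along $(f_{n})$. The norm $\lVert\cdot\rVert_{\mathcal{F}_{\sigma}(\mathcal{X},\Omega)}$, being the dual norm $\sup_{\lVert g\rVert\le 1}\langle \cdot, g\rangle$, is a supremum of weak*-continuous functionals and thus weak*-lsc, so $\lVert f^{*}\rVert_{\mathcal{F}_{\sigma}(\mathcal{X},\Omega)} \le \liminf_{n}\lVert f_{n}\rVert_{\mathcal{F}_{\sigma}(\mathcal{X},\Omega)}$. For the risk term, recall from the discussion following Proposition \ref{prop. duality of integral RKBS} that every evaluation functional $ev_{x}$ is weak*-continuous on $\mathcal{F}_{\sigma}(\mathcal{X},\Omega)$, so $f_{n}(x) \to f^{*}(x)$ for every $x \in \mathcal{X}$; since $L(x,y,\cdot)$ is continuous for all $(x,y)$, we get $L(x,y,f_{n}(x)) \to L(x,y,f^{*}(x))$ pointwise on $\mathcal{X}\times\mathcal{Y}$, and because $L \ge 0$, Fatou's lemma yields $\mathcal{R}_{L,P}(f^{*}) \le \liminf_{n}\mathcal{R}_{L,P}(f_{n})$. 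Adding the two estimates and using superadditivity of $\liminf$, $J(f^{*}) \le \liminf_{n}\lambda\lVert f_{n}\rVert_{\mathcal{F}_{\sigma}(\mathcal{X},\Omega)} + \liminf_{n}\mathcal{R}_{L,P}(f_{n}) \le \liminf_{n}J(f_{n}) = c$, so $f^{*}$ attains the infimum and is the desired general solution.

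The main obstacle is the compactness step: since $\mathcal{F}_{\sigma}(\mathcal{X},\Omega)$ is neither a Hilbert space nor reflexive (indeed its pre-dual need not even be complete), one cannot simply invoke weak compactness of bounded sets as for RKHS or reflexive RKBS; the substitute is weak*-sequential compactness, which requires exactly the existence of a separable pre-dual — and this is where Proposition \ref{prop. duality of integral RKBS} together with the compactness of $\Omega$ is essential. The only other delicate point is ensuring the loss term interacts well with weak* limits, which is handled by the weak*-continuity of the evaluation functionals; after that, lower semicontinuity of the norm and the Fatou argument are routine.
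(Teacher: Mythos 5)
Your proof is correct and follows the same overall strategy as the paper's: both exploit the pre-dual $\mathcal{G}_{\sigma}(\Omega,\mathcal{X})$ from Proposition \ref{prop. duality of integral RKBS} to equip $\mathcal{F}_{\sigma}(\mathcal{X},\Omega)$ with a weak* topology, use weak*-continuity of the evaluation functionals together with continuity of $L(x,y,\cdot)$ and Fatou to obtain lower semicontinuity of the risk, use weak* lower semicontinuity of the dual norm, and invoke Banach--Alaoglu. The genuine difference is the compactness step. The paper works with nets: it shows the sublevel set $C=\{f:\lambda\lVert f\rVert_{\mathcal{F}_{\sigma}(\mathcal{X},\Omega)}+\mathcal{R}_{L,P}(f)\le \mathcal{R}_{L,P}(0)\}$ is weak*-closed and contained in a ball, hence weak*-compact, and then minimizes a lower semicontinuous functional on a compact set. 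You instead extract a weak*-convergent subsequence of a minimizing sequence, which requires the additional observation that $\mathcal{G}_{\sigma}(\Omega,\mathcal{X})$ is separable so that the weak* topology is metrizable on bounded sets; this observation is valid here because $\mathcal{G}_{\sigma}(\Omega,\mathcal{X})$ sits inside the separable space $C(\Omega)$ and carries the supremum norm (which is exactly why the paper can remark, via Proposition \ref{prop. A is compact operator}, that it fails to be complete). Your sequential route costs you that separability argument but buys a cleaner limit step: Fatou's lemma is applied to an honest sequence, whereas the paper's version applies the inequality $\int\lim_{a}L(x,y,f_{a}(x))\,dP\le\liminf_{a}\int L(x,y,f_{a}(x))\,dP$ to a net, for which Fatou is more delicate; in that respect your argument is the more robust of the two.
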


\begin{proof} 
    Since $\mathcal{F}_{\sigma}(\mathcal{X},\Omega) \subsetneq C(X) \subset \mathfrak{L}^{\infty}(P_{X})$ and $R_{L,P}(f) < \infty$ for all $f \in \mathcal{F}_{\sigma}(\mathcal{X},\Omega)$, the L-risk map $\mathcal{R}_{L,P}:\mathcal{F}_{\sigma}(\mathcal{X},\Omega) \rightarrow \mathbb{R}$ is well defined. By Proposition \ref{prop. duality of integral RKBS}, we can cosider the weak* topology of $\mathcal{F}_{\sigma}(\mathcal{X},\Omega)$. We want to show that the L-risk map $\mathcal{R}_{L,P}: (\mathcal{F}_{\sigma}(\mathcal{X},\Omega),w^{*}) \rightarrow \mathbb{R}$ is weak* lower semi-continuous.
    As mentioned earlier, all evaluation functionals $ev_{x}:(\mathcal{F}_{\sigma}(\mathcal{X},\Omega),w^{*}) \rightarrow \mathbb{R}$ are continuous.
    Thus, if $f_{a} \rightarrow f$ in $(\mathcal{F}_{\sigma}(\mathcal{X},\Omega),w^{*})$, then $f_{a}(x) \rightarrow f(x)$ for all $x \in \mathcal{X}$.
    Due to the continuity of the loss funciton $L$, we obtain the following:
    \begin{align*}
        \mathcal{R}_{L,P}(f) &= \int_{X\times Y}L(x,y,f(x))dP(x,y) 
        =
        \int_{X\times Y}\lim_{a} L(x,y,f_{a}(x))dP(x,y)
        \\
        &\le \lim\inf_a\int_{X\times Y}L(x,y,f_{a}(x))dP(x,y) = \lim\inf_{a}\mathcal{R}_{L,P}(f_{a}). 
    \end{align*}
    Consider the set $C$ defined by
    \begin{align*}
        C := \{f \in \mathcal{F}_{\sigma}(\mathcal{X},\Omega) : \lambda\lVert f \rVert_{\mathcal{F}_{\sigma}(\mathcal{X},\Omega)} + \mathcal{R}_{L,P}(f) \le M\}.
    \end{align*}
    where $\infty > M := \mathcal{R}_{L,P}(0) \ge 0$. Then, we have  $0 \in C$ and $C \subseteq \frac{M}{\lambda}B_{\mathcal{F}_{\sigma}(\mathcal{X},\Omega)}$ where $B_{\mathcal{F}_{\sigma}(\mathcal{X},\Omega)}$ is ball in $\mathcal{F}_{\sigma}(\mathcal{X},\Omega)$.
    Since $\lVert\cdot\rVert_{\mathcal{F}_{\sigma}(\mathcal{X},\Omega)}:(\mathcal{F}_{\sigma}(\mathcal{X},\Omega),w^{*}) \rightarrow \mathbb{R}$ is weak* lower semi-continuous 
    (see \uppercase\expandafter{\romannumeral5} \S1 Exercise 9 of \citet{conway1997course}), we can deduce that $f\mapsto \lambda\lVert f\rVert_{\mathcal{F}_{\sigma}(\mathcal{X},\Omega)}+\mathcal{R}_{L,P}(f)$ is weak* lower semi-continuous. Thus, $C$ is closed set in $(\mathcal{F}_{\sigma}(\mathcal{X},\Omega),w^{*})$. From the Banach-Aloaglu theorem, we can infer that $C$ is compact set in $(\mathcal{F}_{\sigma}(\mathcal{X},\Omega),w^{*})$. Thus, $f\mapsto \lambda\lVert f\rVert_{\mathcal{F}_{\sigma}(\mathcal{X},\Omega)}+\mathcal{R}_{L,P}(f)$ attains its minimum on $C$ and hence on $\mathcal{F}_{\sigma}(\mathcal{X},\Omega)$.
\end{proof}

\subsection{Reformulation scheme}

We examine how the existence of a solution in the feature space (a general Banach space) implies the existence of a corresponding solution in the hypothesis space (RKBS). Roughly speaking, an optimization problem in the hypothesis space can be reformulated as an optimization problem in the feature space. This result was established for integral RKBSs in \cite{bartolucci2023understanding}. For the reader's convenience, we provide a slightly modified proof. Moreover, for generality, we extend our discussion beyond empirical L-risk to general L-risk and beyond integral RKBSs to general RKBSs.
 
\begin{lemma}
    \label{lem. isometric v.s. have same minimizer}
    Let $V,W$ be normed vector spaces and $T:V\rightarrow W$ be an isometric isomorphism.
    For any functional $J: W\rightarrow \mathbb{R}\cup\{+\infty\}$, we have 
    \begin{align*}
        \inf\left\{J(\alpha)+\lVert \alpha\rVert_{W}:\alpha \in W\right\}
        =
        \inf\left\{J(T\theta)+\lVert \theta\rVert_{V}:\theta \in V\right\}.
    \end{align*}
\end{lemma}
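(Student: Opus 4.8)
The plan is to treat the identity as a change of variables induced by $T$, and to observe that both the functional term and the norm term are preserved under this substitution. Concretely, since $T:V\rightarrow W$ is a bijection, every $\alpha\in W$ can be written in a unique way as $\alpha=T\theta$ with $\theta=T^{-1}\alpha\in V$; conversely, as $\theta$ ranges over all of $V$, the element $T\theta$ ranges over all of $W$. Thus the two sets
\begin{align*}
    S_{W}:=\left\{J(\alpha)+\lVert\alpha\rVert_{W}:\alpha\in W\right\}, \qquad
    S_{V}:=\left\{J(T\theta)+\lVert\theta\rVert_{V}:\theta\in V\right\}
\end{align*}
are parametrized by the same index set once we identify $\alpha$ with $T^{-1}\alpha$.

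The remaining step is to check that the corresponding values agree termwise. For any $\theta\in V$, put $\alpha=T\theta$. The functional contributions $J(\alpha)$ and $J(T\theta)$ coincide by definition of $\alpha$. For the norm contributions, the hypothesis that $T$ is an \emph{isometric} isomorphism gives $\lVert\alpha\rVert_{W}=\lVert T\theta\rVert_{W}=\lVert\theta\rVert_{V}$. Hence $J(\alpha)+\lVert\alpha\rVert_{W}=J(T\theta)+\lVert\theta\rVert_{V}$, so $S_{W}=S_{V}$ as subsets of $\mathbb{R}\cup\{+\infty\}$, and consequently their infima (taken in $\mathbb{R}\cup\{+\infty\}$) coincide, which is exactly the claimed equality.

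There is no real obstacle here: the statement is a one-line change of variables, and the only two facts used are the bijectivity of $T$ (so the reparametrization is onto) and the norm-preservation of $T$ (so the penalty term is unchanged). One should only take minor care that the equality is an equality of extended-real infima, so that the case where $J$ takes the value $+\infty$, or where the infimum is $+\infty$, is handled uniformly; this is automatic once one knows $S_{W}=S_{V}$ as sets.
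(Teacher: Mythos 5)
Your proposal is correct and follows essentially the same route as the paper's proof: a change of variables $\alpha = T\theta$ using the bijectivity of $T$, combined with the isometry identity $\lVert T\theta\rVert_{W}=\lVert\theta\rVert_{V}$, so that the two sets of values coincide and hence so do their infima. Your additional remark about handling extended-real values uniformly is a harmless refinement of the same one-line argument.
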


\begin{proof}
    $\inf\left\{J(\alpha)+\lVert \alpha\rVert_{W}:\alpha \in W\right\}
    =
    \inf\left\{J(T\theta)+\lVert T\theta\rVert_{W}:\alpha \in W\right\}
    =
        \inf\left\{J(T\theta)+\lVert \theta\rVert_{V}:\alpha \in V\right\}.$ 
\end{proof}

\begin{proposition}[\citep{bartolucci2023understanding}]
    Let $\mathcal{B}=(\Psi,\psi,A)$ be an RKBS such that $\mathcal{B}\subset \mathfrak{L}^{0}(\mathcal{X})$.
    \\
    Then, we have
    \begin{align*}
        \underbrace{\inf\left\{\mathcal{R}_{L,P}(f)+\lVert f\rVert_{\mathcal{B}}: f\in \mathcal{B}\right\}}_{(1)} 
        =
        \underbrace{
        \inf\left\{\mathcal{R}_{L,P}(A\mu)+\lVert \mu\rVert_{\Psi}: \mu \in \Psi\right\}}_{(2)}. 
    \end{align*}
    Furthermore, if $\mu^{*}$ is a minimizer of (2), then $f^{*}:=A\mu^{*}$ is a minimizer of (1).
    \label{prop. reformulation of optimization problem}
\end{proposition}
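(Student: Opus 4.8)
The plan is to reduce the statement to the isometric isomorphism $\hat{A}\colon\Psi/\ker A\to\mathcal{B}$ produced by Theorem \ref{thm. RKBS Characterization}, and then invoke Lemma \ref{lem. isometric v.s. have same minimizer}. First I would record the following, all read off from the proof of Theorem \ref{thm. RKBS Characterization}: the quotient map $\pi\colon\Psi\to\Psi/\ker A$ satisfies $\hat{A}\circ\pi=A$; the map $\hat{A}$ is an isometric isomorphism when $\mathcal{B}$ carries the norm $\lVert f\rVert_{\mathcal{B}}=\inf_{\nu\in A^{-1}(f)}\lVert\nu\rVert_{\Psi}$; and $\lVert\pi(\mu)\rVert_{\Psi/\ker A}=\inf_{\nu\in\mu+\ker A}\lVert\nu\rVert_{\Psi}$. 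I would also note that, since $\mathcal{B}\subset\mathfrak{L}^{0}(\mathcal{X})$, every $f\in\mathcal{B}$ is measurable, so $\mathcal{R}_{L,P}$ restricts to a well-defined functional $\mathcal{B}\to[0,\infty]$ and both infima in the statement are well-defined elements of $[0,\infty]$.

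Next I would apply Lemma \ref{lem. isometric v.s. have same minimizer} with $V=\Psi/\ker A$, $W=\mathcal{B}$, $T=\hat{A}$, $J=\mathcal{R}_{L,P}$, which gives
\[\inf\{\mathcal{R}_{L,P}(f)+\lVert f\rVert_{\mathcal{B}}:f\in\mathcal{B}\}=\inf\{\mathcal{R}_{L,P}(\hat{A}\theta)+\lVert\theta\rVert_{\Psi/\ker A}:\theta\in\Psi/\ker A\}.\]
To identify the right-hand side with (2), I would write each $\theta=\pi(\mu)$ for some $\mu\in\Psi$, so that $\hat{A}\theta=A\mu$ and $\lVert\theta\rVert_{\Psi/\ker A}=\inf_{\nu\in\mu+\ker A}\lVert\nu\rVert_{\Psi}$. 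Since $A\nu=A\mu$ for every $\nu\in\mu+\ker A$, the value $\mathcal{R}_{L,P}(A\mu)$ is constant on that coset, hence
\[\mathcal{R}_{L,P}(\hat{A}\theta)+\lVert\theta\rVert_{\Psi/\ker A}=\inf_{\nu\in\mu+\ker A}\bigl(\mathcal{R}_{L,P}(A\nu)+\lVert\nu\rVert_{\Psi}\bigr),\]
where this identity is valid in $[0,\infty]$ even when $\mathcal{R}_{L,P}(A\mu)=\infty$. Taking the infimum over $\theta\in\Psi/\ker A$, i.e. over all cosets $\mu+\ker A$, and using that these cosets exhaust $\Psi$, the nested infimum collapses to $\inf\{\mathcal{R}_{L,P}(A\nu)+\lVert\nu\rVert_{\Psi}:\nu\in\Psi\}$, which is exactly (2). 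This establishes the equality of (1) and (2).

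For the last assertion, suppose $\mu^{*}$ is a minimizer of (2) and put $f^{*}:=A\mu^{*}\in\mathcal{B}$. Since $\mu^{*}\in A^{-1}(f^{*})$ we have $\lVert f^{*}\rVert_{\mathcal{B}}\le\lVert\mu^{*}\rVert_{\Psi}$, and $\mathcal{R}_{L,P}(f^{*})=\mathcal{R}_{L,P}(A\mu^{*})$; therefore $\mathcal{R}_{L,P}(f^{*})+\lVert f^{*}\rVert_{\mathcal{B}}\le\mathcal{R}_{L,P}(A\mu^{*})+\lVert\mu^{*}\rVert_{\Psi}$, and the right-hand side equals the common value of (1) and (2). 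Since this common value is also a lower bound for the left-hand side, equality holds and $f^{*}$ is a minimizer of (1).

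The argument is essentially bookkeeping with infima and quotient norms; the only place that needs attention is the collapse of the nested infimum $\inf_{\mu\in\Psi}\inf_{\nu\in\mu+\ker A}$ to a single infimum over $\Psi$ (which relies on surjectivity of $\pi$, equivalently of $A$ onto $\mathcal{B}$), together with keeping track of the possibly infinite values of $\mathcal{R}_{L,P}$ — both of which Lemma \ref{lem. isometric v.s. have same minimizer} is built to accommodate. A shorter alternative that bypasses the quotient space is to prove the two inequalities directly: ``$\le$'' from $\lVert A\mu\rVert_{\mathcal{B}}\le\lVert\mu\rVert_{\Psi}$ for all $\mu\in\Psi$, and ``$\ge$'' from the fact that for each $f\in\mathcal{B}$ and $\varepsilon>0$ there is $\mu\in A^{-1}(f)$ with $\lVert\mu\rVert_{\Psi}<\lVert f\rVert_{\mathcal{B}}+\varepsilon$.
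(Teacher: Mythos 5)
Your proposal is correct and follows essentially the same route as the paper: reduce to the isometric isomorphism $\hat{A}\colon\Psi/\ker A\to\mathcal{B}$, invoke Lemma \ref{lem. isometric v.s. have same minimizer}, and collapse the nested infimum over cosets of $\ker A$ into a single infimum over $\Psi$. Your handling of the minimizer claim (bounding $\mathcal{R}_{L,P}(f^{*})+\lVert f^{*}\rVert_{\mathcal{B}}$ by the already-established common value of (1) and (2)) is a slightly tidier phrasing of the same final step the paper carries out.
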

\begin{proof}
    Note that $\hat{A}:\Psi/\ker{A} \rightarrow \mathcal{B}$ is an isometric isomorphism. By Lemma \ref{lem. isometric v.s. have same minimizer}, we have the followings:
    \begin{align*}
        \inf\left\{\mathcal{R}_{L,P}(f)+\lVert f\rVert_{\mathcal{B}}: f\in \mathcal{B}\right\}
        &=
        \inf\left\{\mathcal{R}_{L,P}(\hat{A}[\mu])+\lVert [\mu]\rVert_{\Psi/\ker{A}}: [\mu] \in \Psi/\ker{A}\right\}
        \\
        &=
        \inf\left\{\inf\left\{\mathcal{R}_{L,P}(\hat{A}[\mu])+\lVert \nu \rVert_{\Psi}: \nu \in [\mu]\right\}: [\mu] \in \Psi/\ker{A}\right\}
    \end{align*}
    If we denote $C_{[\mu]}:
    =
    \left\{\mathcal{R}_{L,P}(\hat{A}[\mu])+\lVert \nu \rVert_{\Psi}:\nu \in [\mu]\right\}
    =
    \mathcal{R}_{L,P}(\hat{A}[\mu])+\lVert [\mu] \rVert_{\Psi}$, then we have:
    \begin{align*}
        \inf\left\{\mathcal{R}_{L,P}(f)+\lVert f\rVert_{\mathcal{B}}: f\in \mathcal{B}\right\} 
        =
        \inf\left\{\inf C_{[\mu]}: [\mu] \in \Psi/\ker{A}\right\}
    \end{align*}
    From the fact that $\bigcup_{[\mu]\in \Psi/\ker{A}}[\mu] = \Psi$, we have $\bigcup_{[\mu]\in \Psi/\ker{A}}C_{[\mu]} 
    =
    \left\{\mathcal{R}_{L,P}(A\mu)+\lVert \mu\rVert_{\Psi}: \mu \in \Psi\right\}$. Thus, we can infer that
    \begin{align*}
        &\inf\left\{\mathcal{R}_{L,P}(f)+\lVert f\rVert_{\mathcal{B}}: f\in \mathcal{B}\right\} 
        =
        \inf\left\{\inf C_{[\mu]}: [\mu] \in \Psi/\ker{A}\right\}
        \\
        &=
        \inf \bigcup_{[\mu]\in \Psi/\ker{A}}C_{[\mu]}
        =
        \inf\left\{\mathcal{R}_{L,P}(A\mu)+\lVert \mu\rVert_{\Psi}: \mu \in \Psi\right\}.
    \end{align*}
    Now, let $\mu^{*}$ be a minimizer of (2). Then, for each $\nu \in \Psi$, $\mathcal{R}_{L,P}(A\mu^{*})+\lVert \mu^{*}\rVert_{\Psi} \le  \mathcal{R}_{L,P}(A\nu)+\lVert \nu\rVert_{\Psi}$. If we denote $f = A\nu$, then we have
    \begin{align*}
        \mathcal{R}_{L,P}(A\mu^{*})+\lVert \mu^{*}\rVert_{\Psi} 
        &\le
        \inf\left\{\mathcal{R}_{L,P}(A\nu)+\lVert \nu\rVert_{\Psi}: \nu \in A^{-1}(f)\right\}
        \\
        &=
        \mathcal{R}_{L,P}(A\nu) + \inf\left\{\lVert \nu\rVert_{\Psi}: \nu \in A^{-1}(f)\right\}
        \\
        &=
        \mathcal{R}_{L,P}(f) + \lVert f \rVert_{\mathcal{B}}
    \end{align*}
    In other words, if we denote $f^{*}= A\mu^{*}$, then for any $f\in \mathcal{B}$, we have 
    \begin{align*}
        \mathcal{R}_{L,P}(f^{*}) + \lVert f^{*} \rVert_{\mathcal{B}} 
        \le
        \mathcal{R}_{L,P}(A\mu^{*}) + \lVert \mu^{*} \rVert_{\mathcal{B}}
        \le
        \mathcal{R}_{L,P}(f) + \lVert f \rVert_{\mathcal{B}}.
    \end{align*}
\end{proof}

The following result generalizes Proposition \ref{prop. reformulation of optimization problem}. It shows that when the feature space is a direct sum of Banach spaces, the optimization problem in the feature space can be reformulated as an optimization problem in the direct sum of hypothesis spaces (RKBSs). While the proof follows a similar approach, it relies on the operators constructed in Proposition \ref{prop. compatibility}. For clarification, refer to the proof of Proposition \ref{prop. compatibility}.

\begin{proposition}
    For $1\le p < \infty$,
    let RKBS triple $\mathcal{B}=(\bigoplus_{i\in I}^{p}\Psi_{i},\psi,A)$ with $\mathcal{B} \subset \mathfrak{L}^{0}(\mathcal{X})$ be given. Then, by Proposition \ref{prop. compatibility}, there is a family of RKBS triples $\{\mathcal{B}_{i} = (\Psi_{i},\psi_{i},A_{i})\}_{i\in I}$ such that $\mathcal{B} \underset{\mathcal{B}}{\equiv} \sum_{i\in I }^{p}\mathcal{B}_{i} = (\bigoplus_{i\in I}^{p}\mathcal{B}_{i},\mathbf{s},\mathcal{S})$. Under this assumption, we obtain the following result:
    \begin{align*}
        &\underbrace{\inf\left\{\mathcal{R}_{L,P}(A(\mu_{i})_{i\in I})+\lVert (\mu_{i})_{i\in I}\rVert_{\bigoplus_{i\in I}^{p}\Psi_{i}}: (\mu_{i})_{i\in I}\in \bigoplus_{i\in I}^{p}\Psi_{i}\right\}}_{(1)} 
        \\
        &=
        \underbrace{\inf\left\{\mathcal{R}_{L,P}(\mathcal{S}(f_{i})_{i\in I})+\lVert (f_{i})_{i\in I}\rVert_{\bigoplus_{i\in I}^{p}\mathcal{B}_{i}}: (f_{i})_{i\in I}\in \bigoplus_{i\in I}^{p}\mathcal{B}_{i}\right\}}_{(2)}
    \end{align*}
    Furthermore, if $(\mu^{*}_{i})_{i\in I}$ is a minimizer of (1), then $(f^{*}_{i})_{i\in I}:=(A_{i}\mu^{*}_{i})_{i\in I}$ is a minimizer of (2).
    \label{prop. reformulate optimization problem of direct sum space}
\end{proposition}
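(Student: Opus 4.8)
The plan is to reduce the claim to two applications of Proposition \ref{prop. reformulation of optimization problem}, one for each of the two RKBS triples in play, glued together by the identification $\mathcal{B}\underset{\mathcal{B}}{\equiv}\sum_{i\in I}^{p}\mathcal{B}_{i}$ that Proposition \ref{prop. compatibility} provides. The one structural fact I would extract from the proof of Proposition \ref{prop. compatibility} is the factorization $A=\mathcal{S}\circ\widetilde{(A_{i})_{i\in I}}$, where $\widetilde{(A_{i})_{i\in I}}\big((\mu_{i})_{i\in I}\big)=(A_{i}\mu_{i})_{i\in I}$; in particular $A\big((\mu_{i})_{i\in I}\big)=\mathcal{S}\big((A_{i}\mu_{i})_{i\in I}\big)$ for every $(\mu_{i})_{i\in I}\in\bigoplus_{i\in I}^{p}\Psi_{i}$.

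For the equality of the infima, I would apply Proposition \ref{prop. reformulation of optimization problem} to the RKBS triple $(\bigoplus_{i\in I}^{p}\Psi_{i},\psi,A)$ to rewrite the left-hand side (1) as $\inf\{\mathcal{R}_{L,P}(f)+\lVert f\rVert_{\mathcal{B}}:f\in\mathcal{B}\}$, and apply it to the RKBS triple $(\bigoplus_{i\in I}^{p}\mathcal{B}_{i},\mathbf{s},\mathcal{S})$ to rewrite the right-hand side (2) as $\inf\{\mathcal{R}_{L,P}(g)+\lVert g\rVert_{\sum_{i\in I}^{p}\mathcal{B}_{i}}:g\in\sum_{i\in I}^{p}\mathcal{B}_{i}\}$. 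Both uses are legitimate: Proposition \ref{prop. reformulation of optimization problem} only asks the RKBS to sit inside $\mathfrak{L}^{0}(\mathcal{X})$, which holds for $\mathcal{B}$ by hypothesis and hence for $\sum_{i\in I}^{p}\mathcal{B}_{i}$ because the two coincide as sets. Since $\mathcal{B}\underset{\mathcal{B}}{\equiv}\sum_{i\in I}^{p}\mathcal{B}_{i}$ — identical underlying function space and identical norm — the two resulting right-hand sides are literally the same minimization problem, so (1) equals (2).

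For the statement about minimizers, take a minimizer $(\mu_{i}^{*})_{i\in I}$ of (1) and set $(f_{i}^{*})_{i\in I}:=(A_{i}\mu_{i}^{*})_{i\in I}=\widetilde{(A_{i})_{i\in I}}\big((\mu_{i}^{*})_{i\in I}\big)$, which is a feasible point of (2) and satisfies $\mathcal{S}\big((f_{i}^{*})_{i\in I}\big)=A\big((\mu_{i}^{*})_{i\in I}\big)$ by the factorization. The componentwise estimate $\lVert A_{i}\mu_{i}^{*}\rVert_{\mathcal{B}_{i}}=\inf_{\nu\in A_{i}^{-1}(A_{i}\mu_{i}^{*})}\lVert\nu\rVert_{\Psi_{i}}\le\lVert\mu_{i}^{*}\rVert_{\Psi_{i}}$ gives $\lVert(f_{i}^{*})_{i\in I}\rVert_{\bigoplus_{i\in I}^{p}\mathcal{B}_{i}}\le\lVert(\mu_{i}^{*})_{i\in I}\rVert_{\bigoplus_{i\in I}^{p}\Psi_{i}}$, so the value of the objective of (2) at $(f_{i}^{*})_{i\in I}$ is at most $\mathcal{R}_{L,P}\big(A(\mu_{i}^{*})_{i\in I}\big)+\lVert(\mu_{i}^{*})_{i\in I}\rVert_{\bigoplus_{i\in I}^{p}\Psi_{i}}$, which equals the common value of (1) and (2) because $(\mu_{i}^{*})_{i\in I}$ minimizes (1). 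As this value cannot lie below the infimum (2), it equals it, i.e. $(f_{i}^{*})_{i\in I}$ is a minimizer of (2).

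The argument is mostly bookkeeping; the one point I would be careful about is that the two invocations of Proposition \ref{prop. reformulation of optimization problem} land on the \emph{same} hypothesis-space functional. This is precisely where the strength of Proposition \ref{prop. compatibility} is used — equality $\underset{\mathcal{B}}{\equiv}$ of $\mathcal{B}$ and $\sum_{i\in I}^{p}\mathcal{B}_{i}$ as sets and as normed spaces, not merely isometric isomorphism $\underset{\mathcal{B}}{\cong}$ — together with the factorization $A=\mathcal{S}\circ\widetilde{(A_{i})_{i\in I}}$ and the elementary bound $\lVert A_{i}\mu_{i}\rVert_{\mathcal{B}_{i}}\le\lVert\mu_{i}\rVert_{\Psi_{i}}$ that comes from the definition of the RKBS norm.
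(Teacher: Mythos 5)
Your proof is correct, and for the equality of the two infima it takes a genuinely different (and more economical) route than the paper. The paper proves $(1)=(2)$ by direct computation: it substitutes the norm identity \eqref{eq. compatiblity norm comparable} into $(2)$, rewrites the result as a nested infimum over $(f_{i})_{i\in I}$ and then over $(\mu_{i})_{i\in I}\in\widetilde{(A_{i})_{i\in I}}^{-1}((f_{i})_{i\in I})$, and flattens the double infimum using the surjectivity of $\widetilde{(A_{i})_{i\in I}}$ --- essentially re-running the argument of Proposition \ref{prop. reformulation of optimization problem} by hand in the direct-sum setting. You instead apply Proposition \ref{prop. reformulation of optimization problem} twice, once to each of the triples $(\bigoplus_{i\in I}^{p}\Psi_{i},\psi,A)$ and $(\bigoplus_{i\in I}^{p}\mathcal{B}_{i},\mathbf{s},\mathcal{S})$, and observe that the two resulting hypothesis-space problems are literally the same because $\mathcal{B}\underset{\mathcal{B}}{\equiv}\sum_{i\in I}^{p}\mathcal{B}_{i}$ gives equality of the underlying function sets and of the norms. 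This factors the proof cleanly through already-established results and isolates exactly where the strong form of compatibility (equality $\underset{\mathcal{B}}{\equiv}$ rather than mere isometric isomorphism $\underset{\mathcal{B}}{\cong}$) is needed; the paper's computation, by contrast, produces along the way the intermediate identity it reuses in the minimizer step. For the minimizer claim your argument is essentially the paper's: both rest on the factorization $A=\mathcal{S}\circ\widetilde{(A_{i})_{i\in I}}$ from the proof of Proposition \ref{prop. compatibility} and on the componentwise bound $\lVert A_{i}\mu_{i}^{*}\rVert_{\mathcal{B}_{i}}\le\lVert\mu_{i}^{*}\rVert_{\Psi_{i}}$, which yields $\lVert(A_{i}\mu_{i}^{*})_{i\in I}\rVert_{\bigoplus_{i\in I}^{p}\mathcal{B}_{i}}\le\lVert(\mu_{i}^{*})_{i\in I}\rVert_{\bigoplus_{i\in I}^{p}\Psi_{i}}$ and traps the objective of $(2)$ at $(f_{i}^{*})_{i\in I}$ between the common optimal value and the infimum of $(2)$. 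Your explicit check that $(A_{i}\mu_{i}^{*})_{i\in I}$ is a feasible point of $(2)$ is a small point the paper leaves implicit.
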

\begin{proof}
    First, by \eqref{eq. compatiblity norm comparable}, we have
    \begin{align*}
        \lVert (f_{i})_{i\in I}\rVert_{\bigoplus_{i\in I}^{p}\mathcal{B}_{i}}^{p} = \sum_{i\in I}\inf\left\{\lVert \mu_{i}\rVert_{\Psi_{i}}^{p}: \mu_{i} \in A_{i}^{-1}(f_{i})\right\}
        = \inf\left\{ \lVert (\mu_{i})_{i\in I}\rVert_{\bigoplus_{i\in I}^{p}\Psi_{i}}^{p} : (\mu_{i})_{i\in I} \in \widetilde{(A_{i})_{i\in I}}^{-1}((f_{i})_{i\in I})  \right\}.
    \end{align*}
    Thus, the optimization problem (2) can be expressed as follows:
    \begin{align*}
        (2)
        &=
        \inf\left\{\mathcal{R}_{L,P}(\mathcal{S}(f_{i})_{i\in I})+\lVert (f_{i})_{i\in I}\rVert_{\bigoplus_{i\in I}^{p}\mathcal{B}_{i}}: (f_{i})_{i\in I}\in \bigoplus_{i\in I}^{p}\mathcal{B}_{i}\right\}
        \\
        &=
        \inf\left\{\mathcal{R}_{L,P}(\mathcal{S}(f_{i})_{i\in I})+
        \inf\left\{ \lVert (\mu_{i})_{i\in I}\rVert_{\bigoplus_{i\in I}^{p}\Psi_{i}} : (\mu_{i})_{i\in I} \in \widetilde{(A_{i})_{i\in I}}^{-1}((f_{i})_{i\in I})  \right\}
        : (f_{i})_{i\in I}\in \bigoplus_{i\in I}^{p}\mathcal{B}_{i}\right\}
        \\
        &=
        \inf\left\{
        \inf\left\{ \mathcal{R}_{L,P}(\mathcal{S}(f_{i})_{i\in I})
        +
        \lVert (\mu_{i})_{i\in I}\rVert_{\bigoplus_{i\in I}^{p}\Psi_{i}} : (\mu_{i})_{i\in I} \in \widetilde{(A_{i})_{i\in I}}^{-1}((f_{i})_{i\in I})  \right\}
        : (f_{i})_{i\in I}\in \bigoplus_{i\in I}^{p}\mathcal{B}_{i}
        \right\}
        \\
        &=
        \inf\left\{
        \inf\left\{ \mathcal{R}_{L,P}\left(\mathcal{S}\left(\widetilde{(A_{i})_{i\in I}}(\mu_{i})_{i\in I}\right)\right)
        +
        \lVert (\mu_{i})_{i\in I}\rVert_{\bigoplus_{i\in I}^{p}\Psi_{i}} : (\mu_{i})_{i\in I} \in \widetilde{(A_{i})_{i\in I}}^{-1}((f_{i})_{i\in I})  \right\}
        : (f_{i})_{i\in I}\in \bigoplus_{i\in I}^{p}\mathcal{B}_{i}
        \right\}
    \end{align*}
    Since $\widetilde{(A_{i})_{i\in I}}$ is surjective, we have
    \begin{align*}
        \bigcup_{(f_{i})_{i\in I} \in \bigoplus_{i\in I}^{p}\mathcal{B}_{i}}\widetilde{(A_{i})_{i\in I}}^{-1}((f_{i})_{i\in I}) = \bigoplus_{i\in I}^{p}\Psi_{i}.
    \end{align*}
    For convenience, define for each $(f_{i})_{i\in I} \in \bigoplus_{i\in I}^{p}\mathcal{B}_{i}$
    \begin{align*}
        C_{\widetilde{(A_{i})_{i\in I}}^{-1}((f_{i})_{i\in I})}
        :=
        \left\{ \mathcal{R}_{L,P}\left(\mathcal{S}\left(\widetilde{(A_{i})_{i\in I}}(\mu_{i})_{i\in I}\right)\right)
        +
        \lVert (\mu_{i})_{i\in I}\rVert_{\bigoplus_{i\in I}^{p}\Psi_{i}} : (\mu_{i})_{i\in I} \in \widetilde{(A_{i})_{i\in I}}^{-1}((f_{i})_{i\in I})  \right\},
    \end{align*}
    so that
    \begin{align*}
        \underset{(f_{i})_{i\in I} \in \bigoplus_{i\in I}^{p}\mathcal{B}_{i}
        }{\bigcup}C_{\widetilde{(A_{i})_{i\in I}}^{-1}((f_{i})_{i\in I})}
        =
        \left\{ \mathcal{R}_{L,P}\left(\mathcal{S}\left(\widetilde{(A_{i})_{i\in I}}(\mu_{i})_{i\in I}\right)\right)
        +
        \lVert (\mu_{i})_{i\in I}\rVert_{\bigoplus_{i\in I}^{p}\Psi_{i}} : (\mu_{i})_{i\in I} \in \bigoplus_{i\in I}^{p}\Psi_{i}  \right\}.
    \end{align*}
    Therefore, we have
    \begin{align*}
        (2) = \inf\left\{ \inf C_{\widetilde{(A_{i})_{i\in I}}^{-1}((f_{i})_{i\in I})} : (f_{i})_{i\in I} \in \bigoplus_{i\in I}^{p}\mathcal{B}_{i} \right\}
        = \inf \bigcup_{(f_{i})_{i\in I} \in \bigoplus_{i\in I}^{p}\mathcal{B}_{i}}C_{\widetilde{(A_{i})_{i\in I}}^{-1}((f_{i})_{i\in I})} = (1).
    \end{align*}
    Let $(\mu_{i}^{*})_{i\in I}$ be a minimizer of (1). Then, for each $(\mu_{i})_{i\in I} \in \bigoplus_{i \in I }^{p}\Psi_{i}$, we have
    \begin{align*}
        \mathcal{R}_{L,P}(A(\mu_{i}^{*})_{i\in I})+\lVert (\mu_{i}^{*})_{i\in I}\rVert_{\bigoplus_{i\in I}^{p}\Psi_{i}}
        \le
        \mathcal{R}_{L,P}(A(\mu_{i})_{i\in I})+\lVert (\mu_{i})_{i\in I}\rVert_{\bigoplus_{i\in I}^{p}\Psi_{i}}.
    \end{align*}
    Furthermore, for any $(f_{i})_{i\in I} \in \bigoplus_{i\in I}^{p}\mathcal{B}_{i}$, we obtain:
    \begin{align*}
        \mathcal{R}_{L,P}(A(\mu_{i}^{*})_{i\in I})+\lVert (\mu_{i}^{*})_{i\in I}\rVert_{\bigoplus_{i\in I}^{p}\Psi_{i}}
        &\le
        \inf\left\{
        \mathcal{R}_{L,P}(A(\nu_{i})_{i\in I})+\lVert (\nu_{i})_{i\in I}\rVert_{\bigoplus_{i\in I}^{p}\Psi_{i}}: (\nu_{i})_{i\in I}  \in \widetilde{(A_{i})_{i\in I}}^{-1}((f_{i})_{i\in I}) \right\}
        \\
        &=
        \mathcal{R}_{L,P}(\mathcal{S}(f_{i})_{i\in I})
        +
        \inf\left\{ \lVert (\nu_{i})_{i\in I}\rVert_{\bigoplus_{i\in I }^{p}\Psi_{i}}: (\nu_{i})_{i\in I}  \in \widetilde{(A_{i})_{i\in I}}^{-1}((f_{i})_{i\in I}) \right\}
        \\
        &=
        \mathcal{R}_{L,P}(\mathcal{S}(f_{i})_{i\in I})
        +
        \lVert (f_{i})_{i\in I}\rVert_{\bigoplus_{i\in I}^{p}\mathcal{B}_{i}}
    \end{align*}
    where the last equality follows from \eqref{eq. compatiblity norm comparable}.
    Finally, by the above inequality we conclude that:
    \begin{align*}
        \mathcal{R}_{L,P}(\mathcal{S}(A_{i}\mu_{i}^{*})_{i\in I}) + \lVert (A_{i}\mu_{i}^{*})_{i\in I}\rVert_{\bigoplus_{i\in I}^{p}\mathcal{B}_{i}}
        &\le
        \mathcal{R}_{L,P}(\mathcal{S}(A_{i}\mu_{i}^{*})_{i\in I}) + \lVert (\mu_{i}^{*})_{i\in I}\rVert_{\bigoplus_{i\in I}^{p}\Psi_{i}}
        \\
        &=
        \mathcal{R}_{L,P}(A(\mu_{i}^{*})_{i\in I})+\lVert (\mu_{i}^{*})_{i\in I}\rVert_{\bigoplus_{i\in I}^{p}\Psi_{i}}.
    \end{align*}
\end{proof}
\subsection{Representer theorem of Integral RKBS revisit} 
We aim to understand Theorem 3.9 (the representer theorem for integral RKBSs) from paper \citet{bartolucci2023understanding} using the decomposition methodology introduced in this work. While the previous chapter developed the theory for general L-risk, this chapter focuses exclusively on empirical L-risk. This is because, to the best of our knowledge, except for the case of reflexive Banach spaces considered in paper \citet{combettes2018regularized}, no representer theorem has been developed for general L-risk. For convenience, we restrict our consideration to the following loss function $L:\mathcal{Y}\times \mathbb{R}\rightarrow \mathbb{R}$ instead of the previously defined loss. The following theorem follows directly from Theorem 3.9 in paper \citet{bartolucci2023understanding} and Proposition \ref{prop. multiple kernel method contained in integral RKBS}.

\begin{theorem}[Theorem 3.9 in \cite{bartolucci2023understanding}]
    Assume that the loss function $L(y,\cdot)$ is convex and coercive in the second entry. Then, the problem 
    \begin{align*}
        \inf\left\{\mathcal{R}_{L,D}(f)+\lambda\lVert f\rVert_{\mathcal{F}_{\sigma}(\mathcal{X},\Omega)}: f\in \mathcal{F}_{\sigma}(\mathcal{X},\Omega)\right\}
    \end{align*}
    admits a solution $f_{D,\lambda}$, such that, for all $x\in \mathcal{X}$ $f_{D,\lambda}(x) = \sum_{j=1}^{K}a_{j}\sigma(x,w_{j})$, $a_{j}\in \mathbb{R}\setminus{\{0\}}, w_{j} \in \Omega$.
    \\
    Furthermore, there is a discretized problem
    
    \begin{align*}
        \inf\left\{\mathcal{R}_{L,D}(f)+\lambda\lVert f\rVert_{\mathcal{F}_{\sigma}(\mathcal{X},\Omega)}: f\in \mathcal{F}_{\sigma}(\mathcal{X},\Omega)\right\}
        =
        \underbrace{\inf\left\{\mathcal{R}_{L,D}(f)+\lambda\lVert f\rVert_{\sum_{j\in [K]}\mathcal{L}_{\sigma}^{1}(\delta_{w_{j}})}: f\in \sum_{j\in [K]}\mathcal{L}_{\sigma}^{1}(\delta_{w_{j}})\right\}}_{(*)}
    \end{align*}
    which admits the same minimizer. That is, the problem $(*)$ has a minimizer $f^{*}$, which satisfies $f^{*} = f_{D,\lambda}$.
    \label{thm. Representer theorem of integral RKBS}
\end{theorem}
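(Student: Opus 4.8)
The plan is to take the solution $f_{D,\lambda}$ furnished by the representer theorem of \citet{bartolucci2023understanding} and transport it, \emph{with matching norm}, into the finite sum $\mathcal{G}_K:=\sum_{j\in[K]}\mathcal{L}_{\sigma}^{1}(\delta_{w_j})$, using the constructions of Sections 3--5. First I would invoke Theorem 3.9 of \citet{bartolucci2023understanding}: under the stated convexity and coercivity of $L(y,\cdot)$, the problem $\inf\{\mathcal{R}_{L,D}(f)+\lambda\lVert f\rVert_{\mathcal{F}_{\sigma}(\mathcal{X},\Omega)}:f\in\mathcal{F}_{\sigma}(\mathcal{X},\Omega)\}$ admits a minimizer $f_{D,\lambda}$, produced by pushing down a sparse minimizer $\mu^{*}=\sum_{j=1}^{K}a_{j}\delta_{w_{j}}$ (with $a_{j}\neq 0$ and, combining coincident points and discarding zero coefficients, the $w_{j}$ pairwise distinct) of the lifted problem $\inf\{\mathcal{R}_{L,D}(A\mu)+\lambda\lVert\mu\rVert_{\mathcal{M}(\Omega)}:\mu\in\mathcal{M}(\Omega)\}$, so that $f_{D,\lambda}=A\mu^{*}$, i.e.\ $f_{D,\lambda}(x)=\sum_{j=1}^{K}a_{j}\sigma(x,w_{j})$ for all $x\in\mathcal{X}$. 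This already gives the first assertion.

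The one nonroutine point of the argument is the norm identity $\lVert f_{D,\lambda}\rVert_{\mathcal{F}_{\sigma}(\mathcal{X},\Omega)}=\sum_{j=1}^{K}|a_{j}|$. Since $\mathcal{F}_{\sigma}(\mathcal{X},\Omega)\subset C(\mathcal{X})\subset\mathfrak{L}^{0}(\mathcal{X})$, Proposition \ref{prop. reformulation of optimization problem} — applied with feature space $\mathcal{M}(\Omega)$, RKBS map $A$, and the empirical distribution $D$ in place of $P$ — shows that the lifted and unlifted infima above coincide. As $f_{D,\lambda}$ attains the unlifted infimum and $\mu^{*}$ attains the lifted one, while $\mathcal{R}_{L,D}(A\mu^{*})=\mathcal{R}_{L,D}(f_{D,\lambda})$ is finite and $\lVert\mu^{*}\rVert_{\mathcal{M}(\Omega)}=\sum_{j}|a_{j}|$ (the $w_{j}$ being distinct), cancelling the risk term yields the identity; equivalently, any $A$-preimage of $f_{D,\lambda}$ of strictly smaller total variation would strictly decrease the lifted objective, contradicting minimality of $\mu^{*}$.

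Next I would record two elementary facts about $\mathcal{G}_K=\sum_{j\in[K]}\mathcal{L}_{\sigma}^{1}(\delta_{w_j})$, which is well defined by Proposition \ref{prop. infinite sum of RKBSs} since $[K]$ is finite and $\{\delta_{w_j}\}_{j\in[K]}$ is a singular family in $P(\Omega)$. (i) $\mathcal{G}_K\subset\mathcal{F}_{\sigma}(\mathcal{X},\Omega)$ with $\lVert f\rVert_{\mathcal{F}_{\sigma}(\mathcal{X},\Omega)}\le\lVert f\rVert_{\mathcal{G}_K}$ for all $f\in\mathcal{G}_K$ — the $p=1$ analogue of Proposition \ref{prop. multiple kernel method contained in integral RKBS}; concretely, each $f\in\mathcal{G}_K$ is of the form $\sum_{j}c_{j}\sigma(\cdot,w_{j})=A\bigl(\sum_{j}c_{j}\delta_{w_{j}}\bigr)$ with $\lVert\sum_{j}c_{j}\delta_{w_{j}}\rVert_{\mathcal{M}(\Omega)}=\sum_{j}|c_{j}|$, and taking the infimum over such decompositions gives the inequality. (ii) $f_{D,\lambda}\in\mathcal{G}_K$ with $\lVert f_{D,\lambda}\rVert_{\mathcal{G}_K}\le\sum_{j}|a_{j}|$: indeed $f_{j}:=a_{j}\sigma(\cdot,w_{j})$ equals $A_{j}h$ for the constant function $h\equiv a_{j}\in L^{1}(\delta_{w_j})$, hence $f_{j}\in\mathcal{L}_{\sigma}^{1}(\delta_{w_j})$ with $\lVert f_{j}\rVert_{\mathcal{L}_{\sigma}^{1}(\delta_{w_j})}\le|a_{j}|$, and $(f_{j})_{j\in[K]}\in\bigoplus_{j\in[K]}^{1}\mathcal{L}_{\sigma}^{1}(\delta_{w_j})$ automatically. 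Combining (i), (ii) and the identity of the previous paragraph forces $\lVert f_{D,\lambda}\rVert_{\mathcal{G}_K}=\lVert f_{D,\lambda}\rVert_{\mathcal{F}_{\sigma}(\mathcal{X},\Omega)}$.

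Finally I would chain inequalities: by (i), $\inf\{\mathcal{R}_{L,D}(f)+\lambda\lVert f\rVert_{\mathcal{F}_{\sigma}(\mathcal{X},\Omega)}:f\in\mathcal{F}_{\sigma}(\mathcal{X},\Omega)\}\le\inf\{\mathcal{R}_{L,D}(f)+\lambda\lVert f\rVert_{\mathcal{G}_K}:f\in\mathcal{G}_K\}$, while the left-hand infimum equals $\mathcal{R}_{L,D}(f_{D,\lambda})+\lambda\lVert f_{D,\lambda}\rVert_{\mathcal{F}_{\sigma}(\mathcal{X},\Omega)}=\mathcal{R}_{L,D}(f_{D,\lambda})+\lambda\lVert f_{D,\lambda}\rVert_{\mathcal{G}_K}\ge\inf\{\mathcal{R}_{L,D}(f)+\lambda\lVert f\rVert_{\mathcal{G}_K}:f\in\mathcal{G}_K\}$; hence both infima are equal and, since $f_{D,\lambda}\in\mathcal{G}_K$ realizes the right-hand one, $f^{*}:=f_{D,\lambda}$ is a minimizer of $(*)$. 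The hard part is exactly the norm identity $\lVert f_{D,\lambda}\rVert_{\mathcal{F}_{\sigma}(\mathcal{X},\Omega)}=\sum_{j}|a_{j}|$: absent it one has only the two one-sided bounds $\lVert f_{D,\lambda}\rVert_{\mathcal{G}_K}\le\sum_{j}|a_{j}|$ and $\lVert f_{D,\lambda}\rVert_{\mathcal{F}_{\sigma}(\mathcal{X},\Omega)}\le\sum_{j}|a_{j}|$, which do not suffice to match the two norms and close the chain; this is precisely where the equivalence of the lifted and unlifted problems (Proposition \ref{prop. reformulation of optimization problem}), equivalently the minimality of the sparse measure $\mu^{*}$, is indispensable. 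Everything else is bookkeeping with the definitions of the sum of RKBSs and of $\mathcal{L}_{\sigma}^{1}(\delta_{w_j})$.
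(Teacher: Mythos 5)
Your proof is correct, but it takes a genuinely different route from the paper's. The paper has no self-contained proof: it asserts that the theorem "follows directly" from Theorem 3.9 of \citet{bartolucci2023understanding} together with Proposition~\ref{prop. multiple kernel method contained in integral RKBS}, and then justifies the discretization by running the sparse minimizer $\sum_{j}a_{j}\delta_{w_{j}}$ through the full decomposition machinery: it embeds $\{\delta_{w_{j}}\}_{j\in[K]}$ in a maximal singular family, passes from $\mathcal{M}(\Omega)$ to $\bigoplus_{i\in I}^{1}L^{1}(\mu_{i})$ via $\Theta$ and Lemma~\ref{lem. isometric v.s. have same minimizer}, then to $\bigoplus_{i\in I}^{1}\mathcal{L}_{\sigma}^{1}(\mu_{i})$ via Proposition~\ref{prop. reformulate optimization problem of direct sum space}, and finally restricts to the finite sub-sum over $[K]$, tracking the minimizer at each stage. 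You bypass the infinite-sum apparatus entirely: you work directly with the finite sum $\mathcal{G}_{K}=\sum_{j\in[K]}\mathcal{L}_{\sigma}^{1}(\delta_{w_{j}})$, prove the inclusion $\mathcal{G}_{K}\subset\mathcal{F}_{\sigma}(\mathcal{X},\Omega)$ with $\lVert\cdot\rVert_{\mathcal{F}_{\sigma}(\mathcal{X},\Omega)}\le\lVert\cdot\rVert_{\mathcal{G}_{K}}$ by hand, and close the argument with the norm identity $\lVert f_{D,\lambda}\rVert_{\mathcal{F}_{\sigma}(\mathcal{X},\Omega)}=\sum_{j}\lvert a_{j}\rvert$, extracted from the minimality of the sparse measure via Proposition~\ref{prop. reformulation of optimization problem}. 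That identity is exactly the analytic fact that the paper's chain of equalities uses implicitly (it is what lets the displayed infima agree when the tracked minimizer is substituted in), and you are right that the two one-sided bounds alone would not suffice; making it explicit is a genuine improvement in rigor over the paper's presentation. What the paper's route buys is a demonstration of the decomposition/compatibility machinery of Sections 4--5, which is the expository point of that subsection; what your route buys is a shorter, self-contained argument that needs only Theorem 3.9 of \citet{bartolucci2023understanding}, Proposition~\ref{prop. reformulation of optimization problem}, and elementary computations with Dirac measures. The only minor points to watch are the harmless mismatch between the regularization parameter $\lambda$ here and the unweighted norm in Proposition~\ref{prop. reformulation of optimization problem} (a trivial rescaling), and your tacit normalization that the $w_{j}$ are pairwise distinct with $a_{j}\neq0$, which you do flag.
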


The following equations demonstrate how the optimization problem in Theorem \ref{thm. Representer theorem of integral RKBS} is transformed into a discretized problem using Proposition \ref{prop. reformulation of optimization problem}, Proposition \ref{prop. reformulate optimization problem of direct sum space}, and Lemma \ref{lem. isometric v.s. have same minimizer}. Furthermore, since we know that $\inf\left\{\mathcal{R}_{L,D}(A\mu) + \lVert \mu\rVert_{\mathcal{M}(\Omega)} : \mu \in \mathcal{M}(\Omega) \right\}$ has $\sum_{j \in [K]}a_{j}\delta_{w_{j}}$ as its solution (see Theorem 3.3 of \cite{bredies2020sparsity} and Theorem 3.9 in \cite{bartolucci2023understanding}), we can observe how the solutions evolve as the initial optimization problem in the measure space (the feature space of integral RKBS) is reformulated into an optimization problem over the finite direct sum of RKBSs.

\begin{align*}
    &\phantom{=}\underbrace{\inf\left\{\mathcal{R}_{L,D}(A\mu) + \lVert \mu\rVert_{\mathcal{M}(\Omega)} : \mu \in \mathcal{M}(\Omega) \right\}}_{\sum_{j \in [K]}a_{j}\delta_{w_{j}}}
    \\
    &=
    \underbrace{\inf\left\{\mathcal{R}_{L,D}(A\Theta(g_{i})_{i\in I}) + \lVert (g_{i})_{i\in I}\rVert_{\bigoplus_{i\in I }^{1}L^{1}(\mu_{i})} : (g_{i})_{i\in I} \in \bigoplus_{i\in I }^{1}L^{1}(\mu_{i})\right\}}_{(a_{1},\dots,a_{K},0,\dots)}
    \\
    &=
    \underbrace{
    \inf\left\{\mathcal{R}_{L,D}(\mathcal{S}(f_{i})_{i\in I}) + \lVert (f_{i})_{i\in I}\rVert_{\bigoplus_{i\in I }^{1}\mathcal{L}^{1}(\mu_{i})} : (f_{i})_{i\in I} \in \bigoplus_{i\in I }^{1}\mathcal{L}^{1}(\mu_{i})\right\}}_{(a_{1}\sigma(\cdot,w_{1}),\dots, a_{K}\sigma(\cdot,w_{K}),0,\dots)}
\end{align*}

\begin{align*}    
    &=
    \underbrace{
    \inf\left\{\mathcal{R}_{L,D}\left(\sum_{j=1}^{K}f_{j}\right) + \lVert (f_{j})_{j\in [K]}\rVert_{\bigoplus_{j\in [K] }^{1}\mathcal{L}^{1}(\delta_{w_{j}})} : (f_{j})_{j\in [K]} \in \bigoplus_{j\in [K] }^{1}\mathcal{L}^{1}(\delta_{w_{j}})\right\}}_{(a_{1}\sigma(\cdot,w_{1}),\dots, a_{K}\sigma(\cdot,w_{K}))}
    \\
    &=\underbrace{
    \inf\left\{\mathcal{R}_{L,D}\left(\sum_{j=1}^{K}f_{j}\right) + \lVert \sum_{j\in [K]} f_{j} \rVert_{\sum_{j\in [K] }^{1}\mathcal{L}^{1}(\delta_{w_{j}})} : \sum_{j\in [K]} f_{j} \in \sum_{j\in [K] }\mathcal{L}^{1}(\delta_{w_{j}})\right\}}_{\sum_{j\in [K]}a_{j}\sigma(\cdot,w_{j})}
\end{align*}

\section{Conclusion and Future Work}
We showed that there is a compatibility property between the direct sum of feature spaces and the sum of RKBSs. By using this, we can decompose a class of integral RKBS $\mathcal{F}_{\sigma}(\mathcal{X},\Omega)$ into the sum of p-norm RKBSs $\{\mathcal{L}_{\sigma}^{1}(\mu_{i})\}_{i\in I}$. The advantage of this analytical method is that it allows for a more structural understanding of the RKBS class through an appropriate decomposition approach. In Section 5, we partially explained these advantages by comparing the integral RKBS class with the previously known sum of RKHSs. We also discussed, from an optimization perspective, how an optimization problem in the hypothesis space can be reformulated as an optimization problem in the feature space including the cases where the hypothesis space is given by a direct sum or a sum of RKBSs. 
Additionally, through these insights, we expect that it would be helpful in designing multiple kernel learning algorithms for the RKBS class.

\section*{Acknowledgements}
This work is supported by the National Research Foundation of Korea(NRF) grant funded by the Korea government(MSIT) (No.RS-2024-00421203, 2021R1A2C3010887).
\bibliographystyle{plainnat}
\bibliography{references}

\end{document}